\tikzset{individu/.style={draw,thick}}
\theoremstyle{plain}
\newtheorem{theorem}{Theorem}[section]
\newtheorem{corollary}[theorem]{Corollary}
\newtheorem{lemma}[theorem]{Lemma}
\newtheorem{proposition}[theorem]{Proposition}
\theoremstyle{definition}
\newtheorem{definition}[theorem]{Definition}
\theoremstyle{remark}
\newtheorem{remark}[theorem]{Remark}
\numberwithin{equation}{section}
\newcommand{\N}{\mathbb{N}}
\newcommand{\Z}{\mathbb{Z}}
\newcommand{\R}{\mathbb{R}}
\newcommand{\C}{\mathbb{C}}
\newcommand{\calC}{\mathcal{C}}
\newcommand{\U}{\mathbb{U}}
\newcommand{\T}{\mathbf{T}}
\newcommand{\ind}[1]{\mathbf{1}_{\left\{#1\right\}}}
\newcommand{\ceil}[1]{{\left\lceil #1 \right\rceil}}
\newcommand{\crochet}[1]{{\langle #1 \rangle}}
\renewcommand{\bar}[1]{\overline{#1}}
\renewcommand{\tilde}[1]{\widetilde{#1}}
\renewcommand{\hat}[1]{\widehat{#1}}\newcommand{\ex}{\mathbf{e}}
\newcommand{\e}{\mathrm{e}}
\newcommand{\dd}{\mathrm{d}}
\newcommand{\egaldistr}{{\overset{(d)}{=}}}
\DeclareMathOperator{\E}{\mathbb{E}}
\renewcommand{\P}{\mathbb{P}}
\newcommand{\calF}{\mathcal{F}}
\newcommand{\x}{\mathbf{x}}
\renewcommand{\rho}{\varrho}
\renewcommand{\epsilon}{\varepsilon}
\title{Infinitely ramified point measures\\ and branching L\'evy processes}
\author{Jean Bertoin\footnote{Institut f\"ur Mathematik, Universit\"at Z\"urich, Switzerland.}  \and Bastien Mallein${}^*$}
\date{\today}
\begin{document}

\maketitle

\begin{abstract}
We call a random point measure \textit{infinitely ramified} if for every $n\in \N$, it has the same distribution as the $n$-th generation of some branching random walk. On the other hand, \textit{branching L\'evy processes} model the evolution of a population in continuous time, such that individuals move in space independently, according to some L\'evy process, and further beget progenies according to some Poissonian dynamics, possibly on an everywhere dense set of times. Our main result connects these two classes of processes much in the same way as in the case of infinitely divisible distributions and L\'evy processes: the value at time $1$ of a branching L\'evy process is an infinitely ramified point measure, and conversely, any infinitely ramified point measure can be obtained as the value at time $1$ of some branching L\'evy process.
\end{abstract}

\section{Introduction}
The classical works of L\'evy, Khintchin, Kolmogorov and It\^o have unveiled the fine structure of infinitely divisible distributions on $\R^d$, and their connections with processes with independent and stationary increments. In short, the purpose of this work is to develop an analogous theory in the setting of point measures and branching processes. Let us start by recalling some of the well-known connections between infinitely divisible laws, random walks and L\'evy processes. The formulation is tailored to fit our purposes; we also refer to 
 \cite{Sato} for a textbook treatment of this topic.
 
For the sake of simplicity, we focus on the dimension $d=1$ and work with $[-\infty, \infty)$ as state space, where the boundary point $-\infty$ serves as cemetery state. We call a discrete time process $(S_n: n\in \Z_+)$ with values in $[-\infty, \infty)$ and started from $S_0=0$ a {\em random walk}, if for every integers $n,k\geq 0$, one can express $S_{n+k}$ in the form $S_{n+k}=S_n+S'_k$, where $S'_k$ has the same law as $S_k$ and is further independent of $\sigma(S_i: 0\leq i \leq n)$. A distribution on $[-\infty, \infty)$, say $\rho$, is then called {\em infinitely divisible} if for every $n\in\N$, there is a random walk such that $S_n$ has the law $\rho$.

This naturally leads us to consider random walks indexed by dyadic rational times. Specifically, we write
\[ D=\left\{k 2^{-n}: n,k\in\Z_+\right\} \]
for the set of dyadic rational times and consider a process $(\xi_t: t\in D)$ such that for every $s,t\in D$, one has $\xi_{t+s}=\xi_t+\xi'_s$, where $\xi'_s$ has the same law as $\xi_s$ and is independent of $\sigma(\xi_r: r\in D, r \leq t)$. Equivalently, the \textit{discrete time skeletons} (i.e. the processes $(\xi_{k2^{-n}}:k \in \Z_+)$ for all integers $n$) are random walks. Excluding implicitly the degenerate case when $\xi_1= -\infty$ a.s., $\xi_1$ then has an infinitely divisible distribution, and conversely, any infinitely divisible distribution on $[-\infty, \infty)$ which is not degenerate (i.e. not the Dirac mass at $-\infty$) can be obtained in this setting.
Further, one can extend the process $\xi$ to non-negative real times and get a process $(\xi_t: t\in \R_+)$ with c\`adl\`ag paths a.s. The latter is a {\em L\'evy process}, possibly killed at some constant rate, and its structure is described by the celebrated L\'evy-It\^o decomposition. This identifies the continuous part of $\xi$ as a Brownian motion with constant drift and its jumps as a Poisson point process whose intensity is determined by the so-called L\'evy measure of $\xi$.

We next turn our attention to random point measures, and first introduce some notation in this setting. We write $\mathcal{P}$ for the space of point measures on~$\R$ that assign a finite mass to semi-infinite intervals. Specifically, $\mu\in{\mathcal P}$ if and only if $\mu$ is a measure on $\R$ with integer-valued tail:
\[ \bar{\mu}(x)\coloneqq \mu((x,\infty))\in \Z_+\quad \text{ for every }x\in \R. \]
Repeating the atoms of $\mu$ according to their multiplicity and ranking them in the non-increasing order yields a sequence $\x=(x_n: n\in\N)$, called the \textit{ranked sequence of atoms} of $\mu$, where we set $x_n=-\infty$ for $n>\mu(\R)$. 
 We shall therefore often identify $\mu$ with the ranked sequence of its atoms, and thus $\mathcal{P}$ with the space of non-increasing sequences $\x$ in $[-\infty, \infty)$ with $\lim_{n\to \infty}x_n=-\infty$. That is, we shall use indifferently the notation $\mu$ or $\x$, in the sense that then
\[ \mu = \sum_{n=1}^{\infty} \delta_{x_n}, \]
with the convention that the possible atoms at $-\infty$ are discarded, i.e. $\delta_{-\infty}=0$. In particular, the zero measure is identified with the sequence $\varnothing\coloneqq (-\infty, \ldots)$. We further denote by $\tau$ the translation operator on ${\mathcal P}$, setting \[ \tau_y \x=\x+y=(x_n+y: n\in\N)\]
for every $y\in[-\infty, \infty)$, and equivalently
\[ \tau_y \mu = \sum_{n \geq 1} \delta_{x_n + y}. \]
 Observe that
for $y=-\infty$, by our convention, $\tau_{-\infty} \mu = \varnothing$ for any $\mu\in{\mathcal P}$.

A {\em branching random walk} is a process in discrete time $(Z_n: n\in \Z_+)$ with values in $\mathcal{P}$ and $Z_0=\delta_0$ a.s., such that for every $n,k\geq 0$, the point measure $Z_{n+k}$ can be expressed in the form 
\[ Z_{n+k} = \sum_{i=1}^{\infty} \tau_{x_i} Z^i_k,\quad \text{ where }\x=Z_n, \]
with $(Z^i_k: i\in\N)$ i.i.d. copies of $Z_k$ which are independent of $\sigma(Z_j: 0\leq j \leq n)$. This condition is  referred to  as the (simple) branching property. 

In order to ensure that the number of particles in semi-infinite intervals never explodes in finite time, that is $Z_n \in \mathcal{P}$ a.s. for all $n\geq 1$,  one usually  further requests non-degeneracy of the Laplace transform of the intensity. Namely, one assumes that
\begin{equation} \label{eqn:integrabilityInfinitelyRamified} 
\text{there exists 
$\theta \geq 0$ such that $0<\E(\crochet{Z_1,\ex_\theta})<\infty$,}
\end{equation}
with  the notation 
\[\crochet{\mu,f} = \int_{\R} f \dd \mu = \sum_{i \in \N} f(x_i)\quad \text{ and } \quad \ex_\theta: y \in \R \mapsto \e^{\theta y}, \]
and the convention that $f(-\infty)=0$. The requirement that $\theta\geq 0$ in \eqref{eqn:integrabilityInfinitelyRamified} is of course just a matter of convenience, since the case $\theta<0$ follows  by reflexion with obvious modifications. There exist non-exploding branching random walks for which \eqref{eqn:integrabilityInfinitelyRamified} fails; however most works in that field rely on such non-degeneracy assumption. The condition $\E(\crochet{Z_1,\ex_\theta})>0$ is equivalent to $Z_1 \neq \varnothing$ with positive probability, which is simply a non-degeneracy assumption.

Our first object of interest in the present work is the family of {\em infinitely ramified point measures}, formed by the 
random point measures ${\mathcal Z}$ which have the property that for every $n\in\N$, 
${\mathcal Z}$ has the same distribution as 
the $n$-th generation of some branching random walk. Throughout  this article, we restrict our attention to random point measures satisfying \eqref{eqn:integrabilityInfinitelyRamified}.

Our second object of interest is the family of {\it branching L\'evy processes} that we introduce only informally here, postponing the rigorous construction to Section~\ref{sec:branchingLevyProcess}. Loosely speaking, a branching L\'evy process is a particle system in real time, starting from a single particle located at $0$, where particles evolve in $[-\infty, \infty)$ independently and according to some (possibly killed) L\'evy process. They further beget children in a Poissonian manner, where the location of birth of each child is given by some random shift of the location of its parent at the time of the birth event. Branching Brownian motions with constant drift (see, for instance, Chapter 5 in Bovier \cite{Bovier}) form the class of branching L\'evy processes with continuous ancestral trajectories. Some fairly general instances of branching L\'evy processes with discontinuous ancestral trajectories have appeared the study of so-called homogeneous fragmentations \cite{Be-Rou} and compensated fragmentation \cite{BeCF}.

Similarly to Lévy processes, the distribution of a branching L\'evy process is characterized by a triple $(\sigma^2, a, \Lambda)$, where
\begin{itemize}
 \item $\sigma^2\geq 0$ is the variance of the Brownian component of the motion of typical individuals;
 \item $a \in \R$ is the drift coefficient of that motion;
 \item $\Lambda$ is a sigma-finite measure on ${\mathcal P}$, referred to as the L\'evy measure (of the branching L\'evy process).
\end{itemize}
Further, the L\'evy measure $\Lambda$ has to satisfy certain requirements that are better understood if we view a ranked sequence $\x=(x_n: n\in\N)$ in ${\mathcal P}$ as a pair $\x=(x_1, \x_2)$, where $\x_2\coloneqq (x_{n+1}: n\in\N)$. 
Recall also that $\varnothing=(-\infty, \ldots)$, so $(0,\varnothing)=(0, -\infty, \dots)$ is the ranked sequence of atoms of the Dirac point mass at $0$. 
The first requirement for $\Lambda$ to be a L\'evy measure is
\begin{equation}
 \label{eqn:integrabilityPi3}
 \Lambda(\{(0,\varnothing)\})=0 \quad\hbox{and} \quad \int_{\mathcal P}(1\wedge x_1^2)\Lambda(\dd \x)<\infty .
\end{equation}
Next, for some parameter $\theta\geq 0$, 
$\Lambda$ has to fulfil a pair of integrability conditions, namely
\begin{equation}
 \label{eqn:integrabilityPi}
 \int_{\mathcal P} \ind{x_1>1} \e^{\theta x_1} \Lambda(\dd \x) < \infty, 
\end{equation}
and
\begin{equation}
 \label{eqn:integrabilityPi2}
 \int_{\mathcal P} \sum_{k=2}^{\infty} \e^{\theta x_k} \Lambda(\dd \x) < \infty,
\end{equation}
with the convention that $ \e^{\theta x}=0$ when $x=-\infty$.

The first requirement enables us to view the image measure $\Lambda_1$ of $\Lambda$ by the first projection $\x \mapsto x_1$ as the L\'evy measure of a L\'evy process. We stress that $\Lambda_1$ may have an atom at $-\infty$: $\Lambda_1(\{-\infty\})=\Lambda(\{\varnothing\})$ is always finite by \eqref{eqn:integrabilityPi3}, but may be strictly positive, and then should be viewed as a pure death rate. The possibly killed L\'evy process that governs the motion of particles has Gaussian coefficient $\sigma^2$ and L\'evy measure $\Lambda_1$. Note also from \eqref{eqn:integrabilityPi} that this L\'evy process has a finite exponential moment of order $\theta$. In turn, the image measure $\Lambda_2$ of~$\Lambda$ by the second projection $\x\mapsto \x_2$ and restricted to the space of non-zero point measures, describes the intensity of the relative positions at birth of the newborn children. We will prove later on that assumptions \eqref{eqn:integrabilityPi} and \eqref{eqn:integrabilityPi2} are in fact equivalent to the requirement that the one-dimensional marginals of the branching Lévy process satisfy the condition \eqref{eqn:integrabilityInfinitelyRamified}.

More precisely, given a Brownian motion $B$ and an independent Poisson point process $N$ on $\R_+ \times \mathcal{P}$ with intensity $\dd t \Lambda(\dd \x)$, the initial individual in the branching L\'evy process moves as a L\'evy process with Brownian component $B$. For each atom $(t,\x)$ of $N$, this individual makes a jump of size $x_1$ at time $t$, and simultaneously produces\footnote
{We stress that, alike Crump-Mode-Jagers generalized branching processes, and contrary to descriptions of Galton-Watson type, individuals do not die at the times when they beget children.} offspring around its pre-jump position according to the sequence $\x_2$. 
 The full trajectory of the ancestor is a L\'evy process with characteristics $(\sigma^2,a,\Lambda_1)$.

We say that a branching L\'evy process has {\em finite birth intensity} if \eqref{eqn:integrabilityPi2} holds with $\theta=0$. In that case, this process can be seen as a special case of a branching Markov process in continuous time, as introduced by Hering \cite{Hering}: each individual moves independently according to a L\'evy process, and at independent exponential times, they are replaced by a family of children positioned around their parent according to an i.i.d. copy of a point measure.
Roughly speaking, branching L\'evy processes with infinite birth intensity can be constructed as the increasing limit of a sequence of branching L\'evy processes with finite birth intensity; the condition \eqref{eqn:integrabilityPi2} ensures that no explosion in finite time occurs. 

Our main result can be stated as follows. 
\begin{theorem}
\label{thm:main} \begin{enumerate}
\item [(i)]
Let $\mathcal{Z}$ be an infinitely ramified point measure. Assume that 
\begin{equation} \label{eqn:integrabilityIR} 
0<\E(\crochet{\mathcal{Z}_1,\ex_\theta})<\infty
\end{equation}
holds for some $\theta \geq0$.
Then there exists a branching Lévy process $(Z_t : t\geq 0)$ with 
$$\mathcal{Z}\, \egaldistr \,Z_1.$$

\item [(ii)] Reciprocally, let $(Z_t : t \geq 0)$ be a branching Lévy process with characteristics $(\sigma^2,a,\Lambda)$ satisfying \eqref{eqn:integrabilityPi3}, \eqref{eqn:integrabilityPi} and \eqref{eqn:integrabilityPi2}. Then $Z_1$ is an infinitely ramified point measure that fulfills \eqref{eqn:integrabilityInfinitelyRamified}. Further, for every $t\geq 0$ and $z\in\C$ with $\Re z=\theta$, one has
$$\E(\crochet{Z_t,\ex_z})= \exp(t\kappa(z)),$$
where 
$$\kappa(z)\coloneqq \frac{\sigma^2}{2} z^2 + az + \int_{{\mathcal P}} \left(\sum_{j=1}^{\infty} \e^{z x_j}-1-z x_1{\mathbf 1}_{|x_1|<1}\right) \Lambda(\dd \x).$$
\end{enumerate}
\end{theorem}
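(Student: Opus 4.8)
The plan is to prove the two implications separately, treating the correspondence between infinitely ramified point measures and branching L\'evy processes as the point-measure analogue of the L\'evy--Khintchin correspondence.

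For part (ii), which I would address first because it is more constructive, I would start from a branching L\'evy process $(Z_t)$ with characteristics $(\sigma^2,a,\Lambda)$ and exploit the branching property in continuous time. The additivity $\log \E(\crochet{Z_{t+s},\ex_z}) = \log\E(\crochet{Z_t,\ex_z}) + \log\E(\crochet{Z_s,\ex_z})$ should follow directly from the branching structure (each atom of $Z_t$ founds an independent copy of the process run for time $s$), so that $t\mapsto \E(\crochet{Z_t,\ex_z})$ is multiplicative; combined with right-continuity this forces the exponential form $\exp(t\kappa(z))$. The heart of part (ii) is therefore to compute $\kappa(z)=\frac{\dd}{\dd t}\big|_{t=0}\E(\crochet{Z_t,\ex_z})$ by decomposing the infinitesimal evolution into its three ingredients: the Brownian drift-diffusion of the ancestor contributes $\frac{\sigma^2}{2}z^2+az$, and each atom $(t,\x)$ of the driving Poisson point process with intensity $\dd t\,\Lambda(\dd\x)$ contributes, at first order, the displacement of the parent by $x_1$ together with the creation of the children $\x_2$, which accounts for the term $\sum_{j\ge 1}\e^{zx_j}-1-zx_1\mathbf 1_{|x_1|<1}$ under $\Lambda(\dd\x)$. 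The integrability assumptions \eqref{eqn:integrabilityPi3}, \eqref{eqn:integrabilityPi} and \eqref{eqn:integrabilityPi2} are precisely what make this integral converge on the line $\Re z=\theta$: \eqref{eqn:integrabilityPi3} controls the compensated small jumps of the first coordinate, \eqref{eqn:integrabilityPi} handles the large jumps $x_1>1$, and \eqref{eqn:integrabilityPi2} controls the mass of the offspring. Once $\E(\crochet{Z_1,\ex_\theta})=\exp(\kappa(\theta))\in(0,\infty)$ is finite and positive, condition \eqref{eqn:integrabilityInfinitelyRamified} holds for $\mathcal Z=Z_1$; and $Z_1$ is infinitely ramified because the dyadic skeleton $(Z_{k2^{-n}}:k\in\Z_+)$ is, for each fixed $n$, a branching random walk whose $(2^n)$-th generation equals $Z_1$.

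For part (i), the task is the converse construction: given an infinitely ramified $\mathcal Z$ satisfying \eqref{eqn:integrabilityIR}, produce a branching L\'evy process whose time-$1$ value has the same law. I would build, for each $n$, a branching random walk $(Z^{(n)}_k)_k$ whose $n$-th generation is distributed as $\mathcal Z$, and assemble these into a consistent family of dyadic skeletons, mimicking the passage from infinitely divisible laws through random walks to L\'evy processes sketched in the introduction. Writing $\Phi_n$ for the one-step reproduction law associated to $Z^{(n)}$, one expects $n\,\Phi_n$ (suitably interpreted as a measure on $\mathcal P$ after removing the neutral atom $(0,\varnothing)$) to converge to a candidate L\'evy measure $\Lambda$, with the Gaussian coefficient $\sigma^2$ and drift $a$ emerging from the behaviour of the first atom near $0$. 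The analytic control comes from the Laplace functional: \eqref{eqn:integrabilityIR} guarantees that $\E(\crochet{\mathcal Z_1,\ex_\theta})$ is finite, hence that each $\Phi_n$ has a finite log-Laplace transform along $\Re z=\theta$, and the multiplicative structure forces these transforms to be compatible, so that $\kappa(z)=\lim_n n\big(\E(\crochet{Z^{(n)}_1,\ex_z})-1\big)$ exists and has exactly the L\'evy--Khintchin form displayed in the theorem. One then checks that the triple $(\sigma^2,a,\Lambda)$ so obtained satisfies \eqref{eqn:integrabilityPi3}--\eqref{eqn:integrabilityPi2} and invokes part (ii) to realise it as an honest branching L\'evy process with $Z_1\egaldistr\mathcal Z$.

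I expect the main obstacle to lie in part (i), specifically in establishing the tightness and convergence of the rescaled reproduction measures $n\,\Phi_n$ to a genuine sigma-finite measure $\Lambda$ on $\mathcal P$, and in disentangling the Gaussian/drift part from the jump part of the ancestral motion. Unlike the scalar infinitely divisible setting, here $\mathcal P$ is an infinite-dimensional state space, so one cannot simply appeal to the classical L\'evy--Khintchin representation; the convergence must be controlled simultaneously in the first-coordinate (ancestor displacement) direction, where compensation of small jumps is needed, and in the tail-of-children direction governed by \eqref{eqn:integrabilityPi2}. A delicate point will be to verify that no mass escapes to the degenerate configuration $(0,\varnothing)$ and that the candidate $\Lambda$ charges neither spurious atoms nor fails the square-integrability near $x_1=0$; the uniform bound coming from \eqref{eqn:integrabilityIR} along the line $\Re z=\theta$, together with the already-established analyticity of $\kappa$, should be the tool that closes this gap.
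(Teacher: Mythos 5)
Your outline of part (ii) is broadly consistent with the paper, which indeed obtains the exponential form of $t\mapsto\E(\crochet{Z_t,\ex_z})$ from the branching property and computes $\kappa$ from the three ingredients $(\sigma^2,a,\Lambda)$ (the paper does this by truncating $\Lambda$ to get finite birth intensity, computing the cumulant there via the first branching time, and passing to the increasing limit). The serious problems are in part (i), where your plan has two genuine gaps.

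First, you propose to ``assemble'' the branching random walks $(Z^{(n)}_k)_k$ into a consistent family of dyadic skeletons as if this were routine, mimicking the scalar case. It is not. In the classical setting, consistency of the $n$-th roots is automatic because the characteristic function of an infinitely divisible law never vanishes and hence has a unique continuous $n$-th root. For point measures there is no analogous uniqueness (the paper explicitly states it could not even prove uniqueness of the square root of the reproduction law), so the reproduction law of $Z^{(2n)}$ composed with itself need not reproduce the reproduction law of $Z^{(n)}$. The existence of a \emph{nested} family --- equivalently, of an infinitely ramified element inside the set $\mathbf{R}(P)$ of $\circledast$-square-roots of the law of $\mathcal{Z}$ --- is the crux of part (i), and the paper devotes Section~\ref{sec:infinitelyRamifiedPointMeasure} to it: one shows $\mathbf{R}(P)$ and the finer sets $\mathbf{R}_n(P)$ are non-empty, closed and compact in $\mathbf{P}_\theta$ (via the many-to-one formula, a tightness criterion, and continuity of $\circledast$), and concludes by Cantor's intersection theorem. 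Your sketch contains no substitute for this step.

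Second, even granting a candidate triple $(\sigma^2,a,\Lambda)$ with the right cumulant, your closing move --- ``invoke part (ii) to realise it as a branching L\'evy process with $Z_1\egaldistr\mathcal Z$'' --- does not work: equality of $\E(\crochet{Z_1,\ex_z})$ along $\Re z=\theta$ only matches the exponentially weighted intensity measure of $Z_1$ with that of $\mathcal Z$, i.e.\ a first-moment quantity, and does not determine the law of a random point measure. The paper circumvents this by never detaching the characteristics from the process: it equips the nested branching random walk with a genealogy, censors individuals whose ancestral trajectory makes a jump below $-n$ to produce finite-birth-intensity approximations $Z^{(n)}$ (already identified as branching L\'evy processes), reads off compatible characteristics $(\sigma^2,a,\Lambda_n)$, and then identifies the increasing limit $Z^{(\infty)}$ with $Z$ \emph{pathwise}, using $Z^{(n)}_t\leq Z_t$ together with convergence of the expectations. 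Your proposed extraction of $\Lambda$ as a weak limit of $n\,\Phi_n$ on the infinite-dimensional space $\mathcal P$ is exactly the obstacle you flag, but you offer no mechanism to overcome it; the censoring construction is that mechanism, and it requires the genealogical structure and the pathwise many-to-one formula, neither of which appears in your plan.
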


The proof of Theorem \ref{thm:main}(i) relies on the construction of an intermediary process in dyadic rational times,  $(Z_t:t \in D)$, called a \textit{nested branching random walk}, meaning that each discrete-time skeleton is a branching random walk. We prove that to each infinitely ramified point measure is associated a nested branching random walk such that $\mathcal{Z}\, \egaldistr \,Z_1$, and that for each nested branching random walk, there exists a unique triplet $(\sigma^2,a,\Lambda)$ such that the restriction of a branching Lévy process with these characteristics to dyadic rational times has the law of that nested branching random walk.

There are two aspects of Theorem \ref{thm:main} that may seem unsatisfactory when one compares with the  classical analog for infinitely distributions and L\'evy processes. First, this result holds under the non-degeneracy assumption \eqref{eqn:integrabilityIR}; we have already argued that this is however a very natural and common hypothesis in the framework of branching random walks. Second, we have been unable to establish uniqueness of the distribution of branching Lévy processes associated to an infinitely ramified point measure.  In the classical framework, it is known that the characteristic function of  an infinite divisible law is never $0$ and thus possesses a unique continuous complex $n$-th root, which is then the characteristic function of the unique $n$-th root  of that law in the sense of the convolution operation.  Unfortunately, this argument cannot be transferred to the framework of point measures. We were only able to prove the existence of an $n$-th root\footnote{In the sense induced by the branching operation, that is, an $n$-th root of $\mathcal{Z}$ is  the reproduction law of a branching random walk $(Z_n: n\geq 0)$ such that $Z_n \, \egaldistr \, \mathcal{Z}$.} of an infinitely ramified point measure $ \mathcal{Z}$ using a compactness argument that relies again crucially on \eqref{eqn:integrabilityIR}. The lack of a handy characterization of such an $n$-th root in terms of the law of $\mathcal{Z}$ hindered us from tackling the issue of uniqueness. We stress that nonetheless, uniqueness of the characteristic triplet $(\sigma^2,a, \Lambda)$ of a branching Lévy process holds, see the forthcoming Remark~\ref{rem:uniqueness}.

\paragraph*{Organisation of the paper.}
The existence of an embedding of an infinitely ramified point measure into a nested branching random walk is proven in Section~\ref{sec:infinitelyRamifiedPointMeasure}. Roughly speaking, the key issue is to establish that every infinitely ramified point measure has the same law as the second generation of a branching random walk whose reproduction law is also given by an infinitely ramified point measure. This will be achieved by compactness arguments on the space of probability distributions on ${\mathcal P}$. 

In Section \ref{sec:infinitelyDivisibleFirstProperties}, 
we shall start by proving that a nested branching random walk $Z$ always possesses an a.s. c\`adl\`ag extension in real time. We also establish a many-to-one formula for one-dimensional distributions in this framework, and extend the simple branching property to stopping times. 

Section~\ref{sec:finiteBirthIntensity} is devoted to nested branching random walks with finite birth intensity. Analyzing the first branching time naturally yields the notion of branching L\'evy process in this simple case, and Theorem~\ref{thm:main} is then easily checked in this setting. 

General branching L\'evy processes are constructed in Section~\ref{sec:branchingLevyProcess} as increasing limits of branching L\'evy processes with finite birth intensity, adapting arguments in \cite{BeCF}. One readily observes that these processes satisfy the branching property as well as \eqref{eqn:integrabilityInfinitelyRamified}, which establishes Theorem \ref{thm:main} (ii). 

Our main task in Section~\ref{sec:genealogicalStructure} is to equip nested branching random walks with a natural genealogy. This enables us to define ancestral lineages and establish a pathwise version of the many-to-one formula. This further allows us to introduce a censoring procedure, by killing certain individuals depending on the behavior of their ancestral lineage. The upshot is that this yields an approximation of a nested branching random walk by a sequence of nested branching random walks with finite birth intensities, which in turn enables us to complete the proof of Theorem~\ref{thm:main}.

\section{From infinitely ramified point measures to \texorpdfstring{\\}{} nested branching random walks}
\label{sec:infinitelyRamifiedPointMeasure}

A \emph{nested branching random walk} is a $\mathcal{P}$-valued process $(Z_t:t \in D)$ with $Z_0 = \delta_0$ that satisfies the simple branching property:
\begin{description}
 \item[(B)] For every $s,t \in D$, we have
$$Z_{t+s} = \sum_{n=1}^{\infty} \tau_{x_n} Z^n_t,\quad \text{ where }\x=Z_s,$$
with $(Z^n_t: n\in\N)$ a family of i.i.d. copies of $Z_t$, independent of $\calF_s$, 
\end{description}
where $\calF_s \coloneqq \sigma(Z_r : r\in D, r \leq s)$ denotes the natural filtration of $Z$.
The terminology refers to the fact that (B) is equivalent to the requirement  that the discrete time skeletons $(Z_{k2^{-n}}: k\in \Z_+)$ of $Z$ are branching random walks. 

The main purpose of this section is to establish the following embedding property of infinitely ramified point measures into nested branching random walks. 
We will then prove that nested branching random walks possess a unique c\`adl\`ag extension to real times $(Z_t: t\in \R_+)$, and that the branching property holds more generally at stopping times.

\begin{proposition}
\label{thm:irpmToNbrw}
Let $\mathcal{Z}$ be an infinitely ramified point measure satisfying \eqref{eqn:integrabilityIR}. There exists a nested branching random walk $(Z_t : t \in D)$ with 
$$\mathcal{Z}\, \egaldistr \,Z_1.$$
\end{proposition}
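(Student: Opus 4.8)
The plan is to reduce Proposition~\ref{thm:irpmToNbrw} to the construction of a consistent sequence of \emph{square roots}. For a probability law $\mu$ on $\mathcal P$, write $\mu^{\star k}$ for the law of the $k$-th generation of a branching random walk whose reproduction law is $\mu$; thus $\mathcal{Z}$ being infinitely ramified means precisely that for every $n$ there is a law $\rho_n$ with $\rho_n^{\star n}=\mathcal{Z}$ in distribution, an \emph{$n$-th root} of $\mathcal Z$. If I can produce laws $\mathcal{Z}=\mathcal{Z}_0,\mathcal{Z}_1,\mathcal{Z}_2,\dots$, each infinitely ramified and each satisfying \eqref{eqn:integrabilityIR}, with $\mathcal{Z}_{n+1}^{\star 2}=\mathcal{Z}_n$ for all $n$, then declaring the level-$n$ skeleton $(Z_{k2^{-n}}:k\in\Z_+)$ to be a branching random walk with reproduction law $\mathcal{Z}_n$ gives a consistent family: two steps of $\mathcal{Z}_{n+1}$ amount to one step of $\mathcal{Z}_{n+1}^{\star 2}=\mathcal{Z}_n$, so the level-$n$ and level-$(n+1)$ skeletons agree on their common times. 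The Kolmogorov extension theorem then yields a process $(Z_t:t\in D)$ with $Z_0=\delta_0$ whose every dyadic skeleton is a branching random walk, i.e.\ a nested branching random walk, with $Z_1\egaldistr \mathcal{Z}_0=\mathcal{Z}$.

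The whole difficulty is therefore the following square-root lemma: every infinitely ramified $\mathcal{Z}$ satisfying \eqref{eqn:integrabilityIR} is the second generation of a branching random walk whose reproduction law $\mathcal{Z}'$ is again infinitely ramified and satisfies \eqref{eqn:integrabilityIR}. Iterating this lemma produces the sequence $(\mathcal{Z}_n)$ above. To prove it I would argue by compactness. For each $m$, pick a $2m$-th root $\rho_{2m}$ of $\mathcal{Z}$ and set $\sigma^{(m)}\coloneqq \rho_{2m}^{\star m}$; then $(\sigma^{(m)})^{\star 2}=\rho_{2m}^{\star 2m}=\mathcal{Z}$, so each $\sigma^{(m)}$ is a square root of $\mathcal{Z}$, and moreover $\sigma^{(m)}$ admits a $d$-th root, namely $\rho_{2m}^{\star(m/d)}$, for every $d$ dividing $m$. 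The key quantitative input is the first-moment (many-to-one) identity $\E\crochet{\mu^{\star k},\ex_\theta}=(\E\crochet{\mu,\ex_\theta})^k$, which follows from the branching property by conditioning; writing $M\coloneqq \E\crochet{\mathcal{Z},\ex_\theta}\in(0,\infty)$ it gives $\E\crochet{\sigma^{(m)},\ex_\theta}=\sqrt M$ and $\E\crochet{\rho_{2m}^{\star(m/d)},\ex_\theta}=M^{1/(2d)}$ for every $m$ and every $d\mid m$. These uniform bounds, via Markov's inequality, control the number and height of atoms above any level and forbid escape to $+\infty$, which yields tightness of the laws of the $\sigma^{(m)}$ and of all the associated $d$-th roots.

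I would then take $m$ along the factorials $m=n!$, so that $\sigma^{(n!)}$ has a $d$-th root for every $d\le n$, and extract by a diagonal procedure a subsequence along which $\sigma^{(n!)}$ converges weakly to some law $\mathcal{Z}'$ and, for each fixed $d$, its $d$-th roots converge weakly to some law $r_d$. Passing the relations $(\sigma^{(n!)})^{\star 2}=\mathcal{Z}$ and $r_d^{\star d}=\sigma^{(n!)}$ to the limit gives $(\mathcal{Z}')^{\star 2}=\mathcal{Z}$ and $r_d^{\star d}=\mathcal{Z}'$ for every $d$, so $\mathcal{Z}'$ is a square root of $\mathcal{Z}$, is infinitely ramified, and satisfies \eqref{eqn:integrabilityIR} since $0<\E\crochet{\mathcal{Z}',\ex_\theta}\le\sqrt M<\infty$ (finiteness by lower semicontinuity of $\mu\mapsto\E\crochet{\mu,\ex_\theta}$, positivity because $(\mathcal{Z}')^{\star 2}=\mathcal{Z}\neq\varnothing$ with positive probability forces $\mathcal{Z}'\neq\varnothing$ with positive probability). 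The main obstacle, and the step requiring genuine care, is precisely this passage to the limit: I must fix a topology on $\mathcal P$ (and on probability laws over it) in which the above families are tight \emph{and} in which the branching-convolution maps $\mu\mapsto\mu^{\star k}$ are continuous, so that the defining algebraic relations survive under weak limits. This continuity can fail if atoms leak to $+\infty$ along the limit, and it is exactly the uniform exponential-moment bounds coming from \eqref{eqn:integrabilityIR} that rule this out; establishing this continuity rigorously is the crux of the argument.
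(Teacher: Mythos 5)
Your proposal follows essentially the same route as the paper: a uniform many-to-one moment bound makes the set of square roots of $\mathcal{Z}$ precompact in $\mathbf{P}_\theta$, continuity of the branching convolution lets the algebraic relations pass to the limit, and iteration plus Kolmogorov extension finishes the proof; the paper merely organizes the extraction via the nested compact sets $\mathbf{R}_n(P)$ and Cantor's intersection theorem where you use a diagonal argument along factorials, and these are interchangeable. The one step you flag but do not carry out --- continuity of $(P,Q)\mapsto P\circledast Q$ under a uniform norm-like $\ex_\theta$-moment bound --- is precisely the paper's Lemma~\ref{lem:continue++}, and your identification of the hypotheses needed there matches what that lemma requires.
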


Let $\mathcal{Z}$ be an infinitely ramified point measure (recall that we always assume that \eqref{eqn:integrabilityIR} holds). In words, for every $n \in \N$ there exists a branching random walk such that the $n$-th generation of that process has same law as $\mathcal{Z}$. In this section, we fix $\theta \geq 0$ such that \eqref{eqn:integrabilityIR} is fulfilled, so
\[\kappa(\theta)\coloneqq \ln \E\left(\crochet {\mathcal{Z}, \ex_{\theta}}\right)\in\R.\]
We introduce the $\theta$-exponentially weighted intensity of $\mathcal{Z}$, say ${m}_{\theta}$, which is the probability measure on $\R$ defined by 
$$\crochet{{m}_{\theta}, f}\coloneqq \e^{-\kappa(\theta)}\E\left(\crochet{\mathcal{Z},\ex_{\theta}f}\right),$$
where $f\in L^{\infty}(\R)$ denotes a generic bounded measurable function. 

One of the key tools for the study of branching random walks is the well-known many-to-one formula, that can be traced back at least to the early work of Kahane and Peyri\`ere \cite{KaP,Pey}; see Theorem~1.1 in \cite{ShiSF}. This result enables us to identify $m_{\theta}$ as the distribution of a real-valued random walk evaluated after $n$ steps. Hence, the assumption that $\mathcal{Z}$ is an infinitely ramified point measure implies that ${m}_{\theta}$ is infinitely divisible.
We write $\Psi$ for its characteristic exponent, that is $\Psi: \R\to \C$ is the unique continuous function with $\Psi(0)=0$ such that 
\begin{equation}\label{eq:psi}
\crochet{{m}_{\theta}, \ex_{i {r}}}= \e^{-\kappa(\theta)}\E\left(\crochet{\mathcal{Z},\ex_{\theta+i {r}}}\right)=
\e^{\Psi({r})}\,, \qquad {r} \in\R.
\end{equation}
This enables us to introduce a L\'evy process (without killing) $\xi=(\xi_t: t\in\R_+)$ with characteristic exponent $\Psi$, i.e. satisfying $\E(\e^{i{r} \xi_t})=\e^{t\Psi({r})}$ for all ${r} \in \R$ and $t \geq 0$. Note that the law of $\xi$ is determined by the law of $\mathcal{Z}$. In the sequel, it will be convenient to use the notation
$$\kappa(\theta+i{r})\coloneqq \kappa(\theta)+\Psi({r}) \qquad \text{for every }{r} \in \R,$$
and refer to $\kappa$ as the {\em cumulant} of $\mathcal{Z}$.

A special case of the many-to-one formula for ${\mathcal Z}$, which will be useful in this section, can be stated as follows. 

\begin{lemma}[Many-to-one formula]
\label{lem:manytoonePrimal}
For any $n \in \N$, if $(Z^{(n)}_k: k\in \Z_+)$ is a branching random walk such that $Z^{(n)}_n$ has same law as $\mathcal{Z}$, then for all measurable functions $f : \R \to \R_+$ and all integers $j\in\N$, we have
\[
 \E\left( \crochet{Z^{(n)}_j,f} \right) = \e^{\kappa(\theta)j/n}\E\left( \e^{-\theta \xi_{j/n}} f(\xi_{j/n}) \right).
\]
\end{lemma}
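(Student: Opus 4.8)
The plan is to reduce the statement to the classical many-to-one formula by combining the additive structure of the branching random walk with an exponential tilt by $\theta$. Fix $n$ and let $\mu_j$ denote the intensity measure of the $j$-th generation, i.e. $\crochet{\mu_j,f}=\E(\crochet{Z^{(n)}_j,f})$ for Borel $f\geq 0$. The first thing I would record is that $W_j\coloneqq\E(\crochet{Z^{(n)}_j,\ex_\theta})$ is multiplicative: applying the branching property (B) and the identity $\crochet{\tau_x\mu,\ex_\theta}=\e^{\theta x}\crochet{\mu,\ex_\theta}$ gives $W_{j+1}=W_1W_j$, hence $W_j=W_1^j$ in $[0,\infty]$. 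Since $W_n=\E(\crochet{\mathcal{Z},\ex_\theta})=\e^{\kappa(\theta)}\in(0,\infty)$ by \eqref{eqn:integrabilityIR}, this forces $W_1=\e^{\kappa(\theta)/n}\in(0,\infty)$ and therefore $W_j=\e^{\kappa(\theta)j/n}<\infty$. In particular the measure $\e^{\theta y}\mu_j(\dd y)$ is finite with total mass $W_j$, so $\mu_j$ is $\sigma$-finite on $\R$ and the normalized tilts $p_j(\dd y)\coloneqq\e^{-\kappa(\theta)j/n}\e^{\theta y}\mu_j(\dd y)$ are genuine probability measures.

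Next I would establish the convolution-semigroup structure. Because translations act additively on atoms (with the convention that atoms sent to the cemetery $-\infty$ contribute nothing), the branching property yields $\mu_{j+k}=\mu_j*\mu_k$ for nonnegative test functions, the interchange of integrations being legitimate by Tonelli together with the $\sigma$-finiteness just obtained; hence $\mu_j=\mu_1^{*j}$. Since exponential weighting commutes with convolution, $\e^{\theta\,\cdot}(\mu_j*\mu_k)=(\e^{\theta\,\cdot}\mu_j)*(\e^{\theta\,\cdot}\mu_k)$, this transfers to $p_j=p_1^{*j}$. Unwinding the definition of $p_j$ (equivalently, this is exactly the classical many-to-one formula with tilting parameter $\theta$) then gives
\[
\E\bigl(\crochet{Z^{(n)}_j,f}\bigr)=\crochet{\mu_j,f}=\e^{\kappa(\theta)j/n}\int_{\R}\e^{-\theta y}f(y)\,p_j(\dd y)=\e^{\kappa(\theta)j/n}\,\E\bigl(\e^{-\theta S_j}f(S_j)\bigr),
\]
where $S=(S_j)$ is the random walk whose steps are i.i.d. with common law $p_1$. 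It then only remains to identify $S_j$ in distribution with $\xi_{j/n}$.

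For this final step I would note that $p_n=\e^{-\kappa(\theta)}\e^{\theta y}\mu_n(\dd y)=m_\theta$, because $Z^{(n)}_n\egaldistr\mathcal{Z}$ makes $\mu_n$ the intensity of $\mathcal{Z}$; hence $p_1^{*n}=m_\theta$ and $S_n\egaldistr\xi_1$. Both $S_1$ and $\xi_{1/n}$ are thus $n$-th convolution roots of the infinitely divisible law $m_\theta$, so the identification follows from uniqueness of such roots: since the characteristic function $r\mapsto\crochet{m_\theta,\ex_{ir}}=\e^{\Psi(r)}$ never vanishes, its continuous $n$-th root normalized to $1$ at the origin is unique and equals $\e^{\Psi(\cdot)/n}$; as the characteristic function of $p_1$ is precisely such a root, we get $S_1\egaldistr\xi_{1/n}$ and therefore $S_j\egaldistr\xi_{j/n}$, which substituted into the display yields the claim. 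I expect the only genuinely delicate point to be this root-uniqueness argument — checking that the continuous characteristic function of $p_1$ is forced onto the branch $\e^{\Psi/n}$ rather than onto some other continuous $n$-th root — while the remaining work is routine bookkeeping to keep the intensity measures $\sigma$-finite and to discard correctly the atoms sent to the cemetery when establishing $\mu_{j+k}=\mu_j*\mu_k$.
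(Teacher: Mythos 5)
Your proof is correct and follows essentially the same route as the paper's: apply the exponentially tilted many-to-one formula and then identify the step law of the resulting random walk with the law of $\xi_{1/n}$ via uniqueness of continuous $n$-th roots of the non-vanishing characteristic function $\e^{\Psi}$ of the infinitely divisible law $m_\theta$. The only difference is one of detail: you re-derive the many-to-one formula from the convolution structure of the intensity measures and spell out the root-uniqueness argument, both of which the paper respectively cites (Theorem~1.1 in \cite{ShiSF}) and asserts in one line.
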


\begin{proof}
Applying the many-to-one formula to the branching random walk $Z^{(n)}$, there exists a random walk $S^{(n)}$ such that
\[
 \E\left( \crochet{Z^{(n)}_n,\ex_\theta f} \right) = \e^{\kappa(\theta)}\E\left( f(S^{(n)}_n) \right),
\]
for all measurable positive functions $f$. Therefore, $S^{(n)}_n$ has the same law as $\xi_1$, from which we conclude that $S^{(n)}_j$ has same law as $\xi_{j/n}$. Using again the many-to-one formula, we have
\[
 \E\left( \crochet{Z^{(n)}_j, f} \right) = \e^{\kappa(\theta)j/n}\E\left(\e^{-\theta \xi_{j/n}} f(\xi_{j/n}) \right)
\]
for all non-negative measurable functions $f$, concluding the proof.
\end{proof}

The rest of this section is devoted to establish Proposition \ref{thm:irpmToNbrw}. The proof relying on compactness arguments, we carefully introduce the topological spaces we will be using. To start with, let 
\[{\mathcal P}_{\theta}\coloneqq\{\mu\in {\mathcal P}: \crochet{\mu,\ex_{\theta}}<\infty\}\]
be the subspace of point measures $\mu$ such that $\ex_{\theta}\mu$ is a finite measure; we henceforth view $Z$ as a process with values in ${\mathcal P}_{\theta}$. The space of finite measures on $\R$ is naturally endowed with the topology of weak convergence, and the set $\{\ex_{\theta}\mu: \mu\in{\mathcal P}_{\theta}\}$ is a closed subset thereof. We thus say that a sequence $(\mu_n: n\in\N)$ in ${\mathcal P}_{\theta}$ converges to $\mu\in {\mathcal P}_{\theta}$ and write
 $$\lim_{n\to\infty }\mu_n=\mu\quad\hbox{ in }{\mathcal P}_{\theta}$$
if and only if 
\[
 \forall f \in \calC_b, \lim_{n \to \infty} \crochet{\mu_n, \ex_\theta f} = \crochet{\mu,\ex_\theta f},
\]
where $\mathcal{C}_b$ is the set of continuous bounded functions on $\R$. Plainly, convergence in ${\mathcal P}_{\theta}$ is stronger than vague convergence, or convergence of tail functions (pointwise, except possibly at discontinuity points of the limit). In turn, the latter is also equivalent to the simple convergence of the ranked sequence of the atoms. As a point measure $\mu \in \mathcal{P}_\theta$ can be identified with the finite measure $\ex_\theta. \mu$ on $\R$, we note that $\mathcal{P}_\theta$ can be seen as a closed subspace of the set of finite measures on $\R$ endowed with the topology of the weak convergence. Therefore, the space $\mathcal{P}_\theta$ endowed with this topology is a Polish space (see, for instance, Lemma 4.5 in Kallenberg \cite{Kall}). 

We write $\mathbf{P}_\theta$ for the set of probability measures on $\mathcal{P}_\theta$, which is also endowed with the topology of the weak convergence. We give a simple condition for a subset of $\mathbf{P}_\theta$ to be compact. We call that a continuous function $f: \R\to (0,\infty)$ norm-like if
\[
 \lim_{x \to \infty} f(x) = \lim_{x \to -\infty} f(x) = \infty,
\]
and observe that any family ${\mathbf F}$ of finite measures on $\R$ is tight if and only if there exists a norm-type function $f$ with $\sup_{m\in {\mathbf F}}\crochet{m,f}<\infty$; see e.g. Lemma D 5.3 in Meyn and Tweedie \cite{MeynTwee}. 
\begin{lemma}
\label{lem:compact}
Let $\mathbf{K}$ be a closed non-empty subset of $\mathbf{P}_\theta$. If there exists a continuous norm-like  function $f$ satisfying
\[
 \sup_{P \in \mathbf{K}} \int \crochet{\nu,\ex_\theta f} P(\dd \nu) <\infty,
\]
then $\mathbf{K}$ is compact.
\end{lemma}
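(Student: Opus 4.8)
The plan is to prove compactness of $\mathbf{K}$ by combining Prokhorov's theorem with the assumed closedness. Since $\mathbf{P}_\theta$ is the space of probability measures on the Polish space $\mathcal{P}_\theta$ (identified with a closed subset of the finite measures on $\R$), it is itself Polish under weak convergence, and so relative compactness is equivalent to tightness. As $\mathbf{K}$ is already closed, it suffices to show that $\mathbf{K}$ is tight, i.e. that for every $\epsilon>0$ there is a compact set $\mathbf{C}\subset \mathcal{P}_\theta$ with $P(\mathbf{C})\geq 1-\epsilon$ for all $P\in\mathbf{K}$.

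The key step is to produce, from the single scalar bound $M\coloneqq \sup_{P\in\mathbf K}\int \crochet{\nu,\ex_\theta f}\,P(\dd\nu)<\infty$, a uniform control that exhibits such a compact set. First I would recall that under the identification $\nu\mapsto \ex_\theta.\nu$, compact subsets of $\mathcal{P}_\theta$ correspond to tight families of finite measures on $\R$ with a uniform bound on total mass. Concretely, using the cited criterion (Lemma D 5.3 in Meyn--Tweedie), the set
\[
 \mathbf{C}_R\coloneqq \left\{\nu\in\mathcal{P}_\theta:\ \crochet{\nu,\ex_\theta f}\leq R\right\}
\]
is, for each $R>0$, a closed subset of $\mathcal{P}_\theta$ consisting of measures $\ex_\theta.\nu$ that are tight (via the norm-like function $f$) and of uniformly bounded total mass (since $f$ is bounded below away from $0$ on compacts and $\to\infty$ at infinity, $\crochet{\nu,\ex_\theta f}\leq R$ forces $\crochet{\nu,\ex_\theta}$ bounded); hence $\mathbf{C}_R$ is relatively compact, and being closed, compact in $\mathcal{P}_\theta$.

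Given this, tightness of $\mathbf K$ follows from a Markov inequality applied at the level of $\mathbf{P}_\theta$: for any $P\in\mathbf K$,
\[
 P\left(\mathcal{P}_\theta\setminus \mathbf{C}_R\right)=P\left(\crochet{\nu,\ex_\theta f}> R\right)\leq \frac{1}{R}\int \crochet{\nu,\ex_\theta f}\,P(\dd\nu)\leq \frac{M}{R}.
\]
Choosing $R=M/\epsilon$ makes this at most $\epsilon$ uniformly in $P\in\mathbf K$, so $\mathbf{C}_{M/\epsilon}$ is a compact set capturing mass at least $1-\epsilon$ for every element of $\mathbf K$. Thus $\mathbf K$ is tight, hence relatively compact by Prokhorov, and closed by hypothesis, therefore compact.

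The main obstacle, and the step requiring the most care, is verifying that $\mathbf{C}_R$ is genuinely compact in $\mathcal{P}_\theta$ rather than merely tight as a family of measures. The subtlety is that convergence in $\mathcal{P}_\theta$ is the weak convergence of the weighted measures $\ex_\theta.\nu$, so one must confirm two things: that the norm-like function $f$ controls escape of mass to $\pm\infty$ (delivering tightness of $\{\ex_\theta.\nu:\nu\in\mathbf{C}_R\}$), and that the total masses $\crochet{\nu,\ex_\theta}$ stay uniformly bounded so that no mass is lost in the limit and the weak limit remains a measure of the form $\ex_\theta.\nu'$ with $\nu'\in\mathcal{P}_\theta$. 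The latter uses that $\{\ex_\theta\mu:\mu\in\mathcal{P}_\theta\}$ is a closed subset of the finite measures, as noted earlier in the text, so any weak limit of weighted point measures is again a weighted point measure; this guarantees $\mathbf{C}_R\subset\mathcal{P}_\theta$ is closed and its relative compactness upgrades to compactness.
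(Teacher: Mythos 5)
Your proof is correct and rests on the same two pillars as the paper's argument --- convert the uniform moment bound into uniform tightness of $\mathbf{K}$ via Markov's inequality, then invoke Prokhorov's theorem together with the assumed closedness --- but the compact sets you exhibit are different. The paper works with the set ${\mathcal L}_\epsilon$ of measures $\nu$ satisfying the countable family of constraints $\crochet{\nu,\ex_\theta}\le 1/\epsilon$ and $\crochet{\nu,\ex_\theta\mathbf{1}_{K_{n,\epsilon}^c}}\le 1/n$ for all $n$, whose tightness is immediate from its definition, and then controls $P({\mathcal L}_\epsilon^c)$ by a union bound over $n$ with the summable weights $\epsilon/(n2^n)$. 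You instead take the single sublevel set $\mathbf{C}_R=\{\nu\in\mathcal{P}_\theta:\crochet{\nu,\ex_\theta f}\le R\}$ and apply Markov once; this is shorter, at the price of having to verify that $\mathbf{C}_R$ is genuinely compact in $\mathcal{P}_\theta$. That verification is the one place where your write-up is slightly off: the closedness of $\{\ex_\theta\mu:\mu\in\mathcal{P}_\theta\}$ among finite measures only guarantees that a weak limit of elements of $\mathbf{C}_R$ again lies in $\mathcal{P}_\theta$; to conclude that the limit still satisfies $\crochet{\nu,\ex_\theta f}\le R$ you also need the lower semicontinuity of $m\mapsto\crochet{m,f}$ under weak convergence of finite measures (a Fatou/portmanteau argument, since $f$ is continuous, non-negative and unbounded). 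Alternatively, you can sidestep closedness of $\mathbf{C}_R$ altogether: its relative compactness --- which follows from the uniform mass bound $\crochet{\nu,\ex_\theta}\le R/\inf f$ (using $\inf_{\R}f>0$ for a continuous norm-like $f$), tightness of $\{\ex_\theta\nu:\nu\in\mathbf{C}_R\}$ via the norm-like property, Prokhorov for finite measures, and closedness of $\mathcal{P}_\theta$ --- already suffices, since the closure of $\mathbf{C}_R$ in $\mathcal{P}_\theta$ is then a compact set carrying $P$-mass at least $1-\epsilon$ for every $P\in\mathbf{K}$. With either repair the argument is complete.
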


\begin{proof}
Up to multiplying $f$ by a constant, we may assume that $f \geq 1$. Let $\epsilon \in (0,1)$, and for any $n \in \N$, set
\[
 K_{n,\epsilon} \coloneqq \left\{ x \in \R : f(x) \leq n2^n/\epsilon \right\}.
\]
which is a compact subset of $\R$, and
\[
 {\mathcal L}_\epsilon = \left\{ \nu \in \mathcal{P}_\theta : \crochet{\nu,\ex_\theta} \leq 1/\epsilon, \enskip \forall n \in \N, \crochet{\nu,\ex_\theta \mathbf{1}_{K_{n,\epsilon}^c}} \leq 1/n \right\}.
\]
By definition, ${\mathcal L}_\epsilon$ is tight and obviously closed. Hence by Prokhorov's theorem, this is a compact subset of $\mathcal{P}_\theta$. Moreover, for any $P \in \mathbf{K}$, we have
\begin{multline*}
\qquad P({\mathcal L}_\epsilon^c) \leq P\left( \left\{ \nu \in \mathcal{P}_\theta : \crochet{\nu,\ex_\theta} > 1/\epsilon\right\} \right)\\
+\sum_{n =1}^\infty P\left( \left\{ \nu \in \mathcal{P}_\theta : \crochet{\nu,\ex_\theta \mathbf{1}_{K_{n,\epsilon}^c}} > 1/n \right\}\right). \qquad
\end{multline*}
By Markov inequality, 
$$P\left( \left\{ \nu \in \mathcal{P}_\theta : \crochet{\nu,\ex_\theta} > 1/\epsilon \right\} \right)\leq \epsilon \int \crochet{\nu,\ex_\theta f} P(\dd \nu),$$ and
\begin{eqnarray*}
 \sum_{n =1}^\infty P\left( \left\{ \nu \in \mathcal{P}_\theta : \crochet{\nu,\ex_\theta \mathbf{1}_{K_{n,\epsilon}^c}} > 1/n \right\}\right)
 &\leq & \sum_{n=1}^\infty n\frac{\epsilon}{n2^n}\int_{\mathcal{P}_\theta} \crochet{\nu,\ex_\theta f} P(\dd \nu)\\
 &\leq& \epsilon \sup_{Q \in \mathbf{K}} \int \crochet{\nu,\ex_\theta f} Q(\dd \nu),
\end{eqnarray*}
which enables us to conclude the proof using again Prokhorov's theorem.
\end{proof}

We next introduce a convolution-type operation on $\mathbf{P}_\theta$, related to the dynamics of branching random walks. By analogy with the convolution operation associated to the random walk, for any pair $P,Q \in \mathbf{P}_\theta$, we denote by $P \circledast Q$ the distribution of the first generation of 
a branching random walk with reproduction law $Q$ and started from a random point measure distributed according to $P$. In other words, writing $\mu$ for a random point measure with law $P$, $\x$ for its ranked sequence of atoms, and $(\nu_j:j\in \N)$ for i.i.d. random point measures with law $Q$, then $P \circledast Q$ is the law of the random measure $\sum_{j \in \N} \tau_{x_j} \nu_j$. By a straightforward computation, there is the identity
\begin{equation}
 \label{eqn:multiplicatif}
 \int \crochet{\mu,\ex_\theta} P \circledast Q(\dd \mu) = \int \crochet{\mu,\ex_\theta} P (\dd \mu) \times \int \crochet{\nu,\ex_\theta} Q (\dd \nu),
\end{equation}
which ensures that $P \circledast Q \in \mathbf{P}_\theta$ for any $P,Q \in \mathbf{P}_\theta$ such that the right-hand side in \eqref{eqn:multiplicatif} is finite. 
We now study the regularity of this operator.

\begin{lemma}
\label{lem:continue++}
Let $(P_n: n \in \N)$ and $(Q_n:n \in \N)$ be two sequences in $\mathbf{P}_\theta$ such that
\[
 \lim_{n \to \infty} P_n = P \quad \text{and} \quad \lim_{n \to \infty} Q_n = Q \quad \text{in } \mathbf{P}_\theta.
\]
If 
\begin{equation}\label{eqn:cvEnManytoone}
\sup_{n\in \N} \int \crochet{\nu,\ex_\theta } Q_n(\dd \nu) <\infty, 
\end{equation}
and further, there exists a continuous norm-like function $g$ such that
\begin{equation}
 \sup_{n \in \N} \int \crochet{\mu,\ex_\theta g} P_n(\dd \mu) < \infty , \label{eqn:cvEnManytoone2}
\end{equation}
then
$$\lim_{n \to \infty} P_n \circledast Q_n = P \circledast Q\qquad \text{in }{\mathbf P}_{\theta}.$$
\end{lemma}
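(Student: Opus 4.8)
The plan is to prove the weak convergence $P_n\circledast Q_n\to P\circledast Q$ in $\mathbf{P}_\theta$ by establishing convergence of Laplace functionals. Since $\mathcal{P}_\theta$ is identified with a closed subset of the finite measures under the weak topology, the maps $\mu\mapsto\e^{-\crochet{\mu,\ex_\theta f}}$, with $f$ ranging over the nonnegative bounded \emph{Lipschitz} functions, are bounded and continuous on $\mathcal{P}_\theta$ and form a convergence-determining class for this topology (see \cite{Kall}). It thus suffices to show that, for each such $f$,
\[
 \int\e^{-\crochet{\mu,\ex_\theta f}}\,(P_n\circledast Q_n)(\dd\mu)\xrightarrow[n\to\infty]{}\int\e^{-\crochet{\mu,\ex_\theta f}}\,(P\circledast Q)(\dd\mu).
\]
Conditioning on the point measure drawn from the first coordinate and using the conditional independence of the i.i.d.\ reproductions, the branching structure of $\circledast$ factorizes the Laplace functional: setting
\[
 G_Q(x)\coloneqq\int\e^{-\crochet{\tau_x\nu,\ex_\theta f}}\,Q(\dd\nu),\qquad \ell_Q(x)\coloneqq-\e^{-\theta x}\log G_Q(x),
\]
a direct computation (using $\crochet{\tau_x\nu,\ex_\theta f}=\e^{\theta x}\crochet{\nu,\ex_\theta f(\cdot+x)}$) gives $\int\e^{-\crochet{\mu,\ex_\theta f}}\,(P\circledast Q)(\dd\mu)=\int\e^{-\crochet{\mu,\ex_\theta\ell_Q}}\,P(\dd\mu)$, and likewise for the pair $(P_n,Q_n)$. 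The problem is thereby reduced to passing to the limit in the right-hand side.

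The heart of the argument is the regularity and convergence of the offspring exponent $\ell_Q$. Jensen's inequality $\int\e^{-Y}\,\dd Q\ge\e^{-\int Y\,\dd Q}$ together with the bound $\crochet{\tau_x\nu,\ex_\theta f}\le\e^{\theta x}\norme{f}_\infty\crochet{\nu,\ex_\theta}$ yields $0\le\ell_Q(x)\le\norme{f}_\infty\int\crochet{\nu,\ex_\theta}\,Q(\dd\nu)$, so that by \eqref{eqn:cvEnManytoone} the family $(\ell_{Q_n})$ is \emph{uniformly bounded}; continuity of $\ell_Q$ follows by dominated convergence. I would then prove $\ell_{Q_n}\to\ell_Q$ locally uniformly. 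For fixed $x$, the map $\nu\mapsto\e^{-\crochet{\tau_x\nu,\ex_\theta f}}$ lies in $\calC_b(\mathcal{P}_\theta)$ because $\nu\mapsto\crochet{\nu,\ex_\theta f(\cdot+x)}$ is continuous on $\mathcal{P}_\theta$ (as $f(\cdot+x)\in\calC_b$), whence $Q_n\to Q$ gives the pointwise convergence $G_{Q_n}(x)\to G_Q(x)$. To upgrade this to local uniformity I would show that $(G_{Q_n})$ is equi-Lipschitz on each $[-a,a]$: using $|\e^{-s}-\e^{-s'}|\le|s-s'|$ and the Lipschitz property of $f$, one bounds $|G_{Q_n}(x)-G_{Q_n}(x')|$ by a constant multiple of $|x-x'|\int\crochet{\nu,\ex_\theta}\,Q_n(\dd\nu)$, uniformly in $n$ by \eqref{eqn:cvEnManytoone}; Arzelà--Ascoli then promotes pointwise convergence to uniform convergence on $[-a,a]$. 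A uniform lower bound $G_{Q_n}\ge\delta_a>0$ on $[-a,a]$, obtained from the tightness of $\crochet{\nu,\ex_\theta}$ under $Q_n$ (Markov's inequality and \eqref{eqn:cvEnManytoone}), renders $\log$ Lipschitz on the relevant range and transfers the uniform convergence to $\ell_{Q_n}\to\ell_Q$ on $[-a,a]$.

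To conclude I would invoke the Skorokhod representation theorem for $P_n\to P$ on the Polish space $\mathcal{P}_\theta$, realizing $\mu_n\to\mu_\infty$ almost surely with $\mu_n\sim P_n$ and $\mu_\infty\sim P$. As convergence in $\mathcal{P}_\theta$ is exactly weak convergence of the finite measures $\ex_\theta\mu_n$, it entails tightness and convergence of total masses $\crochet{\mu_n,\ex_\theta}\to\crochet{\mu_\infty,\ex_\theta}$; combined with the locally uniform, uniformly bounded convergence $\ell_{Q_n}\to\ell_Q$, the standard argument for integrating convergent test functions against weakly convergent measures gives $\crochet{\mu_n,\ex_\theta\ell_{Q_n}}\to\crochet{\mu_\infty,\ex_\theta\ell_Q}$ almost surely. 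Since the integrands are bounded by $1$, bounded convergence yields $\int\e^{-\crochet{\mu,\ex_\theta\ell_{Q_n}}}P_n(\dd\mu)\to\int\e^{-\crochet{\mu,\ex_\theta\ell_Q}}P(\dd\mu)$, which is the sought convergence of Laplace functionals, and the proof is complete.

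I expect the main obstacle to be the uniform-in-$n$ control of the second step: the equicontinuity estimate is what forces the restriction to \emph{uniformly continuous} test functions (hence the Lipschitz class above, which remains convergence-determining), and it relies essentially on the uniform first-moment bound \eqref{eqn:cvEnManytoone} on the reproduction laws. The norm-like hypothesis \eqref{eqn:cvEnManytoone2} enters more mildly: since $g$ may be taken bounded below, it guarantees $\sup_n\int\crochet{\mu,\ex_\theta}P_n(\dd\mu)<\infty$, so that by \eqref{eqn:multiplicatif} each $P_n\circledast Q_n$ genuinely lies in $\mathbf{P}_\theta$, and it secures (through Lemma~\ref{lem:compact}) the relative compactness needed to keep the argument within $\mathbf{P}_\theta$. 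A secondary point deserving care is the justification, via \cite{Kall}, that these Laplace functionals are convergence-determining for the weak topology on $\mathcal{P}_\theta$, as this is what allows us to bypass a direct tightness proof for $(P_n\circledast Q_n)$.
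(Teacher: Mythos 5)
Your argument is correct, but it follows a genuinely different route from the paper's. The paper's proof is a direct pathwise one: it applies Skorokhod's representation to \emph{both} sequences $(P_n)$ and $(Q_n)$, realizes $P_n\circledast Q_n$ as $\rho_n=\sum_j\tau_{x^n_j}\nu^n_j$, and proves $\crochet{\rho_n,\ex_\theta f}\to\crochet{\rho,\ex_\theta f}$ in probability by truncating the sum at the $N$ largest atoms of $\mu_n$ and killing the tail in $L^1$ via the multiplicativity identity \eqref{eqn:multiplicatif}; the norm-like function $g$ of \eqref{eqn:cvEnManytoone2} is exactly what makes the tail bound $\int\crochet{\mu,\ex_\theta\mathbf{1}_{(-\infty,-a)}}P_n(\dd\mu)$ small uniformly in $n$. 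You instead exploit the semigroup structure of $\circledast$ at the level of Laplace functionals, reducing everything to the locally uniform convergence of the one-dimensional ``offspring exponent'' $\ell_{Q_n}\to\ell_Q$, which you get from \eqref{eqn:cvEnManytoone} via an equi-Lipschitz bound and Arzel\`a--Ascoli, and you only need Skorokhod for $(P_n)$. Two remarks on what each approach buys. First, your proof in fact never uses \eqref{eqn:cvEnManytoone2} in an essential way: the tail control on the $P_n$ side is absorbed into the almost sure tightness of the realized sequence $(\ex_\theta\mu_n)$ coming from Skorokhod, so you are proving a slightly stronger statement than the lemma (only \eqref{eqn:cvEnManytoone} matters); this is consistent, since the paper's hypothesis \eqref{eqn:cvEnManytoone2} is a first-moment condition used only to make the truncation quantitative. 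Second, the price you pay is the reliance on the fact that Laplace functionals over bounded nonnegative Lipschitz test functions are convergence-determining for the \emph{weak} (not merely vague) topology on random finite measures; this is true and available in \cite{Kall}, but it is precisely the tightness-for-free step that the paper's more elementary argument establishes by hand, so you should pin down the exact reference (the relevant statement combines convergence of $\crochet{\mu_n,\ex_\theta f}$ for $f$ vanishing at infinity with joint convergence of total masses). All the computational steps you outline (the identity $\int\e^{-\crochet{\mu,\ex_\theta f}}(P\circledast Q)(\dd\mu)=\int\e^{-\crochet{\mu,\ex_\theta\ell_Q}}P(\dd\mu)$, the Jensen lower bound on $G_{Q_n}$, the equi-Lipschitz estimate, and the final exchange of limits) check out.
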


\begin{proof}
By Skorohod's representation theorem, we construct random point measures $\mu_n,\mu, \nu_n$ and $\nu$ in $\mathcal{P}_\theta$, with laws $P_n,P,Q_n$ and $Q$ respectively, and such that
\[
 \lim_{n \to \infty} \mu_n = \mu \quad \text{and} \quad \lim_{n \to \infty} \nu_n = \nu \quad \text{ a.s. in } \mathcal{P}_\theta.
\]
We introduce a sequence $\left(\left( \left(\nu^j_n\right)_{n\in\N},\nu^j\right) :j \in \N\right)$ of i.i.d. copies of $\left((\nu_n)_{n\in\N},\nu\right)$ that are further independent of $\left((\mu_n)_{n\in\N},\mu\right )$, and write
\[
 \rho_n = \sum_{j \in \N} \tau_{x^n_j} \nu^j_n \quad \text{and} \quad \rho = \sum_{j \in \N} \tau_{x_j} \nu^j,
\]
where $\x^n=(x^n_j: j\in\N)$ and $\x=(x_j: j\in \N)$ are the ranked sequence of atoms of $\mu_n$ and $\mu$, respectively. Then $\rho_n$ has law $P_n \circledast Q_n$, $\rho$ has law $P \circledast Q$, and we aim at proving that $\lim_{n\to \infty} \rho_n=\rho$ in probability in ${\mathcal P}_{\theta}$. That is, by an argument of separability, that for every function $f \in \mathcal{C}_b$,
\begin{equation} \label{eqn:goal}
\lim_{n\to \infty} \crochet {\rho_n,\ex_{\theta} f}= \crochet {\rho, \ex_{\theta}f} \qquad \text{in probability.}
\end{equation}
Without loss of generality, we may focus henceforth on the case $0\leq f \leq 1$. 

To start with, recall that convergence in $\mathcal{P}_\theta$ is stronger than pointwise convergence of the ranked sequences of atoms. Hence, for any fixed $k \in \N$ and any $f \in \mathcal{C}_b$, we have
\begin{equation}\label{eqn:basic}
 \lim_{n \to \infty} \crochet{ \tau_{x^n_k} \nu^n_k,\ex_\theta f}=\crochet{ \tau_{x_k} \nu_k,\ex_\theta f} \quad \text{a.s.}
\end{equation}
Next, fix a large integer $N$ and for every $n\in\N$ and $a>0$, introduce the event
$\Lambda_n(N,a)\coloneqq \{x^n_N< -a\}$. Plainly, on that event, 
 there is the inequality
$$0\leq \crochet{\rho_n,\ex_\theta f}-
\sum_{k=1}^N \crochet{ \tau_{x^n_k} \nu^n_k,\ex_\theta f} \leq \sum_{k=1}^{\infty} \crochet{ \tau_{x^n_k} \nu^n_k,\ex_\theta }{\mathbf 1}_{\{x^n_k<-a\}}.$$
By \eqref{eqn:multiplicatif}, the expectation of the right-hand side equals
$$
\int \crochet{\mu,\ex_\theta {\mathbf 1}_{(-\infty, -a)}} P_n(\dd \mu)\times 
\int \crochet{\nu,\ex_\theta} Q_n(\dd \nu).$$

By the elementary inequality, 
$$\crochet{\mu,\ex_\theta {\mathbf 1}_{(-\infty, -a)}}\leq \crochet{\mu,\ex_\theta g}/\inf\{g(x): x<-a\},$$
we see that \eqref{eqn:cvEnManytoone}, \eqref{eqn:cvEnManytoone2}
 and the fact that $g$ is norm-like ensure that for every $\varepsilon >0$, we can choose $a>0$ sufficiently large such that
 $$\E\left( \sum_{k=1}^{\infty} \crochet{ \tau_{x^n_k} \nu^n_k,\ex_\theta f}{\mathbf 1}_{\{x^n_k<-a\}}\right)\leq \varepsilon ^2\qquad \text{for all }n\in\N.$$
 {\it A fortiori}, by the Markov inequality, we have
 $$\P\left( \sum_{k=1}^{\infty} \crochet{ \tau_{x^n_k} \nu^n_k,\ex_\theta f}{\mathbf 1}_{\{x^n_k<-a\}}>\varepsilon \right)\leq \varepsilon \qquad \text{for all }n\in\N.$$

We next bound the probability of the complementary event $\Lambda_n(N,a)^c$ using again Markov's inequality: 
$$\P(x^n_N\geq -a)=\P(\mu_n([-a,\infty))\geq N)\leq \frac{\e^{\theta a}}{N} \E(\crochet{\mu_n,\ex_{\theta}}).$$
So by \eqref{eqn:cvEnManytoone2}, we may choose $N$ large enough so that
$\P(\Lambda_n(N,a))\geq 1-\varepsilon$ for all $n\in\N$.
To summarize, 
we have shown that for every $\varepsilon>0$, we can choose $N$ sufficiently large so that for all $n\in\N$
\[
\P\left( \crochet{\rho_n,\ex_\theta f} - \sum_{j=1}^N \crochet{ \tau_{x^n_j} \nu^n_j,\ex_\theta f} > \epsilon\right) \leq 2\varepsilon. 
\]

Since \eqref{eqn:basic} entails that for each fixed $N$, 
$$\lim_{n\to \infty} \sum_{j=1}^N \crochet{ \tau_{x^n_j} \nu^n_j,\ex_\theta f}= \sum_{j=1}^N \crochet{ \tau_{x_j} \nu_j,\ex_\theta f} \qquad \text{ a.s.} $$
and plainly
$$\lim_{N\to \infty} \sum_{j=1}^N \crochet{ \tau_{x_j} \nu_j,\ex_\theta f}=\crochet{\rho, \ex_\theta f} \qquad \text{ a.s.}, $$
we conclude that \eqref{eqn:goal} holds, which completes the proof.
\end{proof}

We are now able to establish Proposition \ref{thm:irpmToNbrw}.

\begin{proof} [Proof of Proposition \ref{thm:irpmToNbrw}]
We denote by $P\in \mathbf{P}_{\theta}$ the law of the infinitely ramified point measure $\mathcal{Z}$,
and introduce the space of $\circledast$-roots of $P$, viz.
\[
\mathbf{R}(P) = \left\{ Q \in \mathbf{P}_\theta : Q\circledast Q = P \right\}. 
\]
In words, $Q\in \mathbf{R}(P)$ if and only if the second generation of a branching random walk with reproduction law $Q$ has the same distribution as ${\mathcal Z}$. Note that $Q$ is not necessarily infinitely ramified, but the main step of this proof is to show there always exists an infinitely ramified law in $\mathbf{R}(P)$.

By the many-to-one formula in Lemma \ref{lem:manytoonePrimal}, for any measurable positive function $f$ and $Q \in \mathbf{R}(P)$, we have
\[
  \int \crochet{\nu, \ex_\theta g} Q(\dd \nu) = \e^{\kappa(\theta)/2} \E(g(\xi_{1/2})).
\]
Let $g$ be any continuous norm-like function with $\E(g(\xi_{1/2}))<\infty$, so that
\begin{equation}\label{e:momQ}
  \sup_{Q \in \mathbf{R}(P)} \int \crochet{\nu, \ex_\theta g} Q(\dd \nu)  <\infty.
\end{equation}
Note that $\mathbf{R}(P)$ is non-empty (as $\mathcal{Z}$ is an infinitely ramified point measure), and closed. Indeed, if $(Q_n)_{n\in\N}$ is a sequence in $\mathbf{R}(P)$
with $\lim_{n\to \infty} Q_n=Q$ in~$\mathbf{P}_\theta$, then from \eqref{e:momQ}, we can apply Lemma \ref{lem:continue++}, hence $Q\in \mathbf{R}(P)$.
Further, using again \eqref{e:momQ}, we see that $\mathbf{R}(P)$ is compact by Lemma~\ref{lem:compact}. 

More generally, writing $Q^{\circledast k}$ for the distribution of the $k$-th generation of a branching random walk with reproduction law $Q$, the same argument shows that if we define for every $n\in\N$
\[
\mathbf{R}_n(P) = \left\{ Q^{\circledast 2^n}: Q\in \mathbf{P}_\theta \text{ and } Q^{\circledast 2^{n+1}} = P \right\},
\]
then 
\[\mathbf{R}(P)=\mathbf{R}_0(P)\supseteq \mathbf{R}_1(P) \supseteq \ldots \supseteq\mathbf{R}_n(P) \supseteq \ldots
\]
form a nested sequence of non-empty compact sets in $\mathbf{P}_{\theta}$. By Cantor's intersection theorem, their intersection is not empty.

This proves that $\mathbf{R}(P)$ always contains the law of some infinitely ramified point measure, say $P^{\circledast 1/2}$, and by iteration,
we construct for every $n\in\N$ the law of an infinitely ramified point measure $P^{\circledast 2^{-n}}$ such that
$$P^{\circledast 2^{-n}} \circledast P^{\circledast 2^{-n}}=P^{\circledast 2^{-n+1}}.$$

For every $n\in \N$, we then consider a branching random walk indexed by $2^{-n}\Z_+$ with reproduction law $P^{\circledast 2^{-n}}$, say $(Z^{(n)}_t: t\in 2^{-n}\Z_+)$. 
The restriction of $Z^{(n+1)}$ to $2^{-n}\Z_+$ has the same law as $Z^{(n)}$, and we can thus construct by Kolmogorov's extension theorem a process $(Z_t: t \in D)$ 
indexed by dyadic rational times, such that for every $n\in\N$, the restriction of $Z$ to $2^{-n}\Z_+$ has the same law as $Z^{(n)}$. That is, $Z$ is a nested branching random walk, 
and by construction, $Z_1$ has the same law as ${\mathcal Z}$. 
\end{proof}

\begin{remark}
With a similar reasoning, one can also prove for instance that an integer-valued random variable having for all $n$ the same distribution as the $n$-th generation of some Galton-Watson branching process can be viewed as the value at time $1$ of a continuous-time branching process. 
\end{remark}

We now conclude this section with the analogue of a well-known result in the theory of infinitely divisible laws: if $(S^{(n)}: n \in \N)$ is a sequence of random walks such that $S^{(n)}_n$ converges, say in distribution, then the limit is infinitely divisible. This is also the case for branching random walks, when we assume convergence in the $\mathcal{P}_\theta$ topology.
\begin{corollary}
\label{prop:22}
Let $(Z^{(n)}:n\in \N)$ be a sequence of branching random walks such that for some $\theta \geq 0$, the law of $Z^{(n)}_n$ converges in ${\mathbf P}_{\theta}$ toward the law of some random point measure ${\mathcal Z}$. If further
\begin{equation}
 \label{eqn:manytoonecv}
 \lim_{n \to \infty} \E(\crochet{Z^{(n)}_n, \ex_\theta }) = \E(\crochet{{\mathcal Z},\ex_\theta })\in(0,\infty),
\end{equation}
then ${\mathcal Z}$ is an infinitely ramified point measure.
\end{corollary}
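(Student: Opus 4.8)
The plan is to prove that for every fixed $m\in\N$ the law $P$ of $\mathcal{Z}$ admits an $m$-th $\circledast$-root, i.e.\ $P=R^{\circledast m}$ for some $R\in\mathbf{P}_\theta$; since $R$ is then the reproduction law of a branching random walk whose $m$-th generation has the law of $\mathcal{Z}$, and $m$ is arbitrary, this is exactly the assertion that $\mathcal{Z}$ is infinitely ramified. Write $Q_n\in\mathbf{P}_\theta$ for the reproduction law of $Z^{(n)}$, so that $Z^{(n)}_k$ has law $Q_n^{\circledast k}$ and, by hypothesis, $Q_n^{\circledast n}\to P$ in $\mathbf{P}_\theta$. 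The starting point is the elementary additivity of the branching operation, $Q^{\circledast(a+b)}=Q^{\circledast a}\circledast Q^{\circledast b}$, which along the subsequence $n=m\ell$ gives $Q_{m\ell}^{\circledast m\ell}=R_\ell^{\circledast m}$ with $R_\ell\coloneqq Q_{m\ell}^{\circledast \ell}$. Thus $R_\ell^{\circledast m}\to P$, and it suffices to show that $(R_\ell)_\ell$ is relatively compact in $\mathbf{P}_\theta$ and that $\circledast$ can be passed to the limit: a convergent subsequence $R_{\ell_k}\to R$ would then give $P=R^{\circledast m}$ by continuity of $Q\mapsto Q^{\circledast m}$.

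To control the $R_\ell$ I would pass to the associated tilted random walks. Applying the many-to-one formula (as in Lemma \ref{lem:manytoonePrimal}, but now for the reproduction law $Q_n$) produces a random walk $S^{(n)}$, with step law the $\theta$-tilted intensity of $Q_n$, such that $\E(\crochet{Z^{(n)}_j,\ex_\theta f})=a_n^{\,j}\,\E(f(S^{(n)}_j))$ for every bounded measurable $f$, where $a_n\coloneqq\int\crochet{\nu,\ex_\theta}Q_n(\dd\nu)$; by \eqref{eqn:multiplicatif} one has $a_n^{\,n}=\E(\crochet{Z^{(n)}_n,\ex_\theta})$. The moment hypothesis \eqref{eqn:manytoonecv} forces $a_n^{\,n}\to c\coloneqq\E(\crochet{\mathcal{Z},\ex_\theta})\in(0,\infty)$, hence $a_n\to1$ and $a_{m\ell}^{\,j\ell}\to c^{\,j/m}$. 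Moreover, convergence in $\mathbf{P}_\theta$ together with the convergence of total masses in \eqref{eqn:manytoonecv} upgrades, through a uniform-integrability argument, to $\E(\crochet{Z^{(n)}_n,\ex_\theta f})\to\E(\crochet{\mathcal{Z},\ex_\theta f})$ for all $f\in\mathcal{C}_b$; dividing by $a_n^{\,n}$, this identifies the limit in distribution of $S^{(n)}_n$ as the $\theta$-tilted intensity $m_\theta$ of $\mathcal{Z}$. Since the $S^{(n)}$ form an i.i.d.\ triangular array, Khinchin's theorem shows that $m_\theta$ is infinitely divisible; letting $\xi$ denote the corresponding Lévy process, the classical functional limit theorem for such arrays gives that $S^{(n)}_{\lfloor tn\rfloor}$ converges in distribution to $\xi_t$ for every fixed $t\geq0$, and in particular $S^{(m\ell)}_{j\ell}$ converges in distribution to $\xi_{j/m}$ for each $j\in\{1,\dots,m\}$.

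The remaining steps are then mechanical. The finite union of the tight families $(S^{(m\ell)}_{j\ell})_\ell$, $1\leq j\leq m$, is tight, hence admits a continuous norm-like $g$ with $\sup_{\ell,j}\E(g(S^{(m\ell)}_{j\ell}))<\infty$; by the many-to-one formula, $\int\crochet{\mu,\ex_\theta g}\,R_\ell^{\circledast j}(\dd\mu)=a_{m\ell}^{\,j\ell}\,\E(g(S^{(m\ell)}_{j\ell}))$ is bounded in $\ell$ for each such $j$. Taking $j=1$, the proof of Lemma \ref{lem:compact} shows that $(R_\ell)_\ell$ is tight, hence relatively compact by Prokhorov's theorem, so that some subsequence converges, $R_{\ell_k}\to R\in\mathbf{P}_\theta$. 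Feeding the bounds for $j=1,\dots,m-1$ into Lemma \ref{lem:continue++} — whose hypotheses are precisely $\sup_k\int\crochet{\nu,\ex_\theta}R_{\ell_k}(\dd\nu)<\infty$, valid since $a_{m\ell}^{\,\ell}\to c^{1/m}$, and the norm-like bound on $R_{\ell_k}^{\circledast j}$ — I would pass to the limit inductively, obtaining $R_{\ell_k}^{\circledast j}\to R^{\circledast j}$ for $j=1,\dots,m$. In particular $R_{\ell_k}^{\circledast m}\to R^{\circledast m}$, while $R_{\ell_k}^{\circledast m}=Q_{m\ell_k}^{\circledast m\ell_k}\to P$, whence $P=R^{\circledast m}$, as desired.

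I expect the main obstacle to lie in the passage between the two worlds carried out in the second paragraph: transferring the $\mathbf{P}_\theta$-convergence of the point measures into a genuine distributional convergence of the tilted walks $S^{(n)}$ — which is where the moment hypothesis \eqref{eqn:manytoonecv} is indispensable, via the uniform-integrability upgrade — and then invoking the classical functional limit theorem to secure tightness of the intermediate-time sums $S^{(m\ell)}_{j\ell}$. Once this tightness is available, the point-measure compactness of Lemma \ref{lem:compact} and the continuity of the branching convolution in Lemma \ref{lem:continue++} reduce everything to bookkeeping. A minor technical point to verify is that a single norm-like $g$ can be chosen to accommodate all finitely many families $(S^{(m\ell)}_{j\ell})_\ell$ simultaneously, which is immediate since a finite union of tight families is tight.
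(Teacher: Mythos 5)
Your proposal is correct and follows essentially the same route as the paper: upgrade the convergence to $\E(\crochet{Z^{(n)}_n,\ex_\theta f})\to\E(\crochet{\mathcal{Z},\ex_\theta f})$ by uniform integrability, pass to the tilted random walks via many-to-one to deduce infinite divisibility and the convergence $S^{(m\ell)}_{\ell}\to\xi_{1/m}$, use the resulting tightness together with Lemma~\ref{lem:compact} to extract a convergent subsequence of the laws of $Z^{(m\ell)}_\ell$, and identify the limit as an $m$-th $\circledast$-root of $P$ via Lemma~\ref{lem:continue++}. Your only departure is presentational: you spell out the induction on $j$ needed to iterate Lemma~\ref{lem:continue++} (with a single norm-like $g$ controlling all intermediate generations), which the paper leaves implicit.
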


\begin{proof} The convergence in law of $Z^{(n)}_n$ toward ${\mathcal Z}$ implies that for every $f\in{\mathcal C}_b$, 
$\lim_{n\to \infty} \crochet{Z^{(n)}_n, \ex_{\theta}f} = \crochet{{\mathcal Z}, \ex_{\theta}f}$ in distribution, and by standard arguments of uniform integrability, \eqref{eqn:manytoonecv} then ensures that
$$\lim_{n\to \infty} \E(\crochet{Z^{(n)}_n, \ex_{\theta}f} )= \E(\crochet{{\mathcal Z}, \ex_{\theta}f}). $$

We set $A =\E(\crochet{{\mathcal Z},\ex_\theta }) > 0$ and introduce a random variable $\xi$ such that 
\[
 A \E(f(\xi)) = \E(\crochet{\mathcal{Z},\ex_\theta f})\qquad \text{for all }f \in \mathcal{C}_b.
\]
On the other hand, using the many-to-one formula, we get that for every $n \in \N$, there exists $\kappa_n(\theta) \in \R$ and a random walk $S^{(n)}$ such that
\[
 \E\left( \crochet{Z_n,\ex_\theta f} \right) = \e^{n\kappa_n(\theta)} \E(f(S^{(n)}_n)).
\]
Hence we have $\lim_{n \to \infty} n \kappa_n(\theta) = \ln A$ and $\lim_{n \to \infty} S^{(n)}_n = \xi$ in law. In particular, it follows that $\xi$ is infinitely divisible, hence there exists a L\'evy process $(\xi_t : t \geq 0)$ such that $\xi_1 = \xi$ in law.

This also yields that for all $k \in \N$, 
\begin{equation}
 \label{eqn:mtocv}
 \lim_{n \to \infty} n \kappa_{nk}(\theta) = \frac{\ln A}{k} \quad \text{and} \quad \lim_{n \to \infty} S^{(kn)}_n = \xi_{1/k} \text{ in law.}
\end{equation}
In particular, by Prohorov's theorem, the sequence $(S^{(kn)}_n : n \in \N)$ is tight, and there exists a positive continuous norm-like function $g$ such that
\[
 \sup_{n \in \N} \E\left( \crochet{Z^{(nk)}_n , \ex_\theta g} \right) \leq \sup_{n \in \N} \e^{n \kappa_{nk}(\theta)} \times \sup_{n \in \N} \E(g(S^{(nk)}_n)) < \infty.
\]
Therefore $(Z^{(nk)}_n : n \in \N)$ is tight in ${\mathbf P}_{\theta}$, thanks to Lemma \ref{lem:compact}, and we can extract a subsequence that converges in law, say toward $\tilde{Z}^k$. Moreover, by Lemma~\ref{lem:continue++}, writing $P^{k}_n$ for the law of $Z^{(nk)}_n$, $\tilde{P}^k$ for the law of $\tilde{Z}^k$ and $P$ for the law of $\mathcal{Z}$, we get that
\[
 P = \lim_{n \to \infty} (P^k_n)^{\circledast k} = (\tilde{P}^k)^{\circledast k}.
\]
This result being true for all $k \in \N$, we conclude that $\mathcal{Z}$ is infinitely ramified.
\end{proof}

\section{C\`adl\`ag extension of nested branching random walks and the strong branching property}
\label{sec:infinitelyDivisibleFirstProperties}

Throughout this section, $Z=(Z_t: t\in D)$ denotes a nested branching random walk with $\mathcal{Z}\, \egaldistr \,Z_1$.  In particular, its discrete time skeletons are branching random walks; recall also that we assume \eqref{eqn:integrabilityInfinitelyRamified}.  As in the previous section, we denote by $\xi$ a L\'evy process with characteristic exponent $\Psi$ and reformulate Lemma \ref{lem:manytoonePrimal} as follows.
\begin{lemma} [Many-to-one formula]
\label{lem:manytooneZ} For all $t \in D$ and $f: \R\to \R_+$ measurable, we have
\[
 \E\left( \crochet{Z_t,f} \right) = \E\left( \e^{-\theta \xi_t + t \kappa(\theta)} f(\xi_t) \right).
\]
\end{lemma}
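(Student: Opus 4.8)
The plan is to reduce the claim directly to the discrete-time many-to-one formula of Lemma~\ref{lem:manytoonePrimal}, by choosing the appropriate discrete-time skeleton of $Z$. The key observation is that every $t \in D$ admits a representation $t = k 2^{-n}$ with $k,n$ non-negative integers, and that by property (B) the $2^n$-skeleton $(Z_{m 2^{-n}} : m \in \Z_+)$ is a branching random walk started from $\delta_0$. So the whole point is that $Z_t$, for dyadic $t$, is the $k$-th generation of one of these skeletons, and Lemma~\ref{lem:manytoonePrimal} already tells us the intensity of the $j$-th generation of any branching random walk whose $n$-th generation has the law of $\mathcal{Z}$.

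First I would dispose of the trivial case $t = 0$, where $Z_0 = \delta_0$ and $\xi_0 = 0$, so that both sides reduce to $f(0)$. For $t > 0$ I fix a representation $t = k 2^{-n}$ with $k \in \N$ and note the crucial feature of this particular skeleton: it reaches the law of $\mathcal{Z}$ after exactly $2^n$ steps, since $Z_{2^n \cdot 2^{-n}} = Z_1 \egaldistr \mathcal{Z}$ is the standing assumption of this section. Lemma~\ref{lem:manytoonePrimal} therefore applies to the branching random walk $(Z_{m 2^{-n}} : m \in \Z_+)$ with the integer there named ``$n$'' taken to be $2^n$. Applying it with ``$j$'' $= k$ yields
\[
\E\left( \crochet{Z_t, f} \right) = \e^{\kappa(\theta) k / 2^n}\, \E\left( \e^{-\theta \xi_{k/2^n}} f(\xi_{k/2^n}) \right) = \E\left( \e^{-\theta \xi_t + t \kappa(\theta)} f(\xi_t) \right),
\]
because $k/2^n = t$, which is precisely the desired identity.

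I do not expect a genuine obstacle, as this is essentially a change of time-index in Lemma~\ref{lem:manytoonePrimal}. The only points worth a line of justification are consistency ones: the L\'evy process $\xi$ on the right-hand side is the single fixed process determined by the law of $\mathcal{Z}$ through \eqref{eq:psi}, the same one for every skeleton we use, and the right-hand side depends only on $t$ and not on the chosen representation $t = k 2^{-n}$, so the conclusion is well defined regardless of which pair $(k,n)$ we pick.
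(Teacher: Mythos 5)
Your proof is correct and follows exactly the route the paper intends: the paper states this lemma as a direct reformulation of Lemma~\ref{lem:manytoonePrimal} applied to the dyadic skeleton $(Z_{m2^{-n}}: m\in\Z_+)$, whose $2^n$-th generation has the law of $\mathcal{Z}$, and you have simply written out that substitution (with the lemma's ``$n$'' taken to be $2^n$ and ``$j$'' taken to be $k$) together with the routine checks that the result is independent of the representation $t=k2^{-n}$ and that $t=0$ is trivial.
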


We now prove that the nested branching random walk $Z$ possesses a c\`adl\`ag extension. Recall that $(\calF_t)_{t\in D}$ denotes its canonical filtration, and introduce its right-continuous enlargement
\[
\calF^+_t\coloneqq \bigcap_{s\in D, s>t}\calF_s\,,\qquad t\in\R_+.
\]

\begin{proposition}
\label{prop:cadlag}
Almost surely, there exists a unique extension of $(Z_t: t\in D)$ to a c\`adl\`ag process $(Z_t: t\in \R_+)$ with values in ${\mathcal P}_{\theta}$, and which is further adapted to the filtration $(\calF^+_t)_{t\geq 0}$.

The many-to-one formula of Lemma \ref{lem:manytooneZ} holds more generally for $t\in\R_+$. 
\end{proposition}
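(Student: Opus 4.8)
The plan is to exploit a one-parameter family of complex exponential martingales to regularize the paths, to upgrade path-regularity of countably many ``Fourier coordinates'' to the full measure via tightness, and finally to settle stochastic continuity so that the right-limit process genuinely extends $Z$.

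First, for each $r\in\R$, set $M^{(r)}_t := \e^{-t\kappa(\theta+ir)}\crochet{Z_t,\ex_{\theta+ir}}$ for $t\in D$. Applying Lemma~\ref{lem:manytooneZ} to $f=\ex_{ir}$ (extended by linearity to the real and imaginary parts) and using $\E(\e^{ir\xi_t})=\e^{t\Psi(r)}$ gives $\E(\crochet{Z_t,\ex_{\theta+ir}})=\e^{t\kappa(\theta+ir)}$; combined with the branching property (B), a direct conditioning shows that $(M^{(r)}_t:t\in D)$ is a complex-valued martingale for $(\calF_t)$. The crucial bound is $|M^{(r)}_t|\le M^{(0)}_t$, where $M:=M^{(0)}$ is a non-negative martingale with $\E(M_t)=1$; since $M_s=\E(M_T\mid\calF_s)$ for dyadic $s\le T$, the family $\{M_s:s\in D\cap[0,T]\}$ is uniformly integrable, and therefore so is $\{M^{(r)}_s:s\in D\cap[0,T]\}$ for each $r$. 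Doob's regularization theorem then ensures that, almost surely, the left and right limits $\lim_{s\uparrow t,\,s\in D}M^{(r)}_s$ and $\lim_{s\downarrow t,\,s\in D}M^{(r)}_s$ exist for all $t\ge0$. Fixing a countable dense set $R\subset\R$ containing $0$ and intersecting the corresponding null events, these limits exist simultaneously for all $r\in R$.

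Next I convert the regularity of these Fourier coordinates into weak convergence of the weighted measures $\ex_\theta Z_s$. The key input is the tail estimate furnished by Lemma~\ref{lem:manytooneZ}: for $a>0$,
\[
\E\!\left(\crochet{Z_s,\ex_\theta \ind{|x|>a}}\right)=\e^{s\kappa(\theta)}\,\P(|\xi_s|>a),
\]
and since the L\'evy process $\xi$ does not explode on compact time-intervals, $\sup_{s\le T}\P(|\xi_s|>a)\le \P(\sup_{u\le T}|\xi_u|>a)\to0$ as $a\to\infty$. This delivers tightness of the weighted measures along dyadic times, so that on a dyadic sequence $s_k\downarrow t$ the convergence of $\crochet{Z_{s_k},\ex_{\theta+ir}}$ for every $r\in R$, together with convergence of the total mass $\crochet{Z_{s_k},\ex_\theta}$, forces $\ex_\theta Z_{s_k}$ to converge weakly (after normalisation, by L\'evy's continuity theorem; the degenerate case of vanishing mass gives the limit $\varnothing$). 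Hence, almost surely, $Z$ admits left and right limits in ${\mathcal P}_\theta$ at every real time.

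It remains to check that the right-limit process $\tilde Z_t:=\lim_{s\downarrow t,\,s\in D}Z_s$ is c\`adl\`ag and agrees with $Z$ on $D$; granting this, adaptedness to $(\calF^+_t)$ is immediate and uniqueness follows since two c\`adl\`ag functions coinciding on the dense set $D$ are equal. Agreement on $D$ is exactly stochastic continuity of $Z$ along dyadic times, which by (B) reduces to proving that $Z_h\to\delta_0$ in probability in ${\mathcal P}_\theta$ as $h\downarrow0$ along $D$. I expect this to be the main obstacle: the mean weighted measure has total mass $\e^{h\kappa(\theta)}\to1$ and concentrates at $0$ by stochastic continuity of $\xi$, yet because \eqref{eqn:integrabilityInfinitelyRamified} only supplies a first moment of the $\theta$-weighted mass, the usual $L^2$ concentration is unavailable. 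I would instead establish convergence in distribution to the constant $\delta_0$ directly, using the uniform integrability of the additive martingale $M$ to control the total mass and the tail estimate above to preclude escape of mass. Finally, the many-to-one formula extends to all $t\ge0$ by choosing dyadic $s\downarrow t$ and passing to the limit in Lemma~\ref{lem:manytooneZ}, invoking right-continuity of $\xi$ and the same uniform integrability to justify the interchange.
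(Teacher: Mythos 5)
Your overall architecture coincides with the paper's: both proofs work with the complex exponential martingales $M_t({r})=\e^{-t\kappa(\theta+i{r})}\crochet{Z_t,\ex_{\theta+i{r}}}$ indexed by dyadic times, regularize them, and then use L\'evy's continuity theorem to translate convergence of Fourier coordinates back into convergence in ${\mathcal P}_{\theta}$. The decisive difference is that the paper treats ${r}\mapsto M_t({r})$, restricted to $[-K,K]$, as a \emph{single} martingale with values in the separable Banach space $\calC([-K,K],\C)$ and invokes a regularization theorem for Banach-space-valued martingales (Theorem 3 of Brooks--Dinculeanu). That theorem delivers one-sided limits in the uniform norm, so the limiting Fourier transform is automatically continuous in ${r}$ and L\'evy's theorem applies with no tightness argument; it also directly produces a c\`adl\`ag extension adapted to $(\calF^+_t)$. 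You instead regularize each scalar martingale separately for ${r}$ in a countable set $R$, and this is where your proof develops two genuine gaps.

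First, the tightness step. To pass from convergence of $\crochet{Z_{s_k},\ex_{\theta+i{r}}}$ for ${r}\in R$ to weak convergence of $\ex_\theta Z_{s_k}$, you need the limit to be continuous in ${r}$, equivalently almost sure tightness of the measures along the (random, since $t$ ranges over all of $\R_+$) sequence $s_k\downarrow t$. Your justification is the expectation identity $\E\bigl(\crochet{Z_s,\ex_\theta \ind{|x|>a}}\bigr)=\e^{s\kappa(\theta)}\P(|\xi_s|>a)$. This controls each \emph{fixed} $s$, but what is needed is a pathwise bound uniform over $s\in D\cap[0,T]$; Fatou's lemma only bounds the $\liminf_k$ of $\crochet{Z_{s_k},\ex_\theta\ind{|x|>a}}$, not the $\limsup_k$, and no maximal inequality is supplied. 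Second, the agreement of the right-limit process with $Z$ on $D$. You correctly identify that this amounts to $Z_h\to\delta_0$ in probability as $h\downarrow 0$ in $D$, and you explicitly defer it (``I expect this to be the main obstacle''), offering only a heuristic. It is not a routine verification: under \eqref{eqn:integrabilityInfinitelyRamified} one only has $\E(\crochet{Z_h,\ex_\theta})\to 1$, which does not yield convergence in probability, and the downward martingale limit $M_{0+}=\E(M_1\mid\calF^+_0)$ equals $1$ a.s. only if a Blumenthal-type zero-one law holds for $\calF^+_0$, which you have not established. The paper resolves both points at once by the cited vector-valued regularization theorem; as written, your proposal does not close either of them, so it does not yet prove the proposition. (The final extension of the many-to-one formula is unproblematic once the above is settled: the paper obtains it from $\E(M_t({r}))=1$ for all real $t$ plus Fourier inversion, which is essentially your limiting argument.)
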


\begin{proof} 
Recall from L\'evy's theorem that a sequence $(m_n: n\in\N)$ of finite measures converges weakly if and only if the sequence of Fourier transforms $\hat m_n: {r} \mapsto \crochet{m_n, \ex_{i{r}}}$ converges pointwise to some continuous function $\hat m$. Then $\hat m$ is the Fourier transform of a finite measure $m$ and $\lim_{n\to \infty} m_n=m$ weakly. This shows that a sequence $(\mu_n: n\in\N)$ in ${\mathcal P}_{\theta}$ possesses a limit in 
${\mathcal P}_{\theta}$ if and only if the sequence 
$(\crochet{\mu_n,\ex_{\theta+i{r}}}: n\in \N)$ converges for every ${r}\in \R$ and the limit is a continuous function of ${r}$.

Recall also that for every $t\in D$, we have
$$\E\left(\crochet{Z_{t},\ex_{\theta+i {r}}}\right)= \exp(t\kappa(\theta+i {r})).$$
Using the branching property (B), we see that the process 
\[
M_t({r}) = \e^{-t\kappa(\theta + i{r})}\crochet{Z_t,\ex_{\theta+i{r}}}, \qquad t\in D
\]
is a martingale in dyadic rational times, that is
\[
 \E\left( M_{t}({r}) \middle| \calF_s \right) = M_s({r}) \text{ a.s. for every }  s \leq t, \ s,t\in D.
\]
Further, for each fixed $t\in D$, $M_t(\cdot)$ is the Fourier transform of a random finite measure on $\R$ and is thus continuous a.s. 

Fix $K > 0$ arbitrary, and write $M_t^K$ for the restriction to ${r}\in[-K,K]$ of the function ${r} \mapsto M_t({r})$. So $(M_t^K: t\in D)$ is a martingale in dyadic rational times, taking values in the separable Banach space $\calC([-K,K],\C)$, endowed with the topology of the uniform convergence, and thus possesses a.s. a unique c\`adl\`ag extension $(M_t^K: t\in \R_+)$, which is then a martingale in real time for the filtration $\mathcal{F}_t^+$. See for instance Theorem 3 in \cite{BrDi}.

Plainly, for $K<K'$, $M_t^K$ coincides with the restriction of $M_t^{K'}$ to $[-K,K]$, thus we can define unequivocally
$M_t({r})=M_t^K({r})$ for an arbitrary $K>|{r}|$. The resulting process $(M_t(\cdot): t\in\R_+)$ has a.s. c\`adl\`ag paths in ${\mathcal C}(\R,\C)$, endowed with the topology of uniform convergence on compact intervals. This establishes our first claim. 

Finally, by the martingale property of $(M_t({r}): t\in \R_+)$, we have that, for every $t\in \R_+$:
$$\E(\crochet{Z_t, \ex_{\theta+i{r}}})=\e^{t\kappa(\theta + i{r})}\E(M_t({r}))=\e^{t(\kappa(\theta)+ \Psi({r}))}.
$$
Hence the many-to-one formula of Lemma \ref{lem:manytooneZ} holds with $f=\ex_{\theta+i{r}}$, for all $t\in\R_+$ and ${r}\in\R$. The proof is completed by Fourier inversion.
\end{proof}

Our next goal is to establish a stronger version of the branching property.

\begin{proposition}
\label{prop:strongbranchingproperty}
Let $T$ be an a.s. finite $(\calF^+_t)$-stopping time. On a suitable enlargement of the underlying probability space, there exists an i.i.d. sequence $(Z^n_t: t\in\R_+)_{n\in\N}$ of copies of $(Z_t: t\in\R_+)$, which is independent of $\calF^+_T$, such that
 almost surely
\[
 \forall t \in \R_+, \ Z_{T + t} = \sum_{n=1}^{\infty} \tau_{x_n} Z^n_t,
\]
with $Z_T=\x=(x_1,x_2, \ldots)$. 
\end{proposition}

\begin{remark}
This result, while significantly stronger than assumption (B), is however not the strongest version of the branching property one can look for. In fact, one could establish a version for ``stopping lines'', in the vein of Chauvin~\cite{Chauvin}. But to state this result, one first needs a precise description of the genealogy and the trajectory of individuals, which we are lacking so far. Nonetheless, this result can be proved for branching L\'evy processes, therefore Theorem \ref{thm:main} implies such stronger version of the branching property holds for~$Z$.
\end{remark}

The rest of this section is devoted to the proof of Proposition \ref{prop:strongbranchingproperty}, which relies on a variation of Feller property that we now state. Let $(Z^n_t: t\in\R_+)_{n\in\N}$ denote a sequence of i.i.d. copies of $(Z_t: t\in\R_+)$. For each point measure $\mu\in {\mathcal P}_{\theta}$ and $t\in\R_+$, we consider the random point measure 
$${Y_t}({\mu})\coloneqq \sum_{n\in\N}\tau_{x_n}Z^n_t,$$
where $\x=(x_n: n\in\N)=\mu$.
One checks immediately that $\E (\crochet{{Y_t}({\mu}),\ex_{\theta}})<\infty$, so ${Y_t}({\mu})\in {\mathcal P}_{\theta}$ a.s., and it follows readily from  Lemma~\ref{lem:continue++} that the dependence in $\mu$ is continuous.

\begin{lemma}\label{LFeller}
With the notation above, for every fixed $t\in\R_+$, the process $(Y_t(\mu): \mu\in {\mathcal P}_{\theta})$ is continuous in probability.
\end{lemma}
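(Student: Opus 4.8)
The plan is to reduce the statement to the convergence \eqref{eqn:goal} already established in the course of proving Lemma~\ref{lem:continue++}, applied to the degenerate choice $P_n=\delta_{\mu_n}$ and $Q_n=Q$, where $Q$ denotes the law of $Z_t$. Recall that $\mathcal{P}_\theta$ is Polish, so continuity in probability is a statement about sequences: I fix a sequence $\mu_n\to\mu$ in $\mathcal{P}_\theta$ and must show that $Y_t(\mu_n)\to Y_t(\mu)$ in probability in $\mathcal{P}_\theta$. By the same separability argument used in the proof of Lemma~\ref{lem:continue++}, this amounts to checking that for every $f\in\mathcal{C}_b$ with $0\le f\le 1$ one has $\crochet{Y_t(\mu_n),\ex_\theta f}\to\crochet{Y_t(\mu),\ex_\theta f}$ in probability. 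Writing $\x^n$ for the ranked atoms of $\mu_n$ and letting the shared copies $Z^k_t$ play the role of the $\nu^j_n$, the random measure $Y_t(\mu_n)=\sum_{k}\tau_{x^n_k}Z^k_t$ is exactly the object called $\rho_n$ in that proof.

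Next I would verify the two moment hypotheses. The bound \eqref{eqn:cvEnManytoone} is immediate here, since $Q_n\equiv Q$ and, by the many-to-one formula of Lemma~\ref{lem:manytooneZ}, $\int\crochet{\nu,\ex_\theta}Q(\dd\nu)=\E(\crochet{Z_t,\ex_\theta})=\e^{t\kappa(\theta)}<\infty$. For \eqref{eqn:cvEnManytoone2}, note that $\mu_n\to\mu$ in $\mathcal{P}_\theta$ means precisely that the finite measures $\ex_\theta.\mu_n$ converge weakly to $\ex_\theta.\mu$; a weakly convergent sequence of finite measures has bounded total mass and is tight, so by the criterion recalled just before Lemma~\ref{lem:compact} (Meyn--Tweedie) there is a continuous norm-like function $g$ with $\sup_n\crochet{\mu_n,\ex_\theta g}<\infty$. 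As $P_n=\delta_{\mu_n}$, this is exactly \eqref{eqn:cvEnManytoone2}.

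The one point requiring care --- and the only genuine obstacle --- is that Lemma~\ref{lem:continue++} is phrased as a convergence of laws, whereas continuity in probability is a statement about the fixed coupling given by the single sequence $(Z^n_t)_{n\in\N}$. However, this coupling is already the one that the proof of Lemma~\ref{lem:continue++} produces: there the Skorohod representation is invoked only to realise the convergences $\mu_n\to\mu$ and $\nu_n\to\nu$ almost surely, and in the present situation both hold automatically, since $\mu_n\to\mu$ is deterministic and the reproduction copies are literally shared ($\nu^j_n=\nu^j=Z^j_t$ for every $n$). Consequently, re-running that argument verbatim --- the termwise almost sure convergence of the first $N$ summands as in \eqref{eqn:basic}, together with the tail estimate controlled by the two moment bounds just checked (the tail expectation factorises via \eqref{eqn:multiplicatif} into $\E(\crochet{Z_t,\ex_\theta})$ times $\crochet{\mu_n,\ex_\theta\mathbf{1}_{(-\infty,-a)}}$, which is made uniformly small using $g$) --- yields \eqref{eqn:goal} for the canonical coupling. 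This is precisely the asserted convergence in probability, completing the proof.
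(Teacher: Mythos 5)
Your proof is correct and follows essentially the same route as the paper: produce a norm-like function $g$ with $\sup_n \crochet{\mu_n,\ex_\theta g}<\infty$ via Prokhorov's theorem, and then apply Lemma~\ref{lem:continue++} with $P_n=\delta_{\mu_n}$ and $Q_n$ equal to the law of $Z_t$. Your additional observation --- that what is needed is convergence in probability under the fixed coupling given by the shared copies $Z^j_t$, which is supplied by the convergence \eqref{eqn:goal} established inside the proof of Lemma~\ref{lem:continue++} rather than by its statement (which only gives convergence of laws) --- is a point the paper passes over silently, and you handle it correctly.
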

\begin{proof} Let $(\mu_n: n\in\N)$ be point measures such that $\lim_{n\to \infty} \mu_n=\mu$ in ${\mathcal P}_{\theta}$, that is $\ex_{\theta} \mu_n\Longrightarrow \ex_{\theta} \mu$ as $n\to \infty$, in the sense of weak convergence of finite measures on $\R$. By Prohorov's theorem, $(\ex_{\theta} \mu_n: n\in\N)$ is tight, and thus there exists a positive continuous norm-like function $g:\R\to (0,\infty)$ such that $\sup_{n\in \N} \crochet {\mu_n, \ex_{\theta}g}<\infty$. This enables us to apply Lemma~\ref{lem:continue++} and our conclusion follows. 
\end{proof}

We are now able to establish Proposition \ref{prop:strongbranchingproperty}.

\begin{proof}[Proof of Proposition \ref{prop:strongbranchingproperty}]
Let $T$ be an a.s. finite $(\calF^+_t)$-stopping time. For every $k \in \N$, we set
\[
 T_k \coloneqq 2^{-k} \ceil{2^k T+1}.
\]
So $T_k\geq T+2^{-k}$, $T_k$ is an $(\calF_t)$-stopping time with values in $2^{-k}\N$, and 
$T_k$ decreases to $T$ as $k\to \infty$. 
Next consider an event $A \in \calF^+_T$, and $f: {\mathcal P}_{\theta}\to \R$ a continuous bounded function. 
By right-continuity (see Proposition \ref{prop:cadlag}), we have that for every fixed $t\in\R_+$
$$\E({\bf 1}_{A}f(Z_{T+t}))= \lim_{k\to \infty}\E({\bf 1}_{A}f(Z_{T_k+t})).$$

Let $(Z^n_t: t\in \R_+)_{n\in\N}$ be a sequence of i.i.d. copies of $(Z_t: t\in \R_+)$, which is further independent of $(Z_s: s\in D)$,
and set
$${Y_t}(Z_{T_k})\coloneqq \sum_{n\in\N}\tau_{x_n}Z^n_t,\qquad t\in\R_+,$$
where $(x_n: n\in\N)$ denotes the ranked sequence of the atoms of $Z_{T_k}$.
One checks readily that $A \in \calF_{T_k}$ for every $k\in\N$. Applying (B)
on the event $\{T_k = n2^{-k}\}$ and summing over $n$, we get, provided that $t$ is a dyadic rational number, that
$$\E({\bf 1}_{A}f(Z_{T_k+t}))= \E({\bf 1}_{A}f({Y_t}(Z_{T_k}))).$$
We then let $k\to \infty$ and use Lemma \ref{LFeller} to conclude that
$$\E({\bf 1}_{A}f(Z_{T+t}))= \E({\bf 1}_{A}f({Y_t}(Z_{T}))).$$

In other words, we have shown that the one-dimensional dyadic rational marginals of the conditional distribution of the process $(Z_{T+t}: t\in \R_+) $ given $\calF^+_T$
are the same as those of $({Y_t}(Z_{T}): t\in\R_+)$ given $Z_{T}$. By induction, it follows that the same holds for the finite-dimensional dyadic rational marginals, and since both processes are c\`adl\`ag a.s., we conclude that the conditional distribution of the process $(Z_{T+t}: t\in \R_+) $ given $\calF^+_T$
coincide with that of $({Y_t}(Z_{T}): t\in\R_+)$ given $Z_{T}$. This in turn entails our claim.
\end{proof}

\section{Process with finite birth intensity}
\label{sec:finiteBirthIntensity}

We say that a nested branching random walk $Z$ has a {\it finite birth intensity} if \eqref{eqn:integrabilityInfinitelyRamified} is fulfilled for $\theta=0$. Observe that in this situation, $\ex_0={\mathbf 1}$, and ${\mathcal P}_0$ is simply the space of finite point measures on $\R$, or, equivalently, the space of finite sequences of atoms in $\R$. 

Throughout this section, $(Z_t: t\in\R_+)$ denotes the c\`adl\`ag extension of a nested branching random walk with finite birth intensity. Our goal is to show that the law of this process can be characterized in terms of some basic parameters, and to describe its genealogy. 

\subsection{The first branching time}
\label{subsec:firstBranchingTime}
 The process of the total mass, 
$$\crochet{Z,{\bf 1}}=(\crochet{Z_t,{\bf 1}}: t\in \R_+),$$
takes finite integer values and has c\`adl\`ag paths a.s. The branching property of $Z$ easily transfers to $\crochet{Z,{\bf 1}}$, in the sense that for every $s,t\in\R_+$,
conditionally on $\crochet{Z_s,{\bf 1}}=k$, 
 $\crochet{Z_{t+s},{\bf 1}}$
is independent of $\calF^+_s$ and has the law of the sum of $k$ i.i.d. copies of $\crochet{Z_t,{\bf 1}}$. Using the terminology of Athreya and Ney \cite{AtN}, $\crochet{Z,{\bf 1}}$ is a one-dimensional continuous time Markov branching process, i.e. a Galton-Watson process in continuous time.

In particular, the first branching time 
$$T_B\coloneqq \inf\{t>0: \crochet{Z_t,{\bf 1}}\neq 1\}$$
has an exponential distribution with finite parameter, say $\beta\in \R_+$. For every $0\leq t < T_B$, $Z_t$ possesses a single atom in $\R$; we denote its location by $\zeta_t$ 
(i.e. $Z_t=\delta_{\zeta_t}$),
and declare that $\zeta_t=-\infty$ for $t\geq T_B$. Because $Z$ has c\`adl\`ag paths in ${\mathcal P}_0$, $\zeta$ has also c\`adl\`ag paths during its lifetime $[0,T_B)$. At the branching time $T_B$, we further record the relative positions of the children with respect to that of their parent as the point measure ${\mathbf {\Delta}}$ defined by
\[ \mathbf{\Delta} \coloneqq \tau_{(-\zeta_{T_B-})} Z_{T_B}=\left(x_n - \zeta_{T_B-} : n \in \N\right),\]
where $\x = Z_{T_B}$. We agree for definiteness that ${\mathbf {\Delta}}=\varnothing$ (the zero point measure) when $\beta=0$, that is when $T_B=\infty$ a.s. Note also that $T_B$ is an $(\calF^+_t)$-stopping time.

\begin{lemma} \label{L:LP} In the notation above, the following holds:
\begin{enumerate} \item The process $(\zeta_t: t\in\R_+)$ is a L\'evy process killed at rate $\beta\geq 0$, which is further independent of ${\mathbf {\Delta}}$. 
\item The mass of ${\mathbf {\Delta}}$, i.e. the number of its atoms in $\R$, 
$$\#{\mathbf {\Delta}}\coloneqq {\rm Card}\{i\geq 1: \Delta_i\in\R\}=\crochet{{\mathbf {\Delta}},{\mathbf 1}}$$ 
fulfills
$$\P( \#{\mathbf {\Delta}}=1)=0 \quad \hbox{ and  }\E( \#{\mathbf {\Delta}})<\infty.$$
\end{enumerate}
\end{lemma}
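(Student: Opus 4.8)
The plan is to analyze the behavior of the nested branching random walk near time $0$, exploiting the strong branching property at the stopping time $T_B$ together with the many-to-one formula. The key observation is that $Z$ restricted to $[0,T_B)$ consists of a single particle, and the branching property should force this single trajectory to be a Markov process with stationary independent increments, killed precisely at the branching event. For part (1), I would first note that $T_B$ is an $(\calF^+_t)$-stopping time and that $\crochet{Z,\mathbf{1}}$ is a continuous-time Galton–Watson process, so $T_B$ is exponential with some rate $\beta\in\R_+$ (already established in the text). The crucial structural claim is that $(\zeta_t: 0\le t<T_B)$, the trajectory of the lone particle, is a L\'evy process run until an independent exponential killing time, and that the reproduction data $\mathbf{\Delta}$ recorded at $T_B$ is independent of the past trajectory.

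To prove part (1), I would apply the strong branching property of Proposition \ref{prop:strongbranchingproperty}. The idea is that the branching property (B) at dyadic times, extended to the stopping time $T_B$, decouples the pre-$T_B$ trajectory from what happens afterwards. More concretely, for the single-particle phase one can use the simple branching property on the event $\{T_B>t\}$: conditionally on the particle not having branched by time $s$, its future increment over $[s,s+t]$ should be distributed as the increment of a fresh copy conditioned on no branching, which yields the Markov/L\'evy structure together with the exponential killing at rate $\beta$. The independence of $\zeta$ and $\mathbf{\Delta}$ should follow because, at the branching event, the relative displacements $\mathbf{\Delta}=\tau_{-\zeta_{T_B-}}Z_{T_B}$ are determined by the reproduction law, which by the branching structure is independent of the ancestral path up to $T_B-$. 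I would make this rigorous by computing joint Laplace/Fourier functionals of $(\zeta_t: t\le s)$ and $\mathbf{\Delta}$ on $\{T_B\in(s,s+\dd t)\}$ and checking they factorize, using the memorylessness of the exponential distribution of $T_B$.

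For part (2), the two assertions are of different natures. The statement $\E(\#\mathbf{\Delta})<\infty$ should follow from the finite-birth-intensity assumption \eqref{eqn:integrabilityInfinitelyRamified} with $\theta=0$: since $\ex_0=\mathbf{1}$, condition \eqref{eqn:integrabilityInfinitelyRamified} gives $\E(\crochet{Z_1,\mathbf{1}})<\infty$, and by the Galton–Watson structure the expected offspring number at a branching event is finite; quantitatively, the many-to-one formula of Lemma \ref{lem:manytooneZ} controls $\E(\crochet{Z_t,\mathbf{1}})$, and decomposing at the first branching time expresses this in terms of $\E(\#\mathbf{\Delta})$, forcing the latter to be finite. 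The assertion $\P(\#\mathbf{\Delta}=1)=0$ is more subtle and I expect it to be the main obstacle: it says that a branching event genuinely changes the number of particles, which is the content of the definition $T_B=\inf\{t>0:\crochet{Z_t,\mathbf{1}}\ne 1\}$. The danger is that a ``branching'' could produce exactly one child (so that $\crochet{Z_{T_B},\mathbf{1}}=1$ again), which would be invisible to the total-mass process and would contradict $T_B$ being the first time the mass differs from $1$.

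**The hard part will be** ruling out the single-child scenario. I would argue by contradiction using the c\`adl\`ag nature of $\crochet{Z,\mathbf{1}}$ and the definition of $T_B$: if $\#\mathbf{\Delta}=1$ with positive probability, then at $T_B$ the total mass would jump from $1$ to $1$, i.e. not jump at all, so $T_B$ would not be a discontinuity of the integer-valued process $\crochet{Z,\mathbf{1}}$, contradicting that $T_B$ is by construction the first time this process leaves the value $1$. One must be careful that $T_B$ is defined via the total mass and that a single-child event leaves the mass unchanged, so such an event simply cannot occur \emph{at} $T_B$ by definition; the content of the claim is therefore that the reproduction measure $\mathbf{\Delta}$, which encodes the configuration \emph{at the first genuine mass-change}, never has exactly one atom. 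I would phrase this cleanly by observing that $\crochet{Z_{T_B},\mathbf{1}}=\#\mathbf{\Delta}$ and that, by right-continuity and the definition of $T_B$ as the first exit of $\crochet{Z,\mathbf{1}}$ from $\{1\}$, we have $\crochet{Z_{T_B},\mathbf{1}}\ne 1$ a.s. on $\{T_B<\infty\}$, whence $\#\mathbf{\Delta}\ne 1$; the case $\#\mathbf{\Delta}=0$ is permitted and corresponds to a death event. The remaining care is to handle the possibility of accumulation of jumps, but since we are in the finite-birth-intensity regime the total-mass process is a genuine continuous-time Galton–Watson process with c\`adl\`ag paths and isolated jumps, so $T_B>0$ a.s. and the first jump is well-defined.
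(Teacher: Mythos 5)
Your proposal follows essentially the same route as the paper: part (1) via the strong branching property of Proposition \ref{prop:strongbranchingproperty} applied at deterministic times on the event $\{t<T_B\}$, the identity $\P(\#{\mathbf{\Delta}}=1)=0$ from right-continuity of the integer-valued mass process and the definition of $T_B$, and $\E(\#{\mathbf{\Delta}})<\infty$ from the fact that $\crochet{Z,{\mathbf 1}}$ is a continuous-time Galton--Watson process with $\E(\crochet{Z_1,{\mathbf 1}})<\infty$ (the paper simply cites Athreya--Ney where you sketch the first-branching-time decomposition directly). The argument is correct.
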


\begin{proof}
As $(\crochet{Z_t,\mathbf{1}} : t \in \R_+)$ is a Galton-Watson process in continuous time, either $T_B<\infty$ a.s. or $\crochet{Z_t,\mathbf{1}} = 1$ a.s. for all $t > 0$. We observe that Lemma~\ref{L:LP} holds trivially when $T_B=\infty$ a.s. Indeed the branching property (B) then simply translates into independence and stationarity of the increments of the trajectory of the only atom $\zeta$. This proves that $\zeta$ is a Lévy process, and the other property comes from $\Delta = \varnothing$ a.s. Therefore, we assume in the rest of the proof that $T_B<\infty$ a.s.

1. The extended version of the branching property in Proposition \ref{prop:strongbranchingproperty} shows in particular that 
for every $t\in\R_+$, conditionally on $t<T_B$, the translated process $(\tau_{-\zeta_t} Z_{t+s}:s\in\R_+)$ is independent of $\calF^+_t$ and has the same law as $(Z_s: s\in \R_+)$. 
Since on that event, ${\mathbf {\Delta}}$ only depends on the translated process, we deduce that for every bounded measurable functionals $F$ and $G$, there is the identity
$$\E(F(\zeta_s: 0\leq s \leq t)G({\mathbf {\Delta}}), t<T_B)=\E(F(\zeta_s: 0\leq s \leq t), t< T_B)\E(G({\mathbf {\Delta}})).$$
Hence ${\mathbf {\Delta}}$ is independent of $(\zeta_t: 0\leq t < T_B)$.

The same argument also shows that conditionally on $t<T_B$, $\zeta$ is a process with independent and stationary increments on the time-interval $[0,t]$, which is further independent of $T_B-t$, as the latter quantity then only depends on the translated process. This proves our assertion that $\zeta$ is a killed L\'evy process. 

2. 
Indeed, the first assertion is plain from the definition of the first branching time $T_B$ and the right-continuity of $Z$. Moreover $\#{\mathbf {\Delta}}=\crochet{Z_{T_B},{\bf 1}}$ is the number of children produced by an individual in the Galton-Watson branching process $\crochet{Z,\mathbf{1}}$. By \cite[Chapter 3, Theorem 2]{AtN}, as $\E(\crochet{Z_1,{\bf 1}})<\infty$, $\#\mathbf{\Delta}$ has finite expectation as well.
\end{proof}

The distribution of the process $Z$ up to and including its first branching time, $(Z_t: 0\leq t\leq T_B)$, is thus determined by the law of the killed L\'evy process $\zeta$ and that of the independent point measure ${\mathbf {\Delta}}$. 
We denote the latter by $\rho$, i.e.
$$\rho(\dd \x)\coloneqq \P({\mathbf {\Delta}}\in\dd \x)$$
and recall that ${\mathbf {\Delta}}$ is never a Dirac point mass.
On the other hand, $\zeta$ shall be viewed as a L\'evy process $\zeta'=(\zeta'(t): t\in\R_+)$ killed at an independent exponential time $T_B$ with parameter $\beta\geq 0$ (recall that $\beta$ is the branching rate of $\crochet{Z,1}$). 
In turn, the law of $\zeta'$ is classically characterized by 
 a triple $(\sigma^2, a',\nu )$, where $\sigma^2\geq 0$ is the Brownian coefficient, $a'\in\R$ the drift coefficient, and $\nu$ the L\'evy measure. The latter is a measure on $\R$ such that $\nu(\{0\})=0$ and $\int(1\wedge x^2)\nu(\dd x)<\infty$. Let $\Phi: \R\to \C$ denote the characteristic exponent of $\zeta'$, which is given by the L\'evy-Khintchin formula:
 \begin{equation}
\label{LKzeta'} \Phi({r})=-\frac{\sigma^2}{2} {r}^2 + ia'{r} +\int_{\R} \left(\e^{i{r} x}-1-i{r} x{\bf 1}_{|x|<1}\right) \nu(\dd x),
\end{equation}
so that for every $t\in\R_+$,
 $$\E(\exp( i{r} \zeta'(t)))=\exp\left(t\Phi({r})\right).$$

Thanks to the strong branching property stated in Proposition \ref{prop:strongbranchingproperty}, 
the law of the full process $(Z_t: t\in\R_+)$ is characterized by that of its restriction to the random time-interval $[0, T_B]$. Indeed, for any $t >0$, there is almost surely only a finite number of reproduction events occurring before time $t$, so using the strong branching property a finite number of times yields the following description.

During the time interval $[0, T_B)$, the point measure $Z_t$ consists of a single individual which starts from $0$ and moves in $\R$ according to $\zeta'$. At time $T_B$, 
which has an exponential distribution with parameter $\beta$ and is independent of $\zeta'$, 
this individual dies at location $\zeta'(T_B)$ and simultaneously gives birth to children at locations
$\zeta'(T_B)+\Delta(1)$, $\zeta'(T_B)+\Delta(2)$, \ldots, where~${\mathbf {\Delta}}=(\Delta(i): i\in\N)$ is a random finite
point measure. More precisely, we know from Lemma \ref{L:LP} that~${\mathbf {\Delta}}$ is independent of $\zeta'$ and the lifetime $T_B$. 
In turn, conditionally on the birth locations, each child evolves after its birth according to the same L\'evy dynamics, independently of the other children. 
At death, these children produce children of their own around their position just before death, according to independent copies of ${\mathbf {\Delta}}$, and so on and so forth.

We stress that this description involves a richer structure than that contained in the sigma-algebra generated by the sole process~$Z$; namely, it is not always possible to recover the ancestral lineages from the latter. Indeed, think for instance of a birth event such that a child is born at the same location as another individual. Then, in general, one cannot discriminate the trajectory of each of these two individuals observing only the process $Z$. 

Putting things together, the distribution of a nested branching random walk with finite birth intensity $Z$ is determined by 
the parameters $(\Phi, \beta,\rho)$, which are henceforth called the parameters of $Z$.

\subsection{Branching L\'evy processes with finite birth intensity}
\label{subsec:branchingLevyProcessFiniteBirthIntensity}

In this section, we introduce formally branching L\'evy processes with finite birth intensity by 
rephrasing technically the verbal description of the dynamics of a nested branching random walk with finite birth intensity. Actually, this is merely an adaptation of Definition 1 in~\cite{BeCF} in a slightly more general setting.

To start with, let $\Phi$ be the characteristic exponent of a L\'evy process, $\beta \geq 0$ and $\rho$ a probability measure on ${\mathcal P}_0^*$, where
$${\mathcal P}_0^*\coloneqq \{\x\in{\mathcal P}_0: \#\x\neq 1\}$$
denotes the subspace of finite point measures which are not Dirac masses. We further suppose that 
$$\int_{{\mathcal P}_0} \#\x \, \rho(\dd \x) <\infty,$$ and that 
$\rho=\delta_{\varnothing}$ if $\beta=0$. 
We stress that we do not assume {\it a priori} that $(\Phi, \beta, \rho)$ is the triple that characterizes the law of some nested branching random walk with finite birth intensity.

 We then need to label individuals and therefore introduce the set of all finite sequences of integers, a.k.a. the Ulam tree,
\[\U = \bigcup_{n \geq 0} \N^n.\]
In particular, the empty sequence $\emptyset$ represents the ancestor\footnote{Beware that we use a different although seemingly similar notation $\varnothing$ for the zero point measure.}. We shall use some further notation in this setting. If $u \in \N^n$, we write
\begin{itemize}
 \item $|u|=n$ the generation of $u$;
 \item $u=(u(1),\ldots, u(n))$, such that $u(k)$ is the $k$-th term of the sequence $u$;
 \item $u_k = (u(1),\ldots, u(k))$ the ancestor at generation $k\leq n $ of $u$;
 \item for $j \in \N$, $u.j = (u(1),\ldots ,u(n),j)$ the $j$-th child of $u$. 
\end{itemize}

Each individual has a birth-time $b_u$, a death-time $d_u$, and a spatial location $\ell_u(t)\in\R$ for $t\in[b_u, d_u)$ which are random and constructed as follows. 
Let $(T_u)_{u\in \U}$, $(\zeta'_u)_{u\in \U}$
and $({\mathbf {\Delta}}_u)_{u\in \U}$ are three independent processes such that:
\begin{itemize}

\item $(T_u)_{u\in \U}$ is a family of i.i.d. exponential variables with parameter $\beta$,

\item $(\zeta'_u)_{u\in \U}$ is a family of i.i.d. L\'evy processes with characteristic exponent $\Phi$ given by \eqref{LKzeta'}, 

\item $({\mathbf {\Delta}}_u)_{u\in \U}$ is a family of i.i.d. random point measures in ${\mathcal P}_0$, each distributed according to $\rho$.
\end{itemize} 
The variable $T_u$ corresponds to the lifetime of the individual labelled by $u$. The birth-time $b_u$ and death-time $d_u$ of this individual are thus given by 
\begin{equation}\label{eq11} b_u=\sum_{j=0}^{|u|-1}T_{u_j} \quad \hbox{and} \quad d_u=b_u+ T_u.
\end{equation}
In turn, the process $\zeta'_u$ governs the motion of the individual $u$ during its life, and ${\mathbf {\Delta}}_u=(\Delta_u(1), \Delta_u(2), \ldots)$ specifies the ranked sequence of the relative positions of the children of $u$ with respect to the location of the individual $u$ at death. Specifically, we have 
$$\ell_u(t)=\zeta'_u(t-b_u) + \sum_{j=0}^{|u|-1}\left( \zeta'_{u_j}(T_{u_j}) + \Delta_{u_j}(u(j+1))\right) , \quad b_u\leq t < d_u. $$
In particular, for $u=\emptyset$, $b_{\emptyset}=0$ and $(\ell_{\emptyset}(t): 0\leq t < d_{\emptyset})$ has the same law as $(\zeta_t: 0\leq t < T_B)$. 

\begin{definition} \label{D:bLpfbi} 
The point measure valued process 
$${Z}_t\coloneqq 
\left( \sum_{u\in\U} {\mathbf 1}_{b_u\leq t < d_u} \delta_{\ell_u(t)}: t\in\R_+\right)$$
(recall our convention that atoms at $-\infty$ are always discarded),
is called a branching L\'evy process with finite birth intensity and parameters $(\Phi, \beta, \rho)$.
\end{definition}
For instance, a binary branching Brownian motion is a branching L\'evy process with finite birth intensity; its parameters are $\Phi({r})= -\frac{1}{2} \sigma^2 {r}^2$, 
$\beta\in\R_+$ and $\rho$ is the Dirac mass at the point measure $\mu=2\delta_0$ (i.e. at $\x=(0,0,-\infty, \ldots)$).
More generally, recall that branching random walks in continuous time, as they were considered first by Uchiyama \cite{Uch} and then by many authors (see e.g. \cite{Kyp1999} and references therein), can be constructed as follows. We first endow the edges of the Ulam tree $\U$ with lengths, such that the length of the edge between a parent $u$ and its child $u.j$ is given by $T_u$ for all $j\in\N$, 
and then assign to that child a weight $\Delta_u(j)$. We assume that the families $(T_u)_{u\in \U}$ and $({\mathbf {\Delta}}_u)_{u\in \U}$ are random and distributed as before. 
The point process obtained at time $t$ by cutting the tree at height $t$ and summing weights on each branch to the root is then a branching random walk in continuous time.
We now see that branching L\'evy processes with finite birth intensity simply result from the superposition of independent L\'evy motions to a branching random walk in continuous time. 

\begin{remark}
We stress that the structure of a branching L\'evy process with finite birth intensity is richer than that of the sigma-algebra generated by the sole point measure process in Definition \ref{D:bLpfbi}, in the sense that by construction, it is equipped with a genealogical tree.
\end{remark}

The next two statements essentially rephrase the second part of Theorem \ref{thm:main} in the case of finite birth intensity. The first claim of the proposition below should be plain from the discussion at the end of the preceding section. In turn, the second claim should be fairly obvious, even if providing full details of the proof would unavoidably be tedious and therefore is left to scrupulous readers.

\begin{proposition} \label{P:Theta0} \begin{enumerate}
\item The c\`adl\`ag extension $(Z_t: t\in\R_+)$ of a nested branching random walk with finite birth intensity and parameters $(\Phi, \beta,\rho)$
is a branching L\'evy process with finite birth intensity and parameters $(\Phi, \beta,\rho)$, and possibly constructed on some enlarged probability space.

\item Conversely, let $\Phi$ be the characteristic exponent of a L\'evy process, $\beta \geq 0$ and $\rho$ a probability measure on ${\mathcal P}_0^*$ with
$$\int_{{\mathcal P}_0} \#\x \, \rho(\dd \x) <\infty,$$ and such that 
$\rho=\delta_{\varnothing}$ if $\beta=0$. 
The restriction to dyadic rational times of a branching L\'evy process with finite birth intensity and parameters $(\Phi, \beta,\rho)$
is a nested branching random walk with finite birth intensity and parameters $(\Phi, \beta,\rho)$. 
\end{enumerate}

\end{proposition}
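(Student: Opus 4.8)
The plan is to prove Proposition~\ref{P:Theta0} in two directions, each amounting to identifying two laws of $\mathcal{P}_0$-valued processes in continuous time via their restriction to the first branching time and the strong branching property.

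\textbf{Part (1).}
First I would take the c\`adl\`ag extension $(Z_t:t\in\R_+)$ of a nested branching random walk with finite birth intensity and parameters $(\Phi,\beta,\rho)$, as produced by Proposition~\ref{prop:cadlag}. By the very construction in Subsection~\ref{subsec:firstBranchingTime}, the first branching time $T_B$ is exponential with parameter $\beta$, the single-atom trajectory $(\zeta_t:0\le t<T_B)$ is a L\'evy process with characteristic exponent $\Phi$ killed at rate $\beta$, and the point measure of relative child positions $\mathbf{\Delta}$ has law $\rho$ and is independent of $\zeta$ (Lemma~\ref{L:LP}). I would then invoke the strong branching property (Proposition~\ref{prop:strongbranchingproperty}) at the stopping time $T_B$: conditionally on $Z_{T_B}=(x_1,x_2,\ldots)$, the shifted subtrees $(\tau_{-x_n}Z_{T_B+t}:t\ge 0)_{n}$ are i.i.d.\ copies of $Z$, independent of $\calF^+_{T_B}$. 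This is precisely the recursive structure encoded by the labelled construction of Definition~\ref{D:bLpfbi}: assigning the ancestor's pre-branching motion to $\zeta'_\emptyset$, the relative birth positions to $\mathbf{\Delta}_\emptyset\sim\rho$, the lifetime to $T_\emptyset\sim\mathrm{Exp}(\beta)$, and then attaching to each child an independent copy of the whole process. The conclusion follows by an induction on the number of branching events before a fixed time $t$, which is a.s.\ finite because $\crochet{Z,\mathbf{1}}$ is a continuous-time Galton--Watson process with finite mean; possibly one enlarges the probability space to carry the i.i.d.\ families $(T_u),(\zeta'_u),(\mathbf{\Delta}_u)$.

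\textbf{Part (2).}
Conversely, starting from arbitrary admissible parameters $(\Phi,\beta,\rho)$, I would build $Z$ via Definition~\ref{D:bLpfbi} and check the two defining features of a nested branching random walk: that $Z_0=\delta_0$ a.s.\ and that (B) holds, i.e.\ each discrete skeleton $(Z_{k2^{-n}}:k\in\Z_+)$ is a branching random walk. The first point is immediate since only $u=\emptyset$ is alive at time $0$, located at $\ell_\emptyset(0)=0$. For (B), the key is the tree structure: for any $s,t\in D$, each atom of $Z_s$ corresponds to an individual alive at time $s$, and the subpopulation it generates during $(s,s+t]$ is, by the i.i.d.\ nature of the families indexed by descendants of that individual and the Markov property of the L\'evy motions, an independent copy of $Z_t$ shifted by that atom's position. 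One must verify the integrability~\eqref{eqn:integrabilityInfinitelyRamified} at $\theta=0$, namely $0<\E(\crochet{Z_1,\mathbf{1}})<\infty$; finiteness follows from $\int\#\x\,\rho(\dd\x)<\infty$ together with the finite-mean Galton--Watson comparison for the total mass $\crochet{Z,\mathbf{1}}$, exactly as in Lemma~\ref{L:LP}(2), and positivity is clear. Finally, the parameters recovered from this constructed process coincide with $(\Phi,\beta,\rho)$ by reading off the first branching time, confirming it is a nested branching random walk with finite birth intensity and the prescribed parameters.

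\textbf{Main obstacle.}
The genuinely delicate point, in both directions, is the bookkeeping needed to pass from the one-step recursion to the full process and to justify the informal phrase ``using the strong branching property a finite number of times.'' Concretely, I would need to argue that on any finite time horizon $[0,t]$ the total number of branching events is a.s.\ finite --- which rests on the non-explosion of the continuous-time Galton--Watson total mass under $\E(\#\mathbf{\Delta})<\infty$ --- and then set up a clean induction (say on successive branching times, or on the number of events) showing that the finite-dimensional dyadic marginals of the two constructions agree, before upgrading to full path-equality via a.s.\ c\`adl\`ag regularity. The authors themselves flag this as tedious, so my write-up would emphasize the structural identification and the non-explosion estimate while leaving the routine measure-theoretic induction to the reader.
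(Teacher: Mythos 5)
Your proposal is correct and follows essentially the same route as the paper: the authors justify part (1) by exactly the first-branching-time analysis of Section~\ref{subsec:firstBranchingTime} (Lemma~\ref{L:LP} plus the strong branching property of Proposition~\ref{prop:strongbranchingproperty}, iterated over the a.s.\ finitely many branching events before a fixed time), and they leave part (2) as a routine but tedious verification of (B) and the integrability condition from the Ulam-tree construction of Definition~\ref{D:bLpfbi}. Your write-up supplies somewhat more detail than the paper itself, which merely sketches these steps, and correctly identifies the non-explosion of the continuous-time Galton--Watson total mass and the need to enlarge the probability space as the only substantive points.
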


We now conclude this section by connecting the parameter $(\Phi,\beta,\rho)$ to the functions $\kappa$ and $\Psi$ of the preceding section. 
\begin{lemma}\label{Lkumul} Let $Z$ be a branching L\'evy process with finite birth intensity and parameters $(\Phi,\beta,\rho)$. 
For every $t\in\R_+$ and ${r} \in\R$, we have
$$\E\left(\crochet{Z_t, \ex_{i{r}}}\right)=\exp\left( t\left( \Phi({r}) + \beta \int_{{\mathcal P}_0}\left(\crochet{\mu, \ex_{i{r}}}-1\right)\rho(\dd \mu)\right)\right).$$
\end{lemma}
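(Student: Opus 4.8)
The plan is to fix $r\in\R$, set $F(t):=\E(\crochet{Z_t,\ex_{ir}})$, derive a renewal equation for $F$ by decomposing at the first branching time, and then solve that equation explicitly. First I would check that $F$ is well defined and locally bounded: since $|\crochet{Z_t,\ex_{ir}}|\le\crochet{Z_t,\mathbf 1}$ and the total-mass process $\crochet{Z,\mathbf 1}$ is a continuous-time Galton--Watson process whose offspring number has finite mean $m:=\int_{\mathcal{P}_0}\#\x\,\rho(\dd\x)<\infty$, one has $\E(\crochet{Z_t,\mathbf 1})=\e^{\beta(m-1)t}<\infty$, so $F(t)$ exists and $|F(t)|\le\e^{\beta(m-1)t}$.

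Next I would exploit the construction in Definition~\ref{D:bLpfbi}. As $\rho$ charges only $\mathcal{P}_0^*$, the first branching time equals the lifetime $T_\emptyset$ of the ancestor, an $\mathrm{Exp}(\beta)$ variable independent of its driving L\'evy process $\zeta'_\emptyset$ (with $\E(\e^{ir\zeta'_\emptyset(s)})=\e^{s\Phi(r)}$ by \eqref{LKzeta'}) and of its displacement measure ${\mathbf \Delta}_\emptyset\sim\rho$; moreover the subtrees rooted at the children of $\emptyset$ yield i.i.d. copies $Z^{(j)}$ of $Z$, independent of $(T_\emptyset,\zeta'_\emptyset,{\mathbf \Delta}_\emptyset)$. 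Thus on $\{t<T_\emptyset\}$ we have $Z_t=\delta_{\zeta'_\emptyset(t)}$, while on $\{T_\emptyset=s\le t\}$ the branching decomposition gives $\crochet{Z_t,\ex_{ir}}=\e^{ir\zeta'_\emptyset(s)}\sum_{j}\e^{ir\Delta_\emptyset(j)}\crochet{Z^{(j)}_{t-s},\ex_{ir}}$, a finite sum over the atoms of ${\mathbf \Delta}_\emptyset$. Taking expectations and factoring over the independent ingredients — the interchange of the finite sum and the expectation being licit by Tonelli since $\E(\#{\mathbf \Delta}_\emptyset)\,\E(\crochet{Z_{t-s},\mathbf 1})<\infty$ — I obtain, with $c(r):=\int_{\mathcal{P}_0}\crochet{\mu,\ex_{ir}}\rho(\dd\mu)$, the renewal equation
\begin{equation*}
F(t)=\e^{-\beta t}\e^{t\Phi(r)}+\beta c(r)\int_0^t\e^{-\beta s}\e^{s\Phi(r)}\,F(t-s)\,\dd s.
\end{equation*}

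Finally I would solve this by an integrating factor. After the change of variable $u=t-s$ and setting $G(t):=\e^{(\beta-\Phi(r))t}F(t)$, the equation collapses to $G(t)=1+\beta c(r)\int_0^t G(u)\,\dd u$. Since $G$ is locally bounded and measurable, the right-hand side is continuous, hence $G$ is continuous, then $C^1$, so $G'=\beta c(r)G$ with $G(0)=1$; uniqueness (e.g. by Gronwall applied to the difference of two solutions) forces $G(t)=\e^{\beta c(r)t}$. Undoing the substitution yields $F(t)=\exp\!\big(t(\Phi(r)+\beta(c(r)-1))\big)$, which is the claimed identity once one writes $c(r)-1=\int_{\mathcal{P}_0}(\crochet{\mu,\ex_{ir}}-1)\rho(\dd\mu)$ using that $\rho$ is a probability measure. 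The degenerate case $\beta=0$ (where $\rho=\delta_\varnothing$ and $Z_t=\delta_{\zeta'(t)}$) gives $F(t)=\e^{t\Phi(r)}$ directly and is consistent.

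The step demanding the most care is the passage to the renewal equation: one must verify that the offspring subtrees really are independent copies of the whole process — transparent from the i.i.d. labelling of ingredients by the Ulam tree $\U$ and its self-similarity — and that every expectation in sight is finite, which is precisely where the hypothesis $\int\#\x\,\rho(\dd\x)<\infty$ is used to justify the Tonelli interchange. By contrast, the integrating-factor reduction to a scalar linear ODE is routine and has the advantage of delivering existence, the explicit exponent, and uniqueness simultaneously, without any guess-and-verify.
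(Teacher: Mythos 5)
Your proof is correct, but it follows a genuinely different route from the paper's. The paper's proof of Lemma~\ref{Lkumul} is a two-step reduction by citation: it first invokes the known formula for continuous-time branching random walks (the case $\Phi\equiv 0$, read off from Uchiyama \cite{Uch}), and then observes that the general case follows because $Z$ is obtained by superposing independent L\'evy motions with exponent $\Phi$ onto such a process, so that the expectation factorizes (referring to Lemma~2 of \cite{BeCF} for a closely related argument). You instead condition on the first branching time to derive a renewal equation for $F(t)=\E(\crochet{Z_t,\ex_{ir}})$ and solve it via an integrating factor, treating the motion and the branching simultaneously; this needs no input beyond the explicit Ulam-tree construction of Definition~\ref{D:bLpfbi}. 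Your supporting steps are all sound: the a priori bound $|F(t)|\leq \e^{\beta(m-1)t}$ coming from the Galton--Watson total-mass process, the independence and identical distribution of the subtrees rooted at the children of $\emptyset$ (immediate from the i.i.d.\ labelling of the ingredients by $\U$), and the Fubini interchange, which is precisely where $\int_{{\mathcal P}_0}\#\x\,\rho(\dd\x)<\infty$ is used. What the paper's route buys is brevity at the price of two external references; what yours buys is a self-contained proof together with uniqueness of the solution of the renewal equation. In effect, your argument reproves the Uchiyama identity that the paper outsources, extended to accommodate the spatial L\'evy motion in one pass.
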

\begin{proof} In the case when individuals do not move during their lifetimes, that is for branching random walks in continuous time as considered by Uchiyama, we have $\Phi\equiv 0$ 
and the first formula of the statement is easy (it can be read for instance from \cite{Uch} on page 898). The general case then follows from the fact that $Z$ is simply obtained by superposing L\'evy motions with characteristic exponent $\Phi$ to an independent branching random walk in continuous time; see Lemma 2 in \cite{BeCF} for a closely related argument. 
\end{proof}
In the notation of Section~\ref{sec:infinitelyRamifiedPointMeasure}, we can rephrase Lemma \ref{Lkumul} by identifying the cumulant function as
$$\kappa(i{r})= \Phi({r}) + \beta \int_{{\mathcal P}_0}\left(\crochet{\mu, \ex_{i{r}}}-1\right)\rho(\dd \mu),$$
or, equivalently,
$$\kappa(0)=\beta \int_{{\mathcal P}_0}(\#\x-1)\rho(\dd \x)\ ,\ \Psi({r}) = \Phi({r}) + \beta \int_{{\mathcal P}_0}\crochet{\mu, \ex_{i{r}}-1}\rho(\dd \mu).$$

\section{Branching L\'evy processes}
\label{sec:branchingLevyProcess}

Our aim in this section is to get rid of the assumption of finiteness of the birth intensity, that is to consider the case $\theta> 0$. The idea is similar to that of Section~3 in \cite{BeCF} (which actually bears the same title), so we shall merely provide here the main steps without going too far into technical details.

For this purpose, we first consider the case of finite birth intensity treated in the preceding section and introduce an equivalent parametrization.

\subsection{The L\'evy measure for finite birth intensities}
\label{subsec:finiteLevy}
Throughout this section, we consider a branching L\'evy process with finite birth intensity and parameters $(\Phi, \beta, \rho)$ as in the preceding section, and recall the notation $\x=(x_1, \x_2)$ with $\x_2=(x_{n+1}: n\in\N)$ and $\varnothing=(-\infty, \ldots)$. We then define $$\Lambda(\dd \x)\coloneqq \nu(\dd x_1)\delta_{\varnothing}(\dd \x_2)+\beta \rho(\dd \x)\,,\qquad \x\in{\mathcal P}_0.$$
We shall call $\Lambda$ the {\em L\'evy measure} of $Z$. We further set
$$a\coloneqq a'+\beta \int_{{\mathcal P}_0} x_1{\mathbf 1}_{|x_1|<1}\rho(\dd \x).$$

The next statement entails in particular that the parameters $(\Phi, \beta,\rho)$ can be recovered from $(\sigma^2, a, \Lambda)$. Recall that ${\mathcal P}_0^*$ denotes the subspace of finite point measures which are not Dirac point masses. 
 
\begin{lemma}\label{Lparam} The following assertions hold: 
\begin{enumerate}
 \item The branching rate is given by
 $$\beta=\Lambda({\mathcal P}_0^*).$$
 \item For $\beta\neq 0$, we have
$$\rho=\beta^{-1} {\mathbf 1}_{{\mathcal P}_0^*} \Lambda.$$
 \item The L\'evy measure $\Lambda$ fulfills
$$
\Lambda(\{(0,\varnothing)\})=0\ , \ \int_{{\mathcal P}_0}(1\wedge x_1^2)\Lambda(\dd \x)<\infty\  \text{and}\  \int_{{\mathcal P}_0} |\#\x -1| \Lambda(\dd \x)<\infty. 
$$
 \item The characteristic function $\Phi$ is given for every ${r}\in\R$ by
\[
 \Phi({r}) = - \frac{\sigma^2}{2} {r}^2 + i a' {r} + \int_{\mathcal{P}_0\backslash {\mathcal P}_0^*} \left(\e^{i {r} x_1} - 1 - i {r} x_1 \mathbf{1}_{|x_1|<1}\right) \Lambda(\dd \x).
\]
 \item Finally, the cumulant function $\kappa$ is given for every ${r}\in\R$ by 
 $$\kappa(i{r})= -\frac{\sigma^2}{2} {r}^2 + ia{r} + \int_{{\mathcal P}_0} \left(\sum_{n=1}^{\infty} \e^{i{r} x_n}-1-i{r} x_1{\mathbf 1}_{|x_1|<1}\right) \Lambda(\dd \x).$$
\end{enumerate}
\end{lemma}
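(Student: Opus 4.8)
The plan is to verify the five assertions in turn, since each is essentially a bookkeeping exercise built on the definition $\Lambda(\dd\x)=\nu(\dd x_1)\delta_{\varnothing}(\dd\x_2)+\beta\rho(\dd\x)$ together with the properties of $\nu$ and $\rho$ established in Section~\ref{sec:finiteBirthIntensity}. First I would decompose the state space ${\mathcal P}_0$ according to whether a point measure is a Dirac mass (including the zero measure) or not, i.e. write ${\mathcal P}_0={\mathcal P}_0^*\sqcup({\mathcal P}_0\setminus{\mathcal P}_0^*)$. The key observation is that the first summand $\nu(\dd x_1)\delta_{\varnothing}(\dd\x_2)$ is carried by point measures of the form $(x_1,\varnothing)$, which are Dirac masses, hence it lives entirely in ${\mathcal P}_0\setminus{\mathcal P}_0^*$; whereas the second summand $\beta\rho$ is carried by ${\mathcal P}_0^*$ because $\rho$ is supported on ${\mathcal P}_0^*$ by hypothesis (each ${\mathbf{\Delta}}$ is never a Dirac mass, by Lemma~\ref{L:LP}).

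For assertion (1), restricting $\Lambda$ to ${\mathcal P}_0^*$ kills the $\nu$-part and leaves $\beta\rho$, so $\Lambda({\mathcal P}_0^*)=\beta\,\rho({\mathcal P}_0^*)=\beta$ since $\rho$ is a probability measure. Assertion (2) is the immediate consequence $\mathbf{1}_{{\mathcal P}_0^*}\Lambda=\beta\rho$, whence $\rho=\beta^{-1}\mathbf{1}_{{\mathcal P}_0^*}\Lambda$ when $\beta\neq0$. For assertion (3), the condition $\Lambda(\{(0,\varnothing)\})=0$ follows from $\nu(\{0\})=0$ (a L\'evy measure never charges the origin) and from the fact that $\rho$ does not charge any Dirac mass, in particular not $\delta_0=(0,\varnothing)$. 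The integrability $\int(1\wedge x_1^2)\Lambda(\dd\x)<\infty$ splits into the $\nu$-contribution, finite by the defining property of a L\'evy measure, and the $\beta\rho$-contribution, which is bounded by $\beta\rho({\mathcal P}_0)<\infty$ since $1\wedge x_1^2\leq1$. Finally $\int|\#\x-1|\Lambda(\dd\x)<\infty$ splits as: on ${\mathcal P}_0\setminus{\mathcal P}_0^*$ the $\nu$-part has $\#\x=1$ so $|\#\x-1|=0$ there, and on ${\mathcal P}_0^*$ one gets $\beta\int|\#\x-1|\rho(\dd\x)$, which is controlled using the standing assumption $\int\#\x\,\rho(\dd\x)<\infty$.

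Assertion (4) is where one must be slightly careful. On the set ${\mathcal P}_0\setminus{\mathcal P}_0^*$ the measure $\Lambda$ reduces to the first projection pushing $\nu$ forward, so $\int_{{\mathcal P}_0\setminus{\mathcal P}_0^*}(\e^{i{r} x_1}-1-i{r} x_1\mathbf{1}_{|x_1|<1})\Lambda(\dd\x)=\int_{\R}(\e^{i{r} x}-1-i{r} x\mathbf{1}_{|x|<1})\nu(\dd x)$, which is precisely the integral term in the L\'evy-Khintchin formula \eqref{LKzeta'} for $\Phi$; adding back the Gaussian and drift parts $-\frac{\sigma^2}{2}{r}^2+ia'{r}$ recovers $\Phi({r})$ exactly. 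Here one should note that ${\mathcal P}_0\setminus{\mathcal P}_0^*$ also contains the zero measure $\varnothing$, but $\nu$ does not charge $-\infty$ and the integrand vanishes there by the convention $\e^{i{r}(-\infty)}$ is discarded, so this causes no issue.

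For assertion (5), the starting point is the identification $\kappa(i{r})=\Phi({r})+\beta\int_{{\mathcal P}_0}(\crochet{\mu,\ex_{i{r}}}-1)\rho(\dd\mu)$ already obtained after Lemma~\ref{Lkumul}. Substituting the formula for $\Phi$ from assertion (4) and writing $\crochet{\mu,\ex_{i{r}}}=\sum_{n=1}^{\infty}\e^{i{r} x_n}$, the $\beta\rho$-integral becomes $\int_{{\mathcal P}_0^*}(\sum_{n=1}^{\infty}\e^{i{r} x_n}-1)\Lambda(\dd\x)$. The remaining task is to reassemble the two regions into a single integral over all of ${\mathcal P}_0$ against the compensator $-1-i{r} x_1\mathbf{1}_{|x_1|<1}$, and simultaneously to absorb the drift correction: since $a=a'+\beta\int_{{\mathcal P}_0}x_1\mathbf{1}_{|x_1|<1}\rho(\dd\x)$, the difference $ia{r}-ia'{r}$ supplies exactly the term $-\int_{{\mathcal P}_0^*}i{r} x_1\mathbf{1}_{|x_1|<1}\Lambda(\dd\x)$ needed so that the compensator $-i{r} x_1\mathbf{1}_{|x_1|<1}$ appears uniformly over both regions. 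I expect this final re-bundling to be the main obstacle — not conceptually, but because it requires checking that each piece of the integrand is separately integrable (so that the additive splitting and recombination is legitimate), which is guaranteed by assertions (3) and by the integrability condition $\int\#\x\,\rho(\dd\x)<\infty$ controlling the tail sum $\sum_{n\geq2}\e^{i{r} x_n}$.
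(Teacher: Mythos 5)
Your proposal is correct and follows essentially the same route as the paper's (much terser) proof: split $\Lambda$ into the part $\nu(\dd x_1)\delta_{\varnothing}(\dd \x_2)$ carried by Dirac masses and the part $\beta\rho$ carried by ${\mathcal P}_0^*$, then read off each assertion from the properties of $\nu$, Lemma~\ref{L:LP} and Lemma~\ref{Lkumul}. One small slip that does not affect any conclusion: since ${\mathcal P}_0^*=\{\x:\#\x\neq 1\}$, the zero measure $\varnothing$ (with $\#\varnothing=0$) belongs to ${\mathcal P}_0^*$ and not to ${\mathcal P}_0\setminus{\mathcal P}_0^*$, so the latter set consists exactly of the Dirac masses and your parenthetical caveat about $\varnothing$ in assertion~(4) is unnecessary.
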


\begin{proof} The first two assertions follow immediately from the fact that the restriction of $\Lambda$ to ${\mathcal P}_0^*$ is given by $\beta \rho$. The third one then derives from the fact that $\nu(\{0\})=0$ and second point of Lemma \ref{L:LP}. The fourth comes from the fact that $\nu$ is the projection on the first coordinate of $\Lambda - \beta \rho={\mathbf 1}_{{\mathcal P}_0\backslash {\mathcal P}^*_0}$ and the L\'evy-Khintchin formula \eqref{LKzeta'}. Finally, the fifth is merely a translation of Lemma \ref{Lkumul}. 
\end{proof}

\subsection{Nested sequence of branching L\'evy processes}
\label{subsec:nestedLevy}

We now fix $\theta >0$ and consider a measure $\Lambda$ on ${\mathcal P}_{\theta}$ that fulfills the requirements specified in the Introduction, that are
\begin{equation} \label{eq:condlambda1}
 \Lambda(\{(0,\varnothing)\})=0, \quad \int_{{\mathcal P}_{\theta}}(1\wedge x_1^2)\Lambda(\dd \x)<\infty,
 \end{equation}
\begin{equation} \label{eq:condlambda2}
\text{and} \quad \int_{{\mathcal P}_{\theta}} \left( e^{\theta x_1}\ind{x_1>1}+ \sum_{k=2}^{\infty} \e^{\theta x_k} \right) \Lambda(\dd \x) < \infty.
\end{equation}
Roughly speaking, our goal is to define a branching L\'evy process based on the L\'evy measure $\Lambda$. We shall construct the latter as an increasing limit of a nested sequence of 
branching L\'evy processes with finite birth intensity, whose L\'evy measures are given by a suitable truncation of $\Lambda$.

Specifically, for every integer $n\in\N$ and point measure $\mu\in {\mathcal P}_{\theta}$, we write $ \mu^{(n)}$ for the restriction of $\mu$ to $[-n,\infty)$, so the ranked sequence of the atoms of $\mu^{(n)}$ is $\x^{(n)}=(x^{(n)}_i: i\in\N)$, with
$$ x^{(n)}_i=\left\{ \begin{matrix} x_i& \text{ provided }x_i\geq -n\\
-\infty & \text{ otherwise.}
\end{matrix}
\right.$$
One should view the transformations $\mu\mapsto \mu^{(n)}$ for $n\in\N$ as compatible truncations, 
in particular there is the identity
\begin{equation}\label{eq:proj}
\left(\mu^{(n')}\right)^{(n)}=\mu^{(n)}\qquad \text{ for all }n'\geq n.
\end{equation}

We then denote by $\Lambda^{(n)}$ the measure obtained from the image of $\Lambda$ by the map $\mu\mapsto \mu^{(n)}$ by further removing\footnote{Removing this atom is merely an aesthetic matter: keeping it would simply induce fictive birth events, at which the parent gives birth to a single child, exactly at the same location. This would impact the genealogical tree, but not the point measures, and thus can be ignored.} from the latter the atom at $(0,\varnothing)$ if it exists, so that $ \Lambda^{(n)}(\{(0,\varnothing)\})=0$. We may and will view each $\Lambda^{(n)}$ as a measure on the space of finite point measures ${\mathcal P}_0$, rather than on ${\mathcal P}_{\theta}$.

We next observe that for every $n\in\N$, 
$$
 \int_{{\mathcal P}_0}(1\wedge x_1^2)\Lambda^{(n)}(\dd \x)=  \int_{{\mathcal P}_{\theta}}(1\wedge (x_1^{(n)})^2)\Lambda (\dd \x)\leq  \int_{{\mathcal P}_{\theta}}(1\wedge x_1^2)\Lambda(\dd \x) <\infty\ ,
$$
and 
$$  \int_{{\mathcal P}_0} |\#\x -1| \Lambda^{(n)}(\dd \x)\leq \Lambda^{(n)}(\{\varnothing\})+ \e^{\theta n}\int_{{\mathcal P}_{\theta}}\sum_{k=2}^{\infty} \e^{\theta x_k}\Lambda (\dd \x)
<\infty.$$
Hence, we may view each $\Lambda^{(n)}$ as the L\'evy measure of a branching L\'evy process with finite birth intensity. 

The next result claims that one can construct a {\em nested} sequence of branching L\'evy processes with finite birth intensity and L\'evy measures $\Lambda^{(n)}$. Essentially, this follows from the genealogical construction discussed in Section~\ref{subsec:firstBranchingTime} and the compatibility relation \eqref{eq:proj}. We skip details and refer to Section 3 in \cite{BeCF} (see in particular Lemma 3 there) where a similar construction is performed in a less general setting. 

\begin{lemma} \label{L:nestBLP} Let $\sigma^2\geq 0$, $a\in\R$ and $\Lambda$ a measure on ${\mathcal P}_{\theta}$ that fulfills \eqref{eq:condlambda1} and \eqref{eq:condlambda2}. One can construct a sequence $(Z^{(n)}: n\in\N)$ of branching L\'evy processes with finite birth intensity and characteristics
$(\sigma^2, a,\Lambda^{(n)})$, such that for $n\leq n'$, $Z^{(n)}$ results from $Z^{(n')}$ by killing an individual $u$ whenever it makes a negative jump $<-n$,
i.e. when $\Delta \zeta'_u(t)\coloneqq \zeta'_u(t)-\zeta'_u(t-)<-n$, and also killing 
the children $u.j$ which are born at distance greater than $n$ at the left of their parent $u$, i.e. such that $\Delta_u(j)< -n$. 
\end{lemma}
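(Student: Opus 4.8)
The plan is to adapt the nested construction of Section~3 in \cite{BeCF} to the present setting, exploiting the compatibility relation \eqref{eq:proj} to realize all the processes $Z^{(n)}$ jointly on a single probability space through a censoring (killing) procedure.

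First, for each fixed $n\in\N$, I would recover the finite-birth-intensity parameters $(\Phi, \beta, \rho)$ attached to the characteristics $(\sigma^2, a, \Lambda^{(n)})$ by inverting Lemma~\ref{Lparam}: set $\beta_n=\Lambda^{(n)}({\mathcal P}_0^*)$, let $\rho_n=\beta_n^{-1}{\mathbf 1}_{{\mathcal P}_0^*}\Lambda^{(n)}$ when $\beta_n>0$, and take $\nu^{(n)}$ to be the first-coordinate projection of ${\mathbf 1}_{{\mathcal P}_0\setminus{\mathcal P}_0^*}\Lambda^{(n)}$, which together with $\sigma^2$ and $a'$ determines $\Phi^{(n)}$ through \eqref{LKzeta'}. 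The two displayed estimates preceding the statement guarantee that $\beta_n<\infty$ and that $\int_{{\mathcal P}_0}\#\x\,\rho_n(\dd\x)<\infty$, so each $\Lambda^{(n)}$ is genuinely the L\'evy measure of a branching L\'evy process with finite birth intensity, built via Definition~\ref{D:bLpfbi}.

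The heart of the argument is to show that, for $n\le n'$, applying to $Z^{(n')}$ the two killing operations described in the statement --- killing an individual $u$ at a motion jump $\Delta\zeta'_u(t)<-n$, and suppressing every child $u.j$ with $\Delta_u(j)<-n$ together with its descendants --- produces exactly a branching L\'evy process with finite birth intensity and L\'evy measure $\Lambda^{(n)}$. I would phrase this at the level of the underlying Poisson structures: along each individual's lifetime the motion jumps of $\zeta'_u$ form a Poisson point process with intensity $\nu^{(n')}$ and the reproduction events one with intensity $\beta_{n'}\rho_{n'}={\mathbf 1}_{{\mathcal P}_0^*}\Lambda^{(n')}$, whose superposition has intensity ${\mathbf 1}_{{\mathcal P}_0\setminus\{(0,\varnothing)\}}\Lambda^{(n')}$. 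Killing a jump smaller than $-n$ amounts to sending the corresponding first coordinate to $-\infty$, while suppressing a child at relative position below $-n$ sends the corresponding coordinate of $\x$ to $-\infty$; by the mapping theorem for Poisson point processes, the image point process has intensity the pushforward of ${\mathbf 1}_{{\mathcal P}_0\setminus\{(0,\varnothing)\}}\Lambda^{(n')}$ under the truncation $\mu\mapsto\mu^{(n)}$, which, after discarding the atom at $(0,\varnothing)$, is precisely ${\mathbf 1}_{{\mathcal P}_0\setminus\{(0,\varnothing)\}}\Lambda^{(n)}$. That the censored genealogy is still an i.i.d. family of the required form, so that the resulting object fits Definition~\ref{D:bLpfbi}, follows from the branching property of Proposition~\ref{prop:strongbranchingproperty} together with the mutual independence of the families $(\zeta'_u)$, $({\mathbf {\Delta}}_u)$, $(T_u)$ over $u\in\U$, once the surviving individuals are re-indexed on a fresh copy of the Ulam tree.

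I expect the main obstacle to be the bookkeeping inherent in this reclassification, since truncation does \emph{not} respect the splitting $\Lambda=\nu\,\delta_\varnothing+\beta\rho$: a reproduction atom $\x\in{\mathcal P}_0^*$ may, after removal of its children below $-n$, collapse into a Dirac mass (hence be demoted to a pure motion jump at level $n$) or into $\varnothing$ (a pure killing event), whereas a motion jump below $-n$ must instead be reinterpreted as a killing. One therefore cannot censor the motion part and the reproduction part in isolation; the computation has to be carried out on the merged intensity ${\mathbf 1}_{{\mathcal P}_0\setminus\{(0,\varnothing)\}}\Lambda^{(n')}$, where the pushforward identity becomes transparent. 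Finally, since by \eqref{eq:proj} the level-$n$ censoring of the level-$n'$ process factors through any intermediate level, the censoring operations are consistent: the laws of the finite families $(Z^{(n)})_{n\le n'}$ obtained by iterated censoring of $Z^{(n')}$ are compatible as $n'$ increases, and Kolmogorov's extension theorem then furnishes the whole nested sequence $(Z^{(n)}:n\in\N)$ on a common probability space.
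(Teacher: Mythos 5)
Your proposal is correct and follows essentially the same route as the paper, which itself only sketches this construction by invoking the genealogical description of Section~4, the compatibility relation \eqref{eq:proj}, and Lemma~3 of \cite{BeCF}. You in fact supply more detail than the paper does — notably the reclassification of collapsed reproduction atoms into motion jumps or pure deaths, which the paper only addresses in the remark following the lemma — and the projective-limit step at the end is the standard way to realize the whole nested family on one probability space.
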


\begin{remark} The killing operation describes above modifies the labelling of individuals and their trajectories. Typically, a birth event for $Z^{(n')}$ at which all the children but one lie at distance greater than $n$ to the left of the parent, is no longer considered as a birth event for $Z^{(n)}$, but rather as an event when an individual makes a jump (of size $\geq -n$) without generating progeny. Nonetheless, if we keep in mind this relabelling of individuals, the ancestral trajectories for $Z^{(n)}$ of course coincides with that for $Z^{(n')}$, and the genealogical tree of $Z^{(n)}$ simply results from the pruning of the genealogical tree of $Z^{(n')}$. 
\end{remark}

Plainly, for every $t\geq 0$, the sequence of atoms of $Z^{(n)}_t$ is a subsequence of that of $Z^{(n')}_t$, or equivalently, in terms of point measures, $Z^{(n)}_t\leq Z^{(n')}_t$. 
This naturally leads us to:

\begin{definition}\label{D:bLp} In the notation of Lemma \ref{L:nestBLP}, the increasing limit 
 $$Z^{(\infty)}_t \coloneqq \lim_{n\to \infty} Z^{(n)}_t,\qquad t\geq 0,$$
is called a {\em branching L\'evy process} with characteristics $(\sigma^2,a,\Lambda)$.
\end{definition}

We easily check that $Z^{(\infty)}_t$ is a point measure in ${\mathcal P}_{\theta}$, a.s.
Indeed, if we write $\kappa^{(n)}$ for the cumulant function of $Z^{(n)}$, so that for $z\in\C$ with $ 0\leq \Re z\leq \theta$,
 $$\E(\crochet{Z^{(n)}_t, \ex_z})=\exp(t\kappa^{(n)}(z)),$$
then by Lemma \ref{Lparam}.5 and analytic continuation, we have
 $$\kappa^{(n)} (z)= \frac{\sigma^2}{2} z^2 + az + \int_{{\mathcal P}_0} \left(\sum_{i=1}^{\infty} \e^{z x_i}{\mathbf 1}_{x_i\geq -n}-1-z x_1{\mathbf 1}_{|x_1|<1}\right) \Lambda(\dd \x).
 $$
 Thanks to \eqref{eq:condlambda1} and \eqref{eq:condlambda2}, we may also define for all $z\in \C$ with $\Re z = \theta$
\begin{equation}\label{eq:kappa}
\kappa^{(\infty)}(z)\coloneqq \frac{\sigma^2}{2} z^2 + az + \int_{{\mathcal P}_{\theta}} \left(\sum_{i=1}^{\infty} \e^{z x_i}-1-z x_1{\mathbf 1}_{|x_1|<1}\right) \Lambda(\dd \x), 
\end{equation}
 and observe that 
$$
\kappa^{(\infty)}(z) = \lim_{n\to \infty} \kappa^{(n)}(z).
$$ 
In particular, we have
\[
 \lim_{n \to \infty} \E\left( \crochet{Z^{(\infty)}_t-Z^{(n)}_t, \ex_{\theta}} \right) = 0,
\]
and since for any ${r} \in \R$
\[
 \E\left( \left|\crochet{Z^{(\infty)}_t, \ex_{\theta+i{r}}} - \crochet{Z^{(n)}_t, \ex_{\theta+i{r}}}\right| \right) \leq \E\left(\crochet{Z^{(\infty)}_t-Z^{(n)}_t, \ex_{\theta}} \right),
\]
we conclude that $\E(\crochet{Z^{(\infty)}_t, \ex_{\theta+i{r}}}) = \e^{t \kappa^{(\infty)}(\theta+i{r})}$.

One can further show that $(Z^{(\infty)}_t: t\in\R_+)$ possesses a c\`adl\`ag version in ${\mathcal P}_{\theta}$ and satisfies the branching property; see Proposition 2 in \cite{BeCF} and its proof for a closely related argument.
Any branching L\'evy process is thus also a nested branching random walk, in the sense that its restriction to dyadic rational times fulfills \eqref{eqn:integrabilityInfinitelyRamified} and (B); more precisely its cumulant function is given by \eqref{eq:kappa}. This corresponds to the second statement of Theorem \ref{thm:main}.

 Our main task in the rest of this work is thus to establish that conversely, every nested branching random walk can be obtained as the restriction to dyadic rational times of some branching L\'evy process.
 
\section{Genealogical structure of a nested branching random walk}
\label{sec:genealogicalStructure}

The purpose of this section is to complete the proof of Theorem \ref{thm:main}, specifically to show that every nested branching random walk arises as the restriction to dyadic rational times of a branching L\'evy process. More precisely, the main result of the section is the following.
\begin{proposition}
\label{prop:derniere}
The càdlàg extension of a nested branching random walk $(Z_t : t \in D)$ satisfying \eqref{eqn:integrabilityInfinitelyRamified} is a branching Lévy process (possibly constructed on some enlarged probability space).
\end{proposition}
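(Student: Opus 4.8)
The plan is to reduce the general statement to the finite birth intensity case already understood in Section~\ref{sec:finiteBirthIntensity}, by constructing on the nested branching random walk $Z$ a genealogy rich enough to perform a truncation that mirrors the one used to \emph{define} branching L\'evy processes in Definition~\ref{D:bLp}. The key conceptual point is the following: a branching L\'evy process with characteristics $(\sigma^2,a,\Lambda)$ is built as the increasing limit of finite-birth-intensity processes $Z^{(n)}$ obtained by killing individuals whenever their ancestral lineage makes a jump $<-n$ or gives birth to a child at distance $>n$ to the left. To invert this, I would first equip the abstract nested branching random walk $Z$ with a genealogical structure, so that one can speak of individual ancestral trajectories and of the relative displacements at birth events. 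Then I would define, for each $n$, a \emph{censored} process $Z^{(n)}$ obtained by removing every individual whose ancestral lineage ever experiences such a large negative jump or displacement. The goal is to prove that each $Z^{(n)}$ is itself (the restriction to $D$ of) a branching L\'evy process with finite birth intensity, and that the $Z^{(n)}$ increase to $Z$.

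First I would construct the genealogy. Using the strong branching property of Proposition~\ref{prop:strongbranchingproperty} iterated along dyadic times, one attaches to each individual present at a dyadic time a label in the Ulam tree $\U$ and an ancestral lineage, tracking the c\`adl\`ag trajectory $t\mapsto \ell_u(t)$ of its spatial position together with the displacements occurring at branching events along the lineage. This is the step where one makes rigorous the informal discussion following Lemma~\ref{L:LP}, namely that the raw point-measure process carries less information than the genealogically enriched one; hence the enlargement of the probability space allowed in the statement. The many-to-one formula (Lemma~\ref{lem:manytooneZ}), now available for all real times, gives a pathwise description of a ``typical'' lineage as the $\theta$-tilt of the L\'evy process $\xi$, which is the quantitative backbone for controlling the effect of censoring.

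Next I would analyze the censored process $Z^{(n)}$. The censoring is a measurable functional of the genealogy: keep an individual if and only if along its whole ancestral lineage no jump is $<-n$ and no recorded sibling displacement is $<-n$. The branching property is preserved under this operation because the censoring rule depends only on the lineage of each individual and is applied independently and homogeneously, so $Z^{(n)}$ is again a nested branching random walk; moreover the truncation forces finite birth intensity, so Proposition~\ref{P:Theta0} and Lemma~\ref{Lparam} identify $Z^{(n)}$ as a branching L\'evy process with finite birth intensity and a L\'evy measure which one should show equals the truncation $\Lambda^{(n)}$ of some fixed $\Lambda$. The crucial consistency check is that the L\'evy measures of the $Z^{(n)}$ form a compatible family, i.e. that $(\Lambda^{(n+1)})^{(n)}=\Lambda^{(n)}$ in the sense of \eqref{eq:proj}; this lets one define the limiting $\Lambda$ on ${\mathcal P}_\theta$ and verify \eqref{eq:condlambda1}--\eqref{eq:condlambda2} from the cumulant $\kappa$ of $Z$, whose real part $\theta$ slice is already known to be of L\'evy-Khintchin type on the dyadic skeleton.

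The main obstacle I expect is precisely the construction of a well-defined, consistent genealogy on the nested branching random walk and the proof that censoring yields exactly the prescribed truncated L\'evy measures, rather than merely \emph{some} finite-birth-intensity process. The difficulty is that the point-measure process $Z$ does not determine the genealogy uniquely (two individuals can occupy the same location), so one must build the lineages by a careful inductive limit over dyadic scales using Proposition~\ref{prop:strongbranchingproperty}, and then show the resulting censored families are nested and convergent. Once $Z^{(n)}\uparrow Z$ with $Z^{(n)}$ identified as branching L\'evy processes with characteristics $(\sigma^2,a,\Lambda^{(n)})$, Definition~\ref{D:bLp} identifies the limit $Z$ as the branching L\'evy process with characteristics $(\sigma^2,a,\Lambda)$, and matching of the cumulants via \eqref{eq:kappa} and Lemma~\ref{Lkumul} completes the proof. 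I would close by checking that no explosion occurs in the limit, which is guaranteed by \eqref{eqn:integrabilityInfinitelyRamified} through the finiteness of $\E(\crochet{Z_t,\ex_\theta})$ uniformly controlling the censored approximations.
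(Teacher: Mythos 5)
Your proposal follows essentially the same route as the paper: equip $Z$ with a consistent genealogy built inductively over dyadic scales (the paper's Section~\ref{subsec:nestedGenealogies}, via Lemma~\ref{L:condlaw} and ranked partitions), use the pathwise many-to-one formula to control ancestral trajectories, censor individuals whose lineage has a jump below $-n$ to obtain nested finite-birth-intensity processes with compatible truncated L\'evy measures, and identify the increasing limit with $Z$ by comparing $\ex_\theta$-masses under \eqref{eqn:integrabilityInfinitelyRamified}. This matches the paper's proof in both structure and the key technical ingredients.
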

Recall that this was established in Section~\ref{sec:finiteBirthIntensity} in the case of processes with finite birth intensity, and that, by construction, a branching L\'evy process is the increasing limit of a sequence of branching L\'evy processes with finite birth intensity. We have to show that similarly, every nested branching random walk is the increasing limit of a sequence of nested branching random walks with finite birth intensities. This will be achieved by showing first that nested branching random walks can be endowed with a {\em natural} \footnote{Beware however that this is by no mean canonical, in the sense that defining the genealogical structure may require some additional randomness, i.e. one may have to work on an enlarged probability space.} genealogical structure. In turn, the genealogical structure will enable us to kill certain atoms depending on the behavior of their ancestral trajectories, and yields the desired approximation by branching processes with finite birth intensities. 

To start with, let us introduce some definitions in this area. A {\em ranked partition} is a sequence $\Pi=(\Pi(j):j \in \N)$ of pairwise disjoint blocks (i.e. subsets) of $\N$. We do {\em not} request the family of blocks $\{\Pi(j):j \in \N\}$ to be a partition of $\N$, the disjoint union of all the blocks $\bigsqcup \Pi(j)$ may be a strict subset of $\N$. 
Next, given some set of times, say $\T\subseteq \R_+$, a {\em genealogical structure} is a family $(\Pi_{s,t}: s,t\in \T\text{ with }s\leq t)$ of ranked partitions which is consistent, in the sense that for all times $r\leq s\leq t$, one has 
\begin{equation} \label{eq:coag}
\forall j\in\N, \ \Pi_{r,t}(j)=\bigsqcup_{i\in \Pi_{r,s}(j)}\Pi_{s,t}(i);
\end{equation} 
we further request that $\Pi_{t,t}$ is simply the ordered partition into singletons (of~$\N$ or $\{1,\ldots,n\}$).
In words, $\Pi_{r,t}(j)$ should be viewed as the block of $\N$ formed by the descent at time $t$ of the individual $j$ at time $r$, and the consistency requirement \eqref{eq:coag} just stresses the plain fact that the latter must coincide with the descent at time $t$ of all individuals at time $s$ which themselves descend from the individual $j$ at time $r$.

\subsection{Natural genealogy of a branching random walk}
\label{subsec:naturalGenealogy}

The purpose of this section is to recall some basic features about discrete genealogies for branching random walks, which will then be useful to construct a genealogical structure in dyadic rational times for nested branching random walks. The presentation is tailored to fit our purpose. 
In this direction, we first recall a construction of branching random walks using for genealogical tree the Ulam tree $\U$.

Let $\rho$ denote the distribution of a random point measure, so we view $\rho$ as the law of some random non-increasing sequence $\x=(x_i: i\in \N)$ in $[-\infty, \infty)$ with $\lim_{i\to \infty} x_i=-\infty$.
We consider 
$( {\x}(u): u\in \U)$ a family indexed by the Ulam tree of i.i.d. copies of ${\x}$. For every $u\in\U$ and $j\in \N$, assign weight $ x_j(u)$ to the vertex $u.j$ and weight $0$ to the ancestor $\emptyset$. For every $u \in \U$, we then write
\[
 X_u = \sum_{j=1}^{|u|} x_{u(j)}(u_{j-1}),
\]
where we recall that $u(j)$ is the $j$-th letter of the word $u$ and $u_j$ the prefix consisting of the $j$ first letters in $u$. For every generation $n\in\Z_+$, we consider the random point measure,
\[
 Z_n = \sum_{u \in \U : |u|=n} \delta_{X_u}.
\]
Further, for all integers $0\leq k\leq \ell$, we define a ranked partition $ \Pi_{k,\ell}$ such that for every $j\in\N$, the block $ \Pi_{k,\ell}(j)$ is given by the ranks at generation $\ell$ of the atoms which descend from the $j$-th largest atom at the $k$-th generation\footnote{If two atoms are at the same position, we order them using the lexicographic order of their index in the genealogical tree $\U$.}. 

This construction yields a branching random walk $ Z=( Z_n: n\in \Z_+)$ with reproduction law $\rho$ and endowed with a genealogical structure $( \Pi_{k,\ell}: 0\leq k \leq \ell)$ that we call {\em natural}, and any branching random walk $Z$ can be obtained by such a construction. Even though the construction is not canonical, in the sense that the family $( {\x}(u): u\in \U)$ cannot always be recovered from $( Z_n: n\in \Z_+)$ and different natural genealogies may sometimes be defined for the same branching random walk, we stress that if $\Pi$ and $\Pi'$ are two natural genealogies of the same branching random walk $Z$, then the pairs $(Z,\Pi)$ and $(Z, \Pi')$ have the same distribution.

For every generation $0\leq k \leq n$ and every $j\in\N$, we write $z_{j,n}$ for the $j$-th largest atom of $Z_n$ and $z_{j,n}(k)$ for its ancestor at generation $k$, defined by
\[
 z_{j,n}(k) = z_{i,k} \quad \mathrm{if} \ j \in \Pi_{k,n}(i).
\]
In particular $z_{j,n}(n)= z_{j,n}$.

To sum up, with this notation, every atom in the branching random walk is uniquely labelled by a couple $(j,n) \in \N \times \Z_+$. The genealogical structure $\Pi$ is a non-anticipative encoding of the genealogical tree of the branching random walk to this labelling. Using this notation, we next recall the pathwise version of the many-to-one identity; see Theorem~1.1 in Shi \cite{ShiSF}.

\begin{lemma}
\label{lem:pathmany21} Let $(Z_n: n\in\Z_+)$ be any branching random walk that fulfills \eqref{eqn:integrabilityInfinitelyRamified} and is endowed with a natural genealogical structure. 
There exists a random walk $S=(S_n: n\in\Z_+)$ such that for every $k \in \N$ and every measurable non-negative function $f:\R^k\to \R$, we have
$$
 \E\left(\sum_{i\geq 1} f(z_{i,k}(1), \ldots, z_{i,k}(k))\right) = \e^{k\kappa(\theta)}\E\left( \e^{-\theta S_k}f(S_1, \ldots, S_k)\right).
$$
\end{lemma}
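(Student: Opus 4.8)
The plan is to prove the pathwise many-to-one formula by a change-of-measure (spinal decomposition) argument, building the random walk $S$ as a single tagged lineage under a tilted measure. First I would fix the branching random walk $(Z_n:n\in\Z_+)$ with its natural genealogical structure, and recall that the relevant tilting is governed by the additive martingale
\[
 W_n = \e^{-n\kappa(\theta)}\crochet{Z_n,\ex_\theta} = \sum_{i\geq 1}\e^{-n\kappa(\theta)+\theta z_{i,n}},
\]
which is a nonnegative martingale with mean $1$ under \eqref{eqn:integrabilityInfinitelyRamified}, exactly because $\E(\crochet{Z_1,\ex_\theta})=\e^{\kappa(\theta)}$ by the definition of $\kappa(\theta)$. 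This $W_n$ is what makes the exponential weight $\e^{-\theta S_k}\e^{k\kappa(\theta)}$ appear on the right-hand side.

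\textbf{Constructing the spine and the random walk $S$.} Next I would introduce the spine: at each generation one distinguishes a tagged individual, choosing among the children of the current spine vertex $u$ with probabilities proportional to $\e^{\theta x_j(u)}$. The increments of the spine's position, $S_k - S_{k-1}$, are i.i.d.\ because the reproduction point measures $\x(u)$ are i.i.d.\ along the Ulam tree; hence $S=(S_n:n\in\Z_+)$ is genuinely a random walk. Its single-step law is the $\theta$-tilt of the intensity measure, i.e.\ the distribution $m_\theta$ introduced in Section~\ref{sec:infinitelyRamifiedPointMeasure}, so that $S_k$ has the same law as $\xi_k$ for the L\'evy process $\xi$ there. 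The key computation is then the standard spinal identity: for any nonnegative measurable $f:\R^k\to\R$,
\[
 \E\!\left(\sum_{i\geq 1} f\bigl(z_{i,k}(1),\ldots,z_{i,k}(k)\bigr)\right)
 = \e^{k\kappa(\theta)}\,\E\!\left(\e^{-\theta S_k} f(S_1,\ldots,S_k)\right),
\]
which is obtained by expanding the sum over $k$-th generation vertices $u$ with $|u|=k$, conditioning successively on each generation, and recognizing that the factor $\e^{\theta x_j(u)}/\E(\crochet{\x,\ex_\theta})$ at each branching is precisely the spine selection probability while the compensating factors telescope into $\e^{k\kappa(\theta)}\e^{-\theta S_k}$. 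Since the identity is a statement purely about one-dimensional generation-$k$ marginals and lineages back through the natural genealogy $\Pi_{j,k}$, it follows directly from Theorem~1.1 in \cite{ShiSF}, which is cited in the statement; so in practice I would either invoke that theorem directly or reproduce the one-line induction above.

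\textbf{Main obstacle.} The genuinely routine part is the algebra of the telescoping weights; the only point demanding care is \emph{matching the labelling}. The functional $f$ is evaluated along $z_{i,k}(1),\ldots,z_{i,k}(k)$, the positions of the ancestors of the $i$-th ranked generation-$k$ atom, where ancestry is read off from the natural genealogical structure $\Pi$ rather than from the Ulam-tree addresses. I would therefore verify that summing over ranked atoms $i$ together with their $\Pi$-ancestral lineage $(z_{i,k}(1),\ldots,z_{i,k}(k))$ gives exactly the same total as summing over Ulam-tree vertices $u$ with $|u|=k$ together with the address-ancestral positions $(X_{u_1},\ldots,X_{u_k})$. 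This is immediate from the construction of $\Pi$ in Section~\ref{subsec:naturalGenealogy}: the block structure is precisely the re-indexing of $\U$-leaves by rank, so the two sums range over the same multiset of lineages and the many-to-one identity is label-invariant. Once this identification is in place, the result is just Shi's many-to-one theorem transcribed into the present notation, and the proof is complete.
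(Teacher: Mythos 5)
The paper gives no proof of this lemma at all --- it is recalled as Theorem~1.1 of Shi's lecture notes --- and your proposal, which either invokes that theorem directly or reproduces the standard telescoping induction over generations together with the (correct) observation that the ranked atoms with their $\Pi$-ancestral lineages form the same multiset of lineages as the Ulam-tree vertices with their address-ancestral positions, is correct and follows essentially the same route. One caveat on your spine description: under the original measure $\P$, selecting a child of the spine vertex $u$ with probability proportional to $\e^{\theta x_j(u)}$ produces i.i.d.\ increments whose common law is $\E\bigl(\sum_j \e^{\theta x_j}\delta_{x_j}/\crochet{\x,\ex_\theta}\bigr)$ rather than $m_\theta$ (the spine has step law $m_\theta$ only under the measure tilted by the additive martingale $W_n$), but this slip is harmless here since the telescoping induction --- or Shi's theorem --- delivers the random walk with step law $m_\theta$ without any reference to the spine.
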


\subsection{Nested genealogies for nested branching random walks}
\label{subsec:nestedGenealogies}

In this section, we consider a nested branching random walk (in dyadic rational times) $Z=(Z_t: t\in D)$ and shall construct, up to a possible enlargement of the probability space, a natural genealogical structure for this process. Roughly speaking, discrete time skeletons of a nested branching random walk are branching random walks, and our aim is to show that these discrete time skeletons can be equipped with compatible natural genealogies, in the sense that the genealogical structure of a coarser skeleton results from the restriction of the genealogical structure of a finer skeleton.

The construction of a natural genealogical structure for $Z$ relies on the following easy consequence of the branching property (B) combined with the existence of conditional distributions; the proof is straightforward and therefore omitted. 

\begin{lemma}\label{L:condlaw}
\label{lem:genealogyConstruction} Let $(X,Y)$ be a pair of a random point measures distributed as $(Z_{2t},Z_t)$ for some $t\in D$. Up to enlarging the probability space, one can construct a sequence $Y^1, Y^2, \ldots$ of independent copies of $Y=(y_k: k\in\N)$
such that 
$$X=\sum_{k\geq 1} \tau_{y_k} Y^k.$$

\end{lemma}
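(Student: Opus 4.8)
The plan is to establish Lemma~\ref{L:condlaw} directly from the branching property (B) together with the existence of regular conditional distributions on the Polish space $\mathcal{P}_\theta$. First I would recall that (B) applied with $s=t$ gives the distributional identity
\[
Z_{2t} \egaldistr \sum_{k\geq 1} \tau_{y_k} Y^k, \qquad \y = Z_t,
\]
where $(Y^k: k\in\N)$ are i.i.d.\ copies of $Z_t$ independent of $Z_t$. The content of the lemma is to upgrade this identity in \emph{law} to an almost-sure construction on a suitable enlarged probability space, in which the given pair $(X,Y)$ (distributed as $(Z_{2t},Z_t)$) is realized together with an explicit i.i.d.\ family $Y^1,Y^2,\ldots$ witnessing the decomposition pathwise.

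The key steps would be as follows. Since $\mathcal{P}_\theta$ is Polish (established in Section~\ref{sec:infinitelyRamifiedPointMeasure}), all the relevant probability kernels exist. I would first describe the law of the \emph{triple} $\bigl(Z_{2t}, Z_t, (Y^k)_{k\in\N}\bigr)$ built on the original space from the branching property: here $(Y^k)$ are i.i.d.\ copies of $Z_t$ independent of $Z_t$, and $Z_{2t}$ is defined as $\sum_{k} \tau_{y_k} Y^k$ with $\y = Z_t$. Call the law of this triple $\mathbb{Q}$ on $\mathcal{P}_\theta \times \mathcal{P}_\theta \times \mathcal{P}_\theta^{\N}$; by (B) its first two marginals agree with the law of $(Z_{2t},Z_t)$. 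Next, disintegrate $\mathbb{Q}$ with respect to the pair of first two coordinates: there is a regular conditional distribution $\kappa\bigl((x,y),\dd (y^k)_k\bigr)$ giving the conditional law of $(Y^k)_k$ given $(Z_{2t},Z_t)=(x,y)$. Finally, on an enlarged space carrying the prescribed pair $(X,Y)$, I would sample $(Y^k)_k$ from $\kappa\bigl((X,Y),\cdot\bigr)$; by construction the resulting triple has law $\mathbb{Q}$, so the $Y^k$ are i.i.d.\ copies of $Y$ and $X = \sum_{k\geq 1}\tau_{y_k} Y^k$ almost surely, as required.

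The main obstacle, and the reason this requires an enlargement, is that the decomposition is \emph{not} measurable with respect to $\sigma(X,Y)$ alone: distinct atoms of the children point measures may land at the same location, so one cannot in general read off the ranked sequences $Y^k$ from the sum $X$ and the parent positions $Y$. This is precisely the non-canonicity already flagged in the discussion at the end of Section~\ref{subsec:firstBranchingTime} and in Section~\ref{subsec:naturalGenealogy}. The conditional-distribution device circumvents this cleanly: the regular conditional law $\kappa$ supplies exactly the additional independent randomness needed to disambiguate the genealogical assignment, while guaranteeing that the joint law of $(X,Y,(Y^k)_k)$ matches $\mathbb{Q}$ on the original space. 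Because the statement only asserts existence up to enlarging the probability space, no further measurability or uniqueness claims are needed, and the proof is genuinely routine once the disintegration on the Polish space $\mathcal{P}_\theta$ is invoked — which is consistent with the authors' remark that the proof is straightforward and omitted.
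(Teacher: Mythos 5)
Your argument is correct and is exactly the route the paper intends: it invokes the branching property (B) with $s=t$ and then uses the existence of regular conditional distributions on the Polish space $\mathcal{P}_\theta$ (Kallenberg's transfer theorem) to realize the i.i.d.\ family $(Y^k)_{k\in\N}$ on an enlarged space carrying the given pair $(X,Y)$, which is precisely what the authors mean when they describe the lemma as an ``easy consequence of the branching property (B) combined with the existence of conditional distributions'' and omit the proof. Your remark on why the enlargement is genuinely needed (non-measurability of the decomposition with respect to $\sigma(X,Y)$) matches the paper's own discussion of non-canonicity.
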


We start by considering the branching random walk $Z^0\coloneqq(Z_n: n\in \Z_+)$ and use repeatedly (B) to construct (possibly on some enlarged probability space) a natural genealogical structure $\Pi^0=(\Pi^0_{k,\ell}: 0\leq k \leq \ell)$. 
Next, using Lemma \ref{L:condlaw} with $t=1/2$ enables us to construct similarly (possibly on some further enlargement of the probability space), 
 a natural genealogical structure $\Pi^1=(\Pi^1_{k,\ell}: 0\leq k \leq \ell)$ for the branching random walk $Z^1\coloneqq(Z_{n/2}: n\in \Z_+)$ such that $\Pi^1_{2k, 2\ell}=\Pi^0_{k,\ell}$ for all integers $0\leq k\leq \ell$. By iteration, for each $m\in\N$, we can equip the branching random walk $Z^m\coloneqq(Z_{n.2^{-m}}: n\in \Z_+)$ with a natural genealogical structure $\Pi^m$ such that $\Pi^m_{2k, 2\ell}=\Pi^{m-1}_{k,\ell}$ for all integers $0\leq k \leq \ell$. 
This enables us to define unambiguously a genealogical structure in dyadic rational times $\Pi=(\Pi_{r,s}: 0\leq r \leq s \text{ and }r,s\in D)$ by
 $\Pi_{r,s}=\Pi^n_{k,\ell}$ for any integers $k$ and $n$ with $r=k.2^{-n}$ and $s=\ell.2^{-n}$.

We are now able to establish the following pathwise version of the many-to-one formula for nested branching random walks. For every $j\in\N$ and dyadic rational times $s,t\in D$ with $0\leq s \leq t$, we write
$z_{j,t}$ for the $j$-th largest atom of $Z_t$ and $z_{j,t}(s)=z_{k,s}$ with $k$ being the unique integer such that $j\in
\Pi_{s,t}(k)$. Recall also the notation used in Lemma \ref{lem:manytooneZ}; in particular $\xi$ denotes a L\'evy process with characteristic exponent $\Psi$ defined by \eqref{eq:psi}. Similarly, in this notation, every atom in $Z$ is uniquely labelled by a pair $(j,t) \in \N \times D$, and the genealogical structure $\Pi$ encodes the genealogical tree of this labelling.

\begin{lemma} [Pathwise many-to-one formula]
\label{lem:manytooneTrajectorial} The following assertions hold with probability one for every dyadic rational time $t$:

\begin{enumerate}
\item For every $j\in\N$, the trajectory defined for $s\in[0,t]\cap D$ by $s\mapsto z_{j,t}(s)$, possesses a c\`adl\`ag extension to $s\in[0,t]$.

\item For every non-negative measurable functional on the space of c\`adl\`ag paths on $[0,t]$, we have the pathwise many-to-one formula:
\[
 \E\left( \sum_{j \in \N} f(z_{j,t}(s): 0\leq s \leq t) \right) = \E\left( \e^{-\theta \xi_t + t \kappa(\theta)} f(\xi_s: 0\leq s \leq t) \right).
\]
\end{enumerate}
\end{lemma}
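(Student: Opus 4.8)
The plan is to establish both assertions by approximation from the discrete-time pathwise many-to-one formula of Lemma~\ref{lem:pathmany21}, applied to the skeletons $Z^m=(Z_{n2^{-m}}:n\in\Z_+)$, and then pass to the limit $m\to\infty$. First I would fix a dyadic rational time $t$ and write $t=\ell 2^{-m}$ for every large enough $m$. Because the genealogies $\Pi^m$ are compatible ($\Pi^m_{2k,2\ell}=\Pi^{m-1}_{k,\ell}$), the ancestral values of a fixed atom $z_{j,t}$ along the coarser grid $2^{-m}\Z_+\cap[0,t]$ are consistently refined as $m$ grows. Thus for each atom the map $s\mapsto z_{j,t}(s)$ is defined on the full dyadic set $[0,t]\cap D$, and is nothing but the ancestral lineage of the atom read off from $\Pi$. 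The heart of assertion~(1) is to show this dyadic trajectory admits a càdlàg extension; I would obtain this from the many-to-one formula itself, which identifies the law of a ``typical'' lineage with that of the Lévy process $\xi$ biased by $\e^{-\theta\xi_t+t\kappa(\theta)}$, and a Lévy process has càdlàg paths. More precisely, applying Lemma~\ref{lem:pathmany21} to the skeleton $Z^m$ with the indicator of the (bad) event that the sampled dyadic path has oscillations incompatible with a càdlàg extension, and using that the corresponding event for $\xi$ has probability zero, shows the expected number of atoms whose lineage fails to be regularisable is zero, hence a.s. no such atom exists.

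For assertion~(2), the strategy is to first prove the formula for functionals $f$ that depend only on the values of the path at finitely many dyadic times $0\le s_1<\dots<s_r\le t$, and then extend by a monotone-class/approximation argument. At the discrete level, for each $m$ large enough that all $s_i$ lie in $2^{-m}\Z_+$, Lemma~\ref{lem:pathmany21} applied to $Z^m$ with the function $(z_1,\dots,z_{\ell})\mapsto f(z_{s_1 2^m/\ell\,\cdots})$ (evaluating $f$ at the relevant coordinates) gives
\[
\E\!\left(\sum_{j}f\big(z_{j,t}(s_1),\dots,z_{j,t}(s_r)\big)\right)
=\e^{t\kappa(\theta)}\,\E\!\left(\e^{-\theta S^{(m)}_{\ell}}\,f\big(S^{(m)}_{s_1 2^m},\dots,S^{(m)}_{s_r 2^m}\big)\right),
\]
where $S^{(m)}$ is the random walk associated to $Z^m$. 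By the reasoning already used in Lemma~\ref{lem:manytoonePrimal}, the law of $(S^{(m)}_{k})_{k}$ along the grid $2^{-m}\Z_+$ converges to that of $(\xi_{k2^{-m}})_k$; since the $s_i$ are fixed dyadic times, $S^{(m)}_{s_i2^m}\to\xi_{s_i}$ and $S^{(m)}_\ell\to\xi_t$ in law as $m\to\infty$, so the right-hand side converges to $\E(\e^{-\theta\xi_t+t\kappa(\theta)}f(\xi_{s_1},\dots,\xi_{s_r}))$. The left-hand side is independent of $m$ by the compatibility of the genealogies, which yields the finite-dimensional formula. Finally, using part~(1) (the lineages and $\xi$ both being càdlàg) I would upgrade from finite-dimensional marginals to arbitrary bounded continuous functionals of the whole path by a standard approximation of a càdlàg functional by its evaluations along a refining sequence of dyadic partitions, together with dominated convergence justified by the $\theta$-exponential integrability encoded in \eqref{eqn:integrabilityInfinitelyRamified}; monotone convergence then extends the identity to all non-negative measurable functionals.

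The main obstacle I anticipate is the justification of the limit interchange on the left-hand side, i.e. controlling the infinite sum over atoms $\sum_j f(\cdots)$ uniformly enough to pass to the path limit. The subtlety is that the number of atoms is infinite and the functional $f$ is only measurable, so one cannot naively commute the limit $m\to\infty$ with the sum. The clean way around this is precisely the structure of the many-to-one identity: the sum over atoms is \emph{defined} through its expectation, which the formula reexpresses as a single $\xi$-expectation, so the whole argument is carried at the level of the sampled lineage (a single path) rather than the full population. Establishing the càdlàg regularity in part~(1) rigorously—ruling out pathological lineages via a null-expectation argument and the regularity of $\xi$—is therefore the genuinely delicate step, and it is what makes the subsequent functional extension in part~(2) legitimate.
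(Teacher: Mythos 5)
Your proposal follows essentially the same route as the paper: finite-dimensional dyadic marginals from Lemma~\ref{lem:pathmany21} applied to the skeletons, a monotone-class extension to measurable functionals of the whole dyadic-time path, part~(1) obtained by plugging in the indicator of the ``no c\`adl\`ag extension'' event (measurable via upcrossing counts) and using that $\xi$ is a.s. c\`adl\`ag, and a final monotone-class step to functionals on the c\`adl\`ag path space. Two remarks. First, the limit $m\to\infty$ you invoke for the finite-dimensional identity is superfluous and, as phrased, shaky: convergence in law of $(S^{(m)}_{s_i2^m})_i$ to $(\xi_{s_i})_i$ would not let you pass to the limit in $\E\bigl(\e^{-\theta S^{(m)}_\ell}f(\cdots)\bigr)$ for a merely measurable $f$. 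What saves you is that no limit is needed — by the argument in Lemma~\ref{lem:manytoonePrimal}, the spine random walk of the skeleton $Z^m$ has step law equal to that of $\xi_{2^{-m}}$, so $(S^{(m)}_k)_k$ is \emph{equal in law} to $(\xi_{k2^{-m}})_k$ and the finite-dimensional formula holds exactly for each $m$. Second, your part~(1) argument applies the many-to-one identity to an event depending on the entire dyadic path, i.e.\ on infinitely many coordinates, which Lemma~\ref{lem:pathmany21} alone does not cover; you must first carry out the monotone-class extension to $\R^{[0,t]\cap D}$ (or exhaust the bad event by finite-dimensional upcrossing events and use monotone convergence), so the logical order should be: finite-dimensional identity, extension to dyadic-path functionals, then part~(1), then extension to c\`adl\`ag functionals — which is exactly how the paper arranges it.
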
 

\begin{proof} Consider first a functional $f$ that only depends on the trajectory evaluated at finitely many dyadic rational instants. Then the many-to-one formula in the statement immediately follows from Lemma \ref{lem:pathmany21}. Then an appeal to the monotone class theorem enables us to extend this to all measurable functionals
$f:\R^{D_t}\to \R_+$, where $D_t=[0,t] \cap D$ and $\R^{D_t}$ is endowed with the sigma-algebra generated by the coordinate maps $\omega \mapsto \omega(s)$ for $\omega \in \R^{D_t}$ and $s\in D_t$. 

Since L\'evy processes have c\`adl\`ag paths a.s., our first claim follows taking for $f$ the indicator function of $\{\omega\in \R^{D_t}: \omega \text{ has no c\`adl\`ag extension}\}$ (that this set is indeed measurable is readily seen by considering the number of up-crossings of a path $\omega\in \R^{D_t}$ from $a$ to $b$ with $a<b$ rational numbers). 
The many-to-one formula can then be extended to non-negative measurable functionals on the space of c\`adl\`ag paths on $[0,t]$ by another application of the monotone class theorem. 
\end{proof}

\begin{remark}
Recall from Proposition \ref{prop:cadlag} that $Z$ admits a c\`adl\`ag extension on $\mathcal{P}_\theta$. The first point of the previous lemma gives a slightly distinct statement: to each atom can be associated a c\`adl\`ag ancestral trajectory.
\end{remark}

More generally, a nested branching random walk started from an arbitrary point measure $\mu$ can also be endowed with a natural genealogical structure. 
The case when $\mu=\delta_x$ is a Dirac point mass is of course trivial, as the process is then simply obtained by translating by $x$ all the atoms of $Z$, which does not affect its genealogical structure. In the general case $\mu=\sum_{i=1}^{\infty} \delta_{x_i}$, one needs to combine the genealogies of independent copies of $Z$ started from a Dirac point mass. We refrain from giving a precise description as ordering children from different ancestors would force us to introduce some cumbersome notation. 

We now conclude this section by mentioning that the simple branching property (B) extends to natural genealogies. Specifically, for every $t\in D$, conditionally on $Z_t=\mu$, the process 
$(Z_{t+s}: s\in D)$ equipped with the shifted genealogical structure $(\Pi_{t+r, t+s}: 0\leq r \leq s)$ is independent of $(Z_s: 0\leq s \leq t)$ and $(\Pi_{r,s}: 0\leq r \leq s \leq t)$
and has the same distribution as the nested branching random walk started from $\mu$ and equipped with a natural genealogical structure. Indeed, the same statement holds in the setting of branching random walks, and this easily yields the version for dyadic rational times. 

\subsection{Censoring nested branching random walks}
\label{subsec:censoringBranchingProcesses}

We are now in good shape to construct censored versions of nested branching random walks, roughly speaking by killing individuals at the first time when their ancestral trajectory has a large negative jump. Specifically, fix some threshold $n>0$ and consider for every $t\in D$ the random point measure
$$Z^{(n)}_t\coloneqq \sum_{k=1}^{\infty} {\mathbf 1}_{\{\Delta z_{j,t}(s)>-n \text{ for all } 0\leq s \leq t\}} \delta_{z_{j,t}}\,,$$
where $\Delta z_{j,t}(s)\coloneqq z_{j,t}(s)-z_{j,t}(s-)$ denotes the possible jump at time $s$ of the ancestral trajectory of the $j$-th atom at time $t$. 

\begin{lemma}\label{L:censorbp} For every $b>0$, the censored process $Z^{(n)}\coloneqq (Z^{(n)}_t: t\in D)$ is a nested branching random walk with finite birth intensity.
\end{lemma}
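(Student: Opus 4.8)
The plan is to verify that the censored process $Z^{(n)}$ satisfies the two defining features of a nested branching random walk with finite birth intensity, namely the simple branching property (B) and the finiteness of the birth intensity (i.e. condition \eqref{eqn:integrabilityInfinitelyRamified} with $\theta=0$). The key structural input is that the censoring rule—``kill an atom at the first time its ancestral trajectory makes a jump $<-n$''—is defined purely in terms of the ancestral lineages encoded by the natural genealogical structure $\Pi$, and is \emph{hereditary}: if an individual is killed then so are all of its descendants, since they inherit the offending jump in their own ancestral trajectory. This is what makes the censored process again a branching random walk rather than merely a thinning.

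First I would establish the branching property. Fix $s,t\in D$ and decompose $Z^{(n)}_{s+t}$ according to the atoms present at time $s$. Using the extension of the simple branching property to natural genealogies recorded at the end of Section~\ref{subsec:nestedGenealogies}, conditionally on $Z_s=\mu$ the process $(Z_{s+r}: r\in D)$ together with its shifted genealogy is distributed as a nested branching random walk started from $\mu$, independent of $\calF_s$. The crucial observation is that for an atom $z_{j,s+t}$ at time $s+t$, the event ``$\Delta z_{j,s+t}(r)>-n$ for all $0\leq r\leq s+t$'' factorizes, by consistency \eqref{eq:coag} of $\Pi$, into the event that its ancestor at time $s$ survived the censoring up to time $s$ \emph{and} the event that the sub-trajectory on $[s,s+t]$ has no jump $<-n$. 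Hence an atom survives in $Z^{(n)}_{s+t}$ if and only if its time-$s$ ancestor survives in $Z^{(n)}_s$ and it survives the censoring in the corresponding independent copy of $Z$ emanating from that ancestor. Therefore, writing $\x=Z^{(n)}_s$, we get $Z^{(n)}_{s+t}=\sum_{k}\tau_{x_k}(Z^{(n)})^k_t$ with $((Z^{(n)})^k_t)_k$ i.i.d.\ copies of $Z^{(n)}_t$ independent of $\calF_s$, which is exactly (B). Equivalently, each discrete-time skeleton of $Z^{(n)}$ is a branching random walk, obtained by the hereditary pruning of the natural genealogy of the corresponding skeleton of $Z$.

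Next I would check finite birth intensity, i.e. that $\E(\crochet{Z^{(n)}_1,\mathbf 1})<\infty$. This follows directly from the pathwise many-to-one formula of Lemma~\ref{lem:manytooneTrajectorial}(2): applying it to the non-negative functional $f(\omega)=\mathbf 1_{\{\Delta\omega(s)>-n \text{ for all } 0\leq s\leq 1\}}$, we obtain
\[
\E(\crochet{Z^{(n)}_1,\mathbf 1})=\E\!\left(\sum_{j}\mathbf 1_{\{\Delta z_{j,1}(s)>-n\,\forall s\}}\right)=\e^{\kappa(\theta)}\,\E\!\left(\e^{-\theta\xi_1}\mathbf 1_{\{\Delta\xi_s>-n\,\forall 0\leq s\leq 1\}}\right),
\]
and the right-hand side is bounded by $\e^{\kappa(\theta)}\E(\e^{-\theta\xi_1})<\infty$ since $\xi$ has a finite exponential moment of order $\theta$ by construction. (One should also confirm positivity of the intensity, i.e.\ that $Z^{(n)}_1\neq\varnothing$ with positive probability; this holds because the censoring event has positive probability—the ancestral L\'evy process $\xi$ avoids jumps $<-n$ on $[0,1]$ with positive probability.) This gives \eqref{eqn:integrabilityInfinitelyRamified} with $\theta=0$, as required.

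The main obstacle, and the point deserving the most care, is the measurability and well-posedness of the censoring rule together with the factorization of the survival event across the branching. One must be sure that the ancestral trajectories $s\mapsto z_{j,t}(s)$ are genuinely the c\`adl\`ag paths furnished by Lemma~\ref{lem:manytooneTrajectorial}(1) so that the jumps $\Delta z_{j,t}(s)$ are unambiguously defined, and that the hereditary structure of the censoring is faithfully reflected by the consistency relation \eqref{eq:coag}: the descent of a killed individual must be exactly the set of atoms whose lineage passes through the offending jump, so that killing them does not disturb the genealogy of the survivors. Once this compatibility between the pruning of $\Pi$ and the branching decomposition is pinned down, both the branching property and the intensity bound follow mechanically from the tools already developed; the parameter $b$ in the statement plays no role in $Z^{(n)}$ and is presumably a typographical artifact.
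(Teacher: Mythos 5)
There is a genuine gap in your verification of the finite birth intensity, and it occurs at the one step where the censoring actually has to do some work. After correctly applying the pathwise many-to-one formula to get
\[
\E\bigl(\crochet{Z^{(n)}_1,\mathbf 1}\bigr)=\e^{\kappa(\theta)}\,\E\Bigl(\e^{-\theta\xi_1}\,\ind{\Delta\xi_s>-n\ \forall\, 0\leq s\leq 1}\Bigr),
\]
you bound the right-hand side by $\e^{\kappa(\theta)}\E(\e^{-\theta\xi_1})$ and assert this is finite because ``$\xi$ has a finite exponential moment of order $\theta$ by construction.'' But the many-to-one formula with $f=\mathbf 1$ gives precisely $\E(\e^{-\theta\xi_1})=\e^{-\kappa(\theta)}\E(\crochet{Z_1,\mathbf 1})$, i.e.\ the normalized expected total population of the \emph{uncensored} process at time $1$. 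In the infinite birth intensity case --- the only case where this lemma is needed --- that quantity is infinite; what is finite by construction is $\E(\crochet{Z_1,\ex_\theta})=\e^{\kappa(\theta)}$, which corresponds to the exponential moment of order $0$ of $\xi$, not of order $-\theta$. So discarding the indicator throws away exactly the information that makes the expectation finite.

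The correct argument, which is the one the paper uses, keeps the indicator and interprets it probabilistically: restricting $\xi$ to paths with no jump below $-n$ on $[0,1]$ amounts to replacing $\xi$ by a L\'evy process $\xi^{\dagger}$ with L\'evy measure $\ind{x>-n}\lambda(\dd x)$, killed at an independent exponential time of parameter $\lambda((-\infty,-n))$. Since the L\'evy measure of $\xi^{\dagger}$ vanishes on $(-\infty,-n)$, the jumps of $\xi^{\dagger}$ are bounded below, and therefore $\E(\e^{-\theta\xi^{\dagger}_1})<\infty$ by the standard criterion for exponential moments of L\'evy processes in terms of the L\'evy measure (Theorem~25.3 in Sato). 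This is the step you are missing. Your treatment of the branching property is essentially the paper's (which disposes of it in one sentence via the genealogical extension of (B)), and your observations about heredity of the censoring, consistency of $\Pi$, and the spurious parameter $b$ are all sound; only the finiteness argument needs to be repaired as above.
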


\begin{proof} The branching property extended to natural genealogies that was discussed at the end of the preceding section readily entails that the branching property (B) of $Z$ is transferred to the censored process $Z^{(n)}$. 

Next, using Lemma \ref{lem:manytooneTrajectorial}, we have
\[
 \E\left( \crochet{Z^{(n)}_t,{\mathbf 1}} \right) = \E\left( \e^{-\theta \xi_t + t \kappa(\theta)} \ind{\Delta\xi_s > -n \text{ for all } 0\leq s \leq t} \right).
\]
Recall that $\xi$ is a L\'evy process, denote its L\'evy measure by $\lambda(\dd x)$. Killing $\xi$ at the first instant when it has a negative jump smaller than $-n$ produces a L\'evy process, say $\xi^{\dagger}$,
with L\'evy measure $\lambda^{\dagger}(\dd x)\coloneqq {\mathbf 1}_{\{x>-n\}}\lambda(\dd x)$ and further killed at an independent exponential time with parameter $\lambda((-\infty,-n))$, say $T^{\dagger}$.
Hence we have 
\[
 \E\left( \crochet{Z^{(n)}_t,{\mathbf 1}} \right) = \exp(t\kappa(\theta))\E\left( \e^{-\theta \xi^{\dagger}_t } , t < T^{\dagger} \right).
\]
 The fact that the L\'evy measure $\lambda^{\dagger}$ is zero on $(-\infty,-n)$ ensures the finiteness of the expectation in the right-hand side; see for instance Theorem 25.3 in Sato \cite{Sato}, proving that the censored process $Z^{(n)}$ has finite birth intensity. 
\end{proof}
We now have all the ingredients needed to prove of Proposition \ref{prop:derniere}.

\subsection{Proof of Proposition \ref{prop:derniere}}
\label{subsec:branchingProcAreBranchingLevy}

Let $Z=(Z_t: t\in D)$ be a nested branching random walk endowed with a natural genealogy
and construct the sequence of censored processes $Z^{(n)}$ as in the preceding section. Recall from Lemma \ref{L:censorbp} that the latter are nested branching random walks with finite birth intensity, and thus their c\`adl\`ag extension to real times are branching L\'evy processes, as it was shown in Section~\ref{sec:finiteBirthIntensity}. 
We write $(\sigma_n^2, a_n, \Lambda_n)$ for their characteristics.

Next observe from the construction of the censored processes, that for every $n\leq n'$, 
$Z^{(n)}$ results from $Z^{(n')}$ by killing the children (and of course deleting also their descent) which are born at distance greater than $n$ at the left of their parent,
which is precisely the transformation appearing in Lemma \ref{L:nestBLP}. This entails that $\sigma^{(n)} = \sigma^{(n')}$, $a_n=a_{n'}$, and that 
$\Lambda_n$ coincides with the image of $\Lambda_{n'}$ by the transformation $\x\mapsto \x^{(n)}$ defined in Section~\ref{subsec:branchingLevyProcessFiniteBirthIntensity}, i.e. which consists in sending atoms in $(-\infty,-n)$
to the cemetery state $-\infty$. This enables us to define unambiguously $\sigma^2\coloneqq \sigma_n^2$, $a\coloneqq a_n$ and a measure $\Lambda$ on ${\mathcal P}$ such that, for every $n\in\N$, the image of $\Lambda$ by the transformation $\x\mapsto \x^{(n)}$ is $\Lambda_n$. 

Plainly, $Z^{(n)}_1\leq Z_1$, and \eqref{eqn:integrabilityInfinitelyRamified} thus ensures that 
$$\sup_{n\geq 0} \ln \E\left(\crochet{Z^{(n)}_1, \ex_{\theta}}\right)= \sup_{n\geq 0} \kappa_n(\theta)<\infty,$$ 
where $\kappa_n$ denotes the cumulant of $Z^{(n)}$. Recall that the latter is given for purely imaginary complex numbers by the formula in Lemma \ref{Lparam}.5, so that by analytic continuation, we have
$$\kappa_n(\theta) = \frac{\sigma^2}{2} \theta^2 + a \theta + \int\left(\sum_{i=1}^{\infty} \e^{\theta x^{(n)}_i}-1+\theta x_1{\mathbf 1}_{|x_1|<1}\right) \Lambda(\dd x).$$
By letting $n\to \infty$, we now see that the measure $\Lambda$ has to fulfill the requirements \eqref{eq:condlambda1} and \eqref{eq:condlambda2}.

From Lemma \ref{L:nestBLP} and Definition \ref{D:bLp}, we know that the increasing limit of the sequence of censored processes
$$Z^{(\infty)}_t \coloneqq \lim_{n\to \infty} Z^{(n)}_t \,, \qquad t\geq 0$$
is a branching L\'evy process with characteristics $(\sigma^2, a, \Lambda)$, and it just remains to identify the latter with $Z$ on dyadic rational times. This follows from the monotone convergence theorem and the pathwise many-to-one formula, as
\begin{align*}
 \E(\crochet{Z^{(\infty)}_t,\ex_\theta}) &= \lim_{n \to \infty} \E(\crochet{Z^{(n)},\ex_\theta})\\
 &= \lim_{n \to \infty} \e^{t \kappa(\theta)}\P(\inf_{0\leq s \leq t} \Delta \xi_s > -n) \\
 &= \E(\crochet{Z_t,\ex_\theta}).
\end{align*}
using that $\max_{0\leq s \leq t}|\Delta \xi_s| <\infty$ a.s. and $\e^{t \kappa(\theta)} = \E(\crochet{Z_t,\ex_\theta})$. Since $\crochet{Z_t,\ex_\theta} \geq \crochet{Z_t^{(\infty)},\ex_\theta}$ a.s. for all $t \geq 0$, we conclude that $Z=Z^{(\infty)}$ a.s.

This completes the proof of Proposition \ref{prop:derniere}. We now observe that the first part of Theorem \ref{thm:main} follows from Propositions \ref{thm:irpmToNbrw} and \ref{prop:derniere}.

\begin{remark}
\label{rem:uniqueness}
Note there exists a unique triplet of characteristics $(\sigma^2,a,\Lambda)$ associated to a nested branching random walk $Z$. Indeed, this is the case for branching L\'evy processes with finite birth intensity, hence $(\sigma^2,a,\Lambda_n)$ is uniquely defined for all $n \in \N$. In particular, it shows that $(\sigma^2,a,\Lambda)$ indeed characterizes the law of the branching Lévy process.
\end{remark}

\subsection{Application to infinitely ramified point measures on a half-line}
\label{subsec:branchingSubordinator}

We say that a random point measure ${\mathcal Z}$ is supported on $\R_-$ if ${\mathcal Z}((0,\infty))=0$ a.s., and shall now conclude this article by characterizing infinitely ramified point measures having that property. Theorem \ref{thm:main} entails that the distribution of an infinitely ramified point measure is determined by a triple $(\sigma^2,a,\Lambda)$ with $\sigma^2\geq 0$, $a\in\R$ and $\Lambda$ a measure on ${\mathcal P}$ satisfying \eqref{eqn:integrabilityPi3}, \eqref{eqn:integrabilityPi}, and \eqref{eqn:integrabilityPi2}, which we call therefore the characteristics of ${\mathcal Z}$. Equivalently, $(\sigma^2,a,\Lambda)$ is the characteristic triple of the branching L\'evy process $(Z_t: t\in \R_+)$ such that $Z_1$ has the same law as ${\mathcal Z}$.

\begin{corollary} An infinitely ramified point measure with characteristic triple $(\sigma^2,a,\Lambda)$ is supported on $\R_-$ if and only if the following three conditions are fulfilled:
\begin{itemize}
 \item $\sigma^2=0$ and $\Lambda\left(\left\{\x\in{\mathcal P}_{\theta}: x_1>0\right\}\right)=0$,
 \item $\int_{{\mathcal P}_{\theta}}(1\wedge |x_1|) \Lambda(\dd \x) <\infty$,
 \item $a+\int_{{\mathcal P}_{\theta}} x_1{\mathbf 1}_{\{x_1>-1\}}\Lambda(\dd \x) \leq 0.$
\end{itemize}
\end{corollary}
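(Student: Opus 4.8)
The plan is to reduce the statement, via the many-to-one formula, to a single property of the auxiliary L\'evy process $\xi$ (with characteristic exponent $\Psi$), and then to read that property off the explicit L\'evy--Khintchin triple of $\xi$, which is itself encoded in the cumulant $\kappa$.

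First I would apply the many-to-one formula of Lemma~\ref{lem:manytooneZ}, extended to real times by Proposition~\ref{prop:cadlag}, with the non-negative function $f=\mathbf 1_{(0,\infty)}$ and $t=1$, to the branching L\'evy process $(Z_t)$ whose value at time $1$ has the law of $\mathcal Z$. This gives
\[
\E\left(\crochet{Z_1,\mathbf 1_{(0,\infty)}}\right)=\e^{\kappa(\theta)}\,\E\left(\e^{-\theta\xi_1}\mathbf 1_{\{\xi_1>0\}}\right).
\]
Since $\crochet{Z_1,\mathbf 1_{(0,\infty)}}=Z_1((0,\infty))$ is a non-negative integer-valued random variable, it vanishes almost surely if and only if its expectation is zero; and as $\e^{-\theta\xi_1}>0$, the right-hand side vanishes if and only if $\P(\xi_1>0)=0$. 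Hence $\mathcal Z$ is supported on $\R_-$ if and only if $\xi_1\le0$ almost surely. I would then invoke the classical fact that a L\'evy process with $\xi_1\le0$ a.s. is the negative of a subordinator: writing $\xi_1$ as a sum of $n$ i.i.d. copies of $\xi_{1/n}$ forces $\xi_{1/n}\le0$ a.s. for every $n$, so every increment is non-positive and, by c\`adl\`ag paths, $\xi$ has non-increasing trajectories. Thus the support condition is equivalent to $\xi$ being a non-increasing L\'evy process.

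It then remains to characterize that property through the triple of $\xi$. From $\Psi(r)=\kappa(\theta+ir)-\kappa(\theta)$ and the explicit form of $\kappa$ in Theorem~\ref{thm:main}, a direct expansion shows that $\xi$ has Gaussian coefficient $\sigma^2$, L\'evy measure $\Upsilon(\dd y)=\int_{\mathcal P_\theta}\sum_{j\ge1}\e^{\theta x_j}\delta_{x_j}(\dd y)\,\Lambda(\dd\x)$ (the atoms at $-\infty$ contributing $0$), and a drift obtained from the linear term after accounting for the compensator $z x_1\mathbf 1_{|x_1|<1}$. A non-increasing L\'evy process is characterized by: no Gaussian part, L\'evy measure carried by $(-\infty,0)$, finite variation $\int(1\wedge|y|)\Upsilon(\dd y)<\infty$, and non-positive drift. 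Vanishing of the Gaussian part is exactly $\sigma^2=0$. Since $x_1$ is the largest atom, $\Upsilon$ is carried by $(-\infty,0)$ if and only if $x_1\le0$ for $\Lambda$-almost every $\x$, i.e. $\Lambda(\{x_1>0\})=0$; these two give the first condition. For finite variation I would split $\Upsilon$ according to $j=1$ and $j\ge2$: the $j\ge2$ contribution is dominated by $\int\sum_{k\ge2}\e^{\theta x_k}\Lambda(\dd\x)$, finite by \eqref{eqn:integrabilityPi2}, while the large negative jumps of the leading atom are controlled by $\Lambda(\{x_1<-1\})<\infty$, which follows from \eqref{eqn:integrabilityPi3}. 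Hence $\int(1\wedge|y|)\Upsilon(\dd y)<\infty$ reduces precisely to $\int_{\mathcal P_\theta}(1\wedge|x_1|)\Lambda(\dd\x)<\infty$, the second condition. Finally, identifying the drift of $\xi$ and requiring it to be non-positive yields the third condition, where the truncations $\mathbf 1_{|x_1|<1}$ and $\mathbf 1_{\{x_1>-1\}}$ coincide because $x_1\le0$ on the support of $\Lambda$.

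The main obstacle is the last translation step: correctly extracting the drift of $\xi$ from $\kappa$, which requires handling the compensator $z x_1\mathbf 1_{|x_1|<1}$ with care, and confirming that the finite-variation requirement genuinely collapses to the single condition on $x_1$. The simplification that makes this clean is that \eqref{eqn:integrabilityPi2} and \eqref{eqn:integrabilityPi3} already ensure that the secondary atoms ($j\ge2$) and the large negative jumps never obstruct finite variation, so that only the small jumps of the leading atom $x_1$ — exactly the quantities appearing in the second and third conditions — have to be examined. Both implications of the corollary follow from the same chain of equivalences, read in either direction.
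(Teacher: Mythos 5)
Your proposal is correct and follows essentially the same route as the paper: reduce via the many-to-one formula to the statement that $-\xi$ is a subordinator, then read the Gaussian coefficient, Lévy measure and drift of $\xi$ off the cumulant $\kappa$ and apply the classical characterization of subordinators. The only differences are in the level of detail (you spell out the indicator-function application of many-to-one and the $j=1$ versus $j\geq 2$ split of the Lévy measure, which the paper treats more tersely), not in substance.
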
 
\begin{proof} It follows from the many-to-one formula (Lemma \ref{lem:manytooneZ}) that ${\mathcal Z}$ is supported on $\R_-$ if and only if the L\'evy process $\xi$ with characteristic exponent $\Psi$ verifies $\xi_1\leq 0$ a.s., that is, if and only if $-\xi$ is a subordinator. Recall that $\Psi(r)= \kappa(\theta+ir)-\kappa(\theta)$, so \eqref{eq:kappa} shows that 
$$\Psi(r)= -\frac{\sigma^2}{2} r^2 + i a r + \int_{{\mathcal P}_{\theta}}\left( \sum_{i=1}^{\infty} (\e^{irx_i}-1)\e^{\theta x_i} - irx_1{\mathbf 1}_{|x_1|<1}\right)\Lambda(\dd \x).$$
 This enables us to identify the Gaussian coefficient of $\xi$ as $\sigma^2$, and its L\'evy measure, say $\lambda$, as satisfying for any measurable function $f: \R^*\to \R_+$:
$$\int_{\R^*} f(x) \lambda(\dd x)= \int_{{\mathcal P}_{\theta}} \sum_{i=1}^{\infty} f(x_i) \e^{\theta x_i} \Lambda(\dd \x).$$

The fact that $\lambda$ is zero on $(0,\infty)$ when $-\xi$ is a subordinator is thus equivalent to $\Lambda\left(\left\{\x\in{\mathcal P}: x_1>0\right\}\right)=0$. 
Further, the identity 
$$\int_{(-1,0)}|x| \lambda(\dd x) = \int_{{\mathcal P}_{\theta}}\sum_{i=1}^{\infty} \e^{\theta x_i}|x_i|{\mathbf 1}_{\{-1<x_i<0\}}\Lambda(\dd \x)< \infty,$$
and \eqref{eqn:integrabilityPi2} show that the condition $\int_{(-1,0)}|x| \lambda(\dd x)<\infty$ is then equivalent to $\int_{\mathcal P}(1\wedge |x_1|) \Lambda(\dd \x) <\infty$.
When the preceding two requirements hold, the drift coefficient of $\xi$ is given by 
$a+\int_{\mathcal P} x_1{\mathbf 1}_{\{x_1>-1\}}\Lambda(\dd \x)$, and the statement then follows from the characterization of subordinators in the larger class of L\'evy processes. 
\end{proof}

\paragraph*{Acknowledgements.} We thank Pascal Maillard for fruitful discussions, and two anonymous referees for their  careful reading and constructive comments.

\end{document}